\documentclass[10pt]{article}

\usepackage{url}
\usepackage{mathtools}
\usepackage{amssymb}
\usepackage{amsthm}
\usepackage{mathrsfs}
\usepackage{empheq}
\usepackage{latexsym}
\usepackage{enumitem}
\usepackage{eurosym}
\usepackage{dsfont}
\usepackage{appendix}
\usepackage{color} 
\usepackage[unicode]{hyperref}
\usepackage{frcursive}
\usepackage[utf8]{inputenc}
\usepackage[T1]{fontenc}
\usepackage{geometry}
\usepackage{multirow}
\usepackage{lmodern}
\usepackage{anyfontsize}
\usepackage{pgfplots}
\usepackage{upgreek}
\usepackage{comment}
\usepackage{cleveref}
\usepackage{array}
\usepackage{scalerel}

\usepackage[textwidth=0.75in]{todonotes}

\usepackage{natbib}
\bibliographystyle{myabbrvnat}
\setcitestyle{numbers,open={[},close={]},longnamesfirst}


\definecolor{red}{rgb}{0.7,0.15,0.15}
\definecolor{green}{rgb}{0,0.5,0}
\definecolor{blue}{rgb}{0,0,0.7}
\hypersetup{colorlinks, linkcolor={red},citecolor={green}, urlcolor={blue}}
			
\makeatletter \@addtoreset{equation}{section}

\newtheorem{theorem}{Theorem}[section]
\newtheorem{assumption}{Assumption}

\newtheorem*{assumption*}{Assumption}

\newtheorem*{corollary*}{Corollary}

\newtheorem{lemma}[theorem]{Lemma}
\newtheorem*{lemma*}{Lemma}
\newtheorem{proposition}[theorem]{Proposition}

\newtheorem{definition}[theorem]{Definition}
\newtheorem{remark}[theorem]{Remark}
\newtheorem*{theorem*}{Theorem}

\setlength\parindent{0pt}
\geometry{hmargin=0.75in,vmargin=1in}
\DeclareUnicodeCharacter{014D}{\=o}
\setcounter{secnumdepth}{4}


\def \E{\mathbb{E}}
\def \F{\mathbb{F}}
\def \G{\mathbb{G}}
\def \H{\mathbb{H}}
\def \I{\mathbb{I}}

\def \L{\mathbb{L}}
\def \M{\mathbb{M}}

\def \P{\mathbb{P}}

\def \R{\mathbb{R}}
\def \S{\mathbb{S}}

\def\Ac{{\cal A}}
\def\Bc{{\cal B}}
\def\Cc{{\cal C}}
\def\Dc{{\cal D}}

\def\Fc{{\cal F}}
\def\Gc{{\cal G}}
\def\Hc{{\cal H}}

\def\Jc{{\cal J}}

\def\Lc{{\cal L}}
\def\Mc{{\cal M}}
\def\Nc{{\cal N}}
\def\Oc{{\cal O}}
\def\Pc{{\cal P}}

\def\Sc{{\cal S}}
\def\Tc{{\cal T}}
\def\Uc{{\cal U}}
\def\Vc{{\cal V}}

\def\Xc{{\cal X}}
\def\Yc{{\cal Y}}
\def\Zc{{\cal Z}}

\newcommand\vc{
  \mathchoice
    {{\scriptstyle\mathcal{V}}}
    {{\scriptstyle\mathcal{V}}}
    {{\scriptscriptstyle\mathcal{V}}}
    {\scalebox{.7}{$\scriptscriptstyle\mathcal{V}$}}
  }

\DeclareMathAlphabet{\pazocal}{OMS}{zplm}{m}{n}

\def\Hf{{\mathfrak H}}

\def\Mf{{\mathfrak M}}

\def\Tf{{\mathfrak T}}
\def\Yf{{\mathfrak Y}}

\def\eps{\varepsilon}
\def\d{{\mathrm{d}}}
\def\1{\mathbf{1}}
\def\e{{\mathrm{e}}}

\def \timesr{{\rm \times}}

\DeclareMathOperator*{\sgn}{sgn}

\DeclareMathOperator*{\Prob}{Prob} 
\let\ae\relax

\def\as{{\rm -a.s.}}
\def\ae{{\rm -a.e.}}

\def\Tr{{\rm Tr }}

\renewcommand{\|}{\Vert}
\renewcommand{\t}{\top}

\def\Ho{\overline{\H}^{_{\raisebox{-1pt}{$ \scriptstyle 2,2$}}} }

\makeatletter
\def\moverlay{\mathpalette\mov@rlay}
\def\mov@rlay#1#2{\leavevmode\vtop{%
   \baselineskip\z@skip \lineskiplimit-\maxdimen
   \ialign{\hfil$\m@th#1##$\hfil\cr#2\crcr}}}
\newcommand{\charfusion}[3][\mathord]{
    #1{\ifx#1\mathop\vphantom{#2}\fi
        \mathpalette\mov@rlay{#2\cr#3}
      }
    \ifx#1\mathop\expandafter\displaylimits\fi}
\makeatother


\allowdisplaybreaks

\title{A unified approach to well--posedness of type--I backward stochastic Volterra integral equations\footnote{The authors would like to thank Tianxiao Wang and Yushi Hamaguchi for useful comments on a earlier version of this document. The authors gratefully acknowledge the support of the ANR project PACMAN ANR-16-CE05-0027, the PGIF project ``Massive entry of renewable energy in Chile: operation, storage and intermittency'' and the project ``Massive entry of renewable energy: operation, storage and intermittency'' funded by the ``Make Our Planet Great Again'' initiative of the Thomas Jefferson Fund. } }

\author{Camilo {\sc Hern\'andez} \footnote{Columbia University, IEOR department, USA, camilo.hernandez@columbia.edu. Author supported by the CKGSB fellowship.} \and Dylan {\sc Possama\"{i}} \footnote{Columbia University, IEOR department, USA, dp2917@columbia.edu}}
 
\date{\today}

\begin{document}
\maketitle
\begin{abstract}
We study a novel general class of multidimensional type--I backward stochastic Volterra integral equations. Toward this goal, we introduce an infinite dimensional system of standard backward SDEs and establish its well--posedness, and we show that it is equivalent to that of a type--I backward stochastic Volterra integral equation. We also establish a representation formula in terms of non--linear semilinear partial differential equation of Hamilton--Jacobi--Bellman type. As an application, we consider the study of time--inconsistent stochastic control from a game--theoretic point of view. We show the equivalence of two current approaches to this problem from both a probabilistic and an analytic point of view.

\medskip
\noindent{\bf Key words:} Backward stochastic Volterra integral equations, representation of partial differential equations, time inconsistency, consistent planning, equilibrium Hamilton--Jacobi--Bellman equation. 
\end{abstract}

\section{Introduction}\label{Section:introduction}

This paper is concerned with introducing a unified method to address the well--posedness of backward stochastic Volterra integral equations, BSVIEs for short. BSVIEs are regarded as natural extensions of backward stochastic differential equations, BSDEs for short. On a complete filtered probability space $(\Omega, \Gc, \G, \P)$, supporting an $n$--dimensional Brownian motion $B$, and denoting by $\G$ the $\P$--augmented natural filtration generated by $B$, one is given data, that is to say a $\Gc_T$--measurable random variable $\xi$, and a mapping $g$, referred to respectively as the \emph{terminal condition} and the \emph{generator}. A solution to a BSDE is a pair of $\G$--adapted processes $(Y_\cdot,Z_\cdot)$ such that
\begin{align}\label{Eq:BSDE}
Y_t = \xi +\int_t^T g_r(Y_r,Z_r)\d r -\int_t^T Z_r \d B_r,\; t\in [0,T],\; \P\as
\end{align} 
BSDEs of linear type were first introduced by \citet*{bismut1973analyse, bismut1973conjugate} as an adjoint equation in the Pontryagin stochastic maximum principle. Actually, the contemporary work of \citet*{davis1973dynamic}\footnote{Indeed, \cite{davis1973dynamic} was received for publication on October 27, 1971 and it is part of the bibliography of \cite{bismut1973conjugate}.} studied a precursor of a linear BSDE for characterising the value function and the optimal controls of stochastic control problems with drift control only. In the same context of the stochastic maximum principle, BSDEs of linear type are present in \citet*{arkin1979necessary}, \citet*{bensoussan1983maximum} and \citet*{kabanov1978on}. Remarkably, the extension to the non--linear case is due to \citet*{bismut1978controle}, as a type of Riccati equation, as well as \citet*{chitashvili1983martingale}, and \citet*{chitashvili1987optimal, chitashvili1987optimal2}. Later, the seminal work of \citet*{pardoux1990adapted} presented the first systematic treatment of BSDEs in the general nonlinear case, while the celebrated survey paper of \citet*{el1997backward} collected a wide range of properties of BSDEs and their applications to finance. Among such properties we recall the so--called \emph{flow property}, that is to say, for any $0\leq r\leq T$, 
\[ Y_t(T,\xi)=Y_t(r,Y_r(T,\xi)),\; t\in [0,r], \; \P\as, \text{ and } Z_t(T,\xi))=Z_t(r,Y_r(T,\xi)),\; \d t\otimes \d \P\ae\text{ on } [0,r]\times \Omega,\] where $(Y(T,\xi), Z(T,\xi))$ denotes the solution to the BSDE with terminal condition $\xi$ and final time horizon $T$.\medskip

A natural extension of \eqref{Eq:BSDE} arises by considering a collection of $\Gc_T$--measurable random variables $(\xi(t))_{t\in [0,T]}$, referred in the literature of BSVIEs as the \emph{free term}, as well as a generator $g$. In such a setting, a solution to a BSVIE is a pair $(Y_\cdot ,Z_\cdot^\cdot )$ of processes such that
\begin{align}\label{Eq:typeIIBSVIE}
Y_t=\xi(t)+\int_t^T g_r(t,Y_r, Z_r^t,Z_t^r)\d r -\int_t^T Z_r^t \d B_r, \; t\in [0,T],\; \P\as
\end{align}
Of noticeable interest is the case in which the term $Z_t^r$ is absent in the generator, i.e.
\begin{align}\label{Eq:typeIBSVIE}
Y_t=\xi(t)+\int_t^T g_r(t,Y_r,Z_r^t)\d r -\int_t^T Z_r^t \d B_r,\;  t\in [0,T],\; \P\as
\end{align}

Nowadays \eqref{Eq:typeIBSVIE} and \eqref{Eq:typeIIBSVIE} are referred in the literature as type--I and type--II BSVIEs, respectively. The first mention of such equations is, to the best of our knowledge, due to \citet*{hu1991adapted}. Indeed, in the context of well--posedness of BSDEs valued in a Hilbert space, a prototype of type--I BSVIEs \eqref{Eq:typeIBSVIE} is considered, see the comments following \cite[Remark 1.1]{hu1991adapted}. Two decades passed before a direct consideration of BSVIEs of the form given by \eqref{Eq:typeIBSVIE} was made by \citet*{lin2002adapted}, where the author studied the case $\xi(t)=\xi,$ $t\in [0,T]$, for a $\Gc_T$--measurable $\xi$. The general form of \eqref{Eq:typeIIBSVIE} was first addressed in \citet*{yong2006backward,yong2008well} in the context of optimal control of (forward) stochastic Volterra integral equations (FSVIEs, for short).\medskip

There are significant distinctions between BSDEs and BSVIEs. Nevertheless, a satisfactory concept of solution for such equations can be defined by extrapolating from the theory of BSDEs. In broad terms, a pair $(Y_\cdot , Z_\cdot^\cdot )$ is said to be a solution to a BSVIE, see \cite{yong2008well}, if for each $s\in [0,T)$, the mapping $t\longmapsto (Y_t, Z_t^s)$ is $\G$--adapted on $[s, T]$, $(Y,Z)$ is appropriately integrable and satisfies \eqref{Eq:typeIIBSVIE}. It is also worth pointing out the distinctions between type--I and type--II BSVIEs. As a consequence of the presence of $Z^s_t$ in the generator, to obtain a solution to a type--II BSVIE one has to determine $Z^s_t$ for $(t, s) \in [0, T]^2$, and \eqref{Eq:typeIIBSVIE} alone does not give enough restrictions. Indeed, \cite{yong2008well} showed that an adapted solution to the type--II BSVIE \eqref{Eq:typeIIBSVIE} may, in general, not be unique. This is in contrasts with type--I BSVIEs, where it suffices to determine $Z_t^s$ for $(t, s) \in [0, T]^2,$ $0\leq s\leq t\leq T$. Moreover, without additional assumptions, a solution to a general type--II BSVIE does not satisfy the flow property. \medskip

Since their introduction, BSVIEs have been extended to much more general frameworks than the one presented above. Hence, \citet*{wang2019bsvie} studies the case of random Lipschitz data; \citet*{wang2007nonlipschitz} and \citet*{shi2012solvability} deal with general non--Lipschitz data; \citet*{coulibaly2019backward} study time--delayed generators; \emph{mean--field} BSVIEs are considered in \citet*{shi2013meanfield}; \citet*{lu2016backward}, \citet*{hu2019linear} and \citet*{popier2020backward} studied extensions to general filtrations and the case where $B$ is replaced by more general processes; \citet*{djordjevic2015backward, djordjevic2013on} were interested in perturbed BSVIEs, i.e. when the coefficients depend additively on small perturbations; BSVIEs in Hilbert spaces have been investigated in \citet*{anh2006backward}, \citet*{ anh2011regularity}, and 
\citet*{ren2010solutions}; and an analysis of numerical schemes for BSVIEs has been proposed in \citet*{bender2013discretization}. There is also a wide spectrum of applications of BSVIEs. Hence, dynamic risk measures have been considered in \citet*{yong2007continuous}, \citet*{wang2010symmetrical,wang2013class}, \citet*{wang2018recursive} and \citet*{agram2019dynamic}. \citet*{kromer2017differentiability} also studied the question of dynamic capital allocations via BSVIEs. \citet*{wang2010maximum} dealt with a risk minimisation problem by means of the maximum principle for FBSVIEs, while the optimal control of SVIEs and BSVIEs via the maximum principle has been studied in \citet*{chen2007linear}, \citet*{wang2018linear}, \citet*{agram2017optimal,agram2018new}, \citet*{shi2015optimal}, \citet*{shi2019backward}, see also \citet*{wei2013optimal} for the case with state constraints.

\medskip
Since their first appearance, a natural and non--trivial question for BSVIEs has been that of the regularity in time of their solutions. The best known probabilistic results for general type--II BSVIEs guarantee the regularity of the solutions in an $\L^p$ sense only, see \citet*{wang2012Lp} and \citet*{ li2014existence}. Nevertheless, analytic results via a representation formula, guarantee the pathwise regularity of a solution to type--I BSVIEs, see \citet*{wang2019backward} and \citet*{wang2020path} for results regarding the representation of BSVIEs in the Markovian and non--Markovian framework, respectively. We also highlight that \citet*{overbeck2017path} surveyed path--dependent BSVIEs and their path regularity. In general, type--I BSVIEs are known to be much more amenable to the analysis, for example \cite{wang2007nonlipschitz} is able to establish the regularity of type--I BSVIEs by probabilistic methods. \medskip

\medskip

Out of the class of processes described by BSVIEs, a broader family than that of standard type--I BSVIEs \eqref{Eq:typeIBSVIE} is known to arise in the study of time--inconsistent control problems. Recently, \citet*{agram2020reflected} studied reflected backward stochastic Volterra integral equations and their relations to a time--inconsistent optimal stopping problem. Earlier connections were suggested in the concluding remarks of \citet*{wang2019backward}. Indeed, BSVIEs provide a probabilistic representations of the system of partial differential equation (PDE, for short) appearing in the study of time--inconsistent optimal control problems, e.g. see \citet*{yong2012time} and \citet*{wei2017time} for PDEs obtained via Pontryagin's and Bellman's principle, respectively. A natural link was then made rigorous independently by \citet*[Section 5]{wang2019time} and \citet*[Lemma A.2.3]{hernandez2020me}. Although following different approaches, their analyses lead to introduce type--I BSVIEs of the form 
\begin{align}\label{Eq:typeIBSVIEextended0}
Y_t=\xi(t)+\int_t^T g_r(t,Y_r, Z_r^t,Z_r^r)\d r -\int_t^T Z_r^t \d B_r,\; t\in [0,T], \; \P\as
\end{align}

These are {\rm BSVIE}s in which the diagonal of $Z$ appears in the generator. We highlight that, until the present work, the only well--posedness results in the literature for type--I BSVIEs \eqref{Eq:typeIBSVIEextended0} are available in \cite{wang2019time} and \cite{hernandez2020me}. Both results hold for the particular case in which the driver $g$ is linear in $Z_r^t$. Indeed, the argument in \cite{wang2019time} follows as a consequence of the representation formula, i.e. an analytic argument via PDEs, and holds in a Markovian setting. On the other hand, the probabilistic argument in \cite{hernandez2020me} holds in the non--Markovian case.\medskip

Likewise, \citet*{hamaguchi2020extended,hamaguchi2020small} studied a time--inconsistent control problem where the cost functional is defined by the $Y$ component of the solution of a type--I BSVIE \eqref{Eq:typeIBSVIE}, in which $g$ depends on a control. Via Pontryagin's optimal principle, the author noticed that the adjoint equations correspond to an \emph{extended} type--I BSVIE, as first introduced in \citet*{wang2020extended} in the context of generalising the celebrated Feynman--Kac formula. An extended type--I BSVIE consists of a pair $(Y^\cdot_\cdot,Z^\cdot_\cdot)$, with appropriate integrability, such that $s\longmapsto Y^s$ is continuous in an appropriate sense for $s\in [0,T]$, $Y^s_\cdot$ is pathwise continuous, $Z^s_\cdot$ is predictable, and 
\begin{align}\label{Eq:typeIBSVIEextended}
Y^s_t=\xi(t)+\int_t^T g_r(s,Y_r^s, Z_r^s,Y_r^r)\d r -\int_t^T Z_r^s \d B_r,\; (s,t) \in [0,T]^2,\; \P\as
\end{align}

We highlight that the noticeable feature of \eqref{Eq:typeIBSVIEextended0} and \eqref{Eq:typeIBSVIEextended} is the appearance of the `diagonal' processes $(Y_t^t)_{t\in[0,T]}$ and $(Z_t^t)_{t\in[0,T]}$, respectively. A prerequisite for rigorously introducing these processes is some regularity of the solution. Indeed, the regularity of $s\longmapsto (Y^s,Z^s)$ in combination with the pathwise continuity of $Y$ and the introduction of a derivative of $Z^s$ with respect to $s$, as first discussed in \cite{hernandez2020me}, make the analysis possible, see \Cref{Remark:wpsystem} for details. \medskip

Put succinctly, type--I BSVIEs, understood in a broader sense than that of \eqref{Eq:typeIBSVIE}, provide a rich framework to address new classes of problems in mathematical finance and control. In the case of time--inconsistent control problems, \eqref{Eq:typeIBSVIEextended0} and \eqref{Eq:typeIBSVIEextended} appear as a consequence of the study of such problems via Bellman's and Pontryagin's principles, respectively. Consequently, in this paper we want to build upon the strategy devised in \cite{hernandez2020me} and address the well--posedness of a general and novel class of type--I BSVIEs. We let $X$ be the solution to a drift--less stochastic differential equation (SDE, for short) under a probability measure $\P$, and $\F$ be the $\P$--augmentation of the filtration generated by $X$, see \Cref{Section:stochasticbasis} for details, and consider a tuple $(Y_\cdot^\cdot, Z_\cdot^\cdot, N_\cdot^\cdot)$, of appropriately $\F$ adapted processes, satisfying
\begin{align}\label{Eq:typeIBSVIEfe}
Y^s_t=\xi(t)+\int_t^T g_r(s,X,Y_r^s, Z_r^s,Y_r^r,Z_r^r)\d r -\int_t^T Z_r^s \d X_r- \int_t^T \d N_r^s,\; (s,t) \in [0,T]^2,\; \P\as
\end{align}

We remark that the additional process $N$ corresponds to a martingale process which is $\P$--orthogonal to $X$. This is a consequence of the fact that we work with a general filtration $\F$. To the best of our knowledge, a theory for type--I BSVIEs, as general as the ones introduced above, remains absent in the literature. Moreover, such class of type--I BSVIEs has only been mentioned in \cite[Remark 3.8]{hamaguchi2020extended} as an interesting generalisation of \eqref{Eq:typeIBSVIEextended}.\medskip

Our approach is based on the following class of infinite dimensional systems of BSDEs, supposed to hold $\P\as$
\begin{align*} 
\begin{split}
\Yc_t&=\xi(T)+\int_t^T h_r(X,\Yc_r,\Zc_r, Y_r^r,Z_r^r)\d r-\int_t^T  \Zc_r \d X_r- \int_t^T \d \Nc_r^s ,\; t \in [0,T],  \\
Y_t^s&=   \eta (s)+\int_t^T  g_r(s,X,Y_r^s,Z_r^s, \Yc_r,\Zc_r) \d r-\int_t^T Z_r^s  \d X_r - \int_t^T \d N_r^s,\; (s,t)\in[0,T]^2.
\end{split}
\end{align*}
where $(\Yc,\Zc,\Nc,Y,Z,N)$ are unknown, and required to have appropriate integrability, see \Cref{Section:infdimsystem} and \Cref{Eq:systemBSDE}.\medskip

We first establish the well--posedness of \eqref{Eq:systemBSDE}, see \Cref{Thm:wp}. For this it is important to be able to identify the proper spaces to carry out the analyses, see \Cref{Remark:wpsystem}. Moreover, we show that, for an appropriate choice of data for \eqref{Eq:systemBSDE}, its well--posedness is equivalent to that of the type--I BSVIE \eqref{Eq:typeIBSVIEfe}, see \Cref{Thm:wpbsvie}. Noticeably, our approach can naturally be specialised to obtain the well--posedness of \eqref{Eq:typeIBSVIE}, \eqref{Eq:typeIBSVIEextended0} and \eqref{Eq:typeIBSVIEextended} in the classic spaces, see \Cref{Remark:wpBSVIE}. Moreover, as our results provide an alternative approach to BSVIEs, it may allow for the future design of new numerical schemes to solve type--I BSVIEs, which to the best of our knowledge, remain limited to \cite{bender2013discretization}. In addition, we recover classical results for this general class of multidimensional type--I BSVIEs. We provide \emph{a priori} estimates, show the stability of solutions as well as a representation formula in terms of a semilinear PDEs, see \Cref{Thm:rep}. Given our multidimensional setting, we refrained from considering comparison results, see \citet*{wang2015comparison} for the one--dimensional case.\medskip

As an application of our results, we consider the game--theoretic approach to time--inconsistent stochastic control problems. We recall this approach studies the problem faced by the, so--called, \emph{sophisticated agent} who aware of the inconsistency of its preferences seeks for \emph{consistent plans}, i.e. equilibria. We show that as a consequence of \Cref{Thm:wpbsvie}, one can reconcile two recent probabilistic approaches to this problem. Moreover, we provide, see \Cref{Thm:eqvpds}, an equivalent result for two earlier analytic approaches, based on semi--linear PDEs. We believe this helps to elucidate connections between the different takes on the problem available in the literature.
\medskip

The rest of the paper is structured as follows. \Cref{Sec:Pstatement} introduces the stochastic basis on a canonical space as well as the integrability spaces necessary to our analysis. \Cref{Section:infdimsystem} precisely formulates the class of infinite dimensional systems of BSDEs \eqref{Eq:systemBSDE}, which is the crux of our approach, and provides the statement of its well--posedness, while the proof is deferred to \Cref{Section:Analysis}. \Cref{Section:BSVIE} introduces the class of type--I BSVIEs which are the main object of this paper, and establishes the equivalence of its well--posedness with that of \eqref{Eq:systemBSDE} for a particular choice of data. \Cref{Section:PDEsandtimeinconsistency} deals with the representation formula for the class of type--I BSVIEs considered, and presents the application of our results in the context of time--inconsistent stochastic control. Finally, \Cref{Section:Analysis} includes the analysis of \eqref{Eq:systemBSDE}.

\section{Preliminaries}\label{Sec:Pstatement}

{\bf Notations:}
we fix a time horizon $T>0$. Given $(E,\|\cdot \|)$ a Banach space, a positive integer $d$, and a non--negative integer $q$, $\Cc^{d}_{q}(E)$ (resp. $\Cc_{q,b}^d(E)$) will denote the space of functions from $E$ to $\R^{d}$ which are $q$ times continuously differentiable (resp. and bounded with bounded derivatives). When $d=1$ we write $\Cc_{q}(E)$ and $\Cc_{q,b}(E)$. For $\phi \in \Cc_{0,q}([0,T]\times E)$ with $q\geq 2$,
if $s \longmapsto \phi(s,\alpha)$ is uniformly continuous uniformly in $\alpha$, we denote by $\rho_{\phi }:[0,T]\longrightarrow \R$ its modulus of continuity. $\partial_\alpha \phi$ and $\partial_{\alpha \alpha}^2 \phi$ denote the gradient and Hessian with respect to $\alpha$, respectively. For $(u,v) \in (\R^p)^2$, $u\cdot v$ will denote their usual inner product, and $|u|$ the corresponding norm. For positive integers $m$ and $n$, we denote by $M_{m,n}(\R)$ the space of $m\times n$ matrices with real entries, and set $M_n(\R):=M_{n,n}(\R)$. For $M\in M_{m,n}(\R)$, $M_{:i}$ and $M_{i:}$ denote the i--th column and row. $\S_n^+(\R)$ denotes the set of $n\times n$ symmetric positive semi--definite matrices, while $\Tr [M]$ denotes the trace of $M\in  M_{m}(\R)$, and $| M|:=\sqrt{\Tr[M^\t M]}$ for $M\in M_{m,n}(\R)$.

\medskip
For $(\Omega, \Fc)$ a measurable space, $\Prob(\Omega)$ denotes the collection of probability measures on $(\Omega, \Fc)$. For a filtration $\F:=(\Fc_t)_{t\in [0,T]}$ on $(\Omega, \Fc)$, $\Pc_{\rm pred}(E,\F)$ (resp. $\Pc_{\rm prog}(E,\F)$, $\Pc_{\rm opt}(E,\F)$, $\Pc_{\rm meas}(E,\F)$) denotes the set of $E$--valued, $\F$--predictable processes (resp. $\F$--progressively measurable processes, $\F$--optional processes, $\F$--adapted and measurable). For $\P\in \Prob(\Omega)$, $\F^\P:=(\Fc_t^\P)_{t\in[0,T]},$ denotes the $\P$--augmentation of $\F$, where for $t\in [0,T]$, $\Fc^\P_t:=\Fc_t\vee \sigma(N^\P)$, where $N^\P:=\{N\subseteq \Omega: \exists B \in \Fc, N \subseteq B \text{ and } \P[B]=0\}$. With this, $\P\in \Prob(\Omega)$ can be extended so that $(\Omega,\Fc, \F^\P,\P)$ becomes a complete probability space, see \citet*[Chapter II.7]{karatzas1991brownian}. $\F^\P_+$ denotes the right limit of $\F^\P$, i.e. $\Fc_{t+}^\P:=\bigcap_{\eps>0} \Fc_{t+\eps}^\P$, $t\in[0,T)$, and $\Fc_{T+}^\P:=\Fc_T^\P$, so that $\F^{\P}_+$ is the minimal filtration that contains $\F$ and satisfies the usual conditions. For $\{s,t\}\subseteq [0,T]$, with $s\leq t$, $\Tc_{s,t}(\F)$ denotes the collection of $[t,T]$--valued $\F$--stopping times.\medskip

For $\P\in\Prob(\Omega)$, $Z\in \Pc_{{\rm pred}}(E,\F)$ and $X$ an $(E,\F,\P)$--semi--martingale, we set $Z\bullet X:=(Z\bullet X_t)_{t\in [0,T]}$, where $Z\bullet X_t:=\int_0^t Z_r   \d X_r$, $t\in[0,T]$. \medskip

\subsection{The stochastic basis on the canonical space}\label{Section:stochasticbasis}

We fix two positive integers $n$ and $m$, which represent respectively the dimension of the martingale which will drive our equations, and the dimension of the Brownian motion appearing in the dynamics of the former. We consider the canonical space $\Xc:=\Cc([0,T],\R^{n})$, with canonical process $X$. We let $\Fc$ be the Borel $\sigma$--algebra on $\Xc$ (for the topology of uniform convergence), and we denote by $\F^o:=(\Fc^o_t)_{t\in[0,T]}$ the natural filtration of $X$. We fix a bounded Borel measurable map $\sigma:[0,T]\times\Xc \longrightarrow \R^{n\times m}$, $\sigma_\cdot(X)\in \Pc_{{\rm meas}}(\R^{n\times m},\F^o)$, and an initial condition $x_0\in\R^n$. We assume there is $\P\in \Prob (\Xc)$ such that $\P[X_0=x_0]=1$ and $X$ is martingale, whose quadratic variation, $\langle X\rangle=(\langle X\rangle_t)_{t\in [0,T]}$, is absolutely continuous with respect to Lebesgue measure, with density given by $\sigma\sigma^\t$. Enlarging the original probability space, see \citet*[Theorem 4.5.2]{stroock1997multidimensional}, there is an $\R^{m}$--valued Brownian motion $B$ with
\[
X_t=x_0+\int_0^t\sigma_r(X_{\cdot \wedge r}) \mathrm{d}B_r,\; t\in[0,T],\; \P\as
\]
We now let $\F:=(\Fc_t)_{t\in[0,T]}$ be the (right--limit) of the $\P$--augmentation of $\F^o$. We stress that we will not assume $\P$ is unique. In particular, the predictable martingale representation property for $(\F,\P)$--martingales in terms of stochastic integrals with respect to $X$ might not hold. 
\begin{remark}
We remark that the previous formulation on the canonical is by no means necessary. Indeed, any probability space supporting a Brownian motion $B$ and a process $X$ satisfying the previous {\rm SDE} will do, and this can be found whenever that equation has a weak solution.
\end{remark}

\subsection{Functional spaces and norms}\label{Section:spacesandnorms}

We now introduce our spaces. In the following, $(\Omega,\Fc, \F,\P)$ denotes the filtered probability space as defined in \Cref{Section:stochasticbasis}. We are given a non--negative number $c$ and $(E,|\cdot |)$ a finite--dimensional Euclidean space, i.e. $E=\R^{k}$ for some non--negative integer $k$ and $|\cdot |$ denotes the euclidean norm. For any $(p,q) \in (1,\infty)^2$, we introduce the spaces

\begin{list}{\labelitemi}{\leftmargin=1em}
\item $\Lc^p(E)$ of $\Fc$--measurable, $E$--valued random variables $\xi$, with $ \| \xi\|_{\Lc^p}^p:= \E^\P[ |\xi|^p]<\infty$;
\item  $\S^p(E)$ of $Y\in \Pc_{\text{opt}}(E,\F)$, with $\P\as$ c\`adl\`ag paths on $[0,T]$, with $ \|Y\|_{\S^p}^p:=\E^\P\Big[ \sup_{t\in[0,T]} |Y_t|^p\Big]<\infty$;

\item  $\L^{q,p}(E)$ of $Y\in \Pc_{\text{opt}}(E,\F)$, with $ \|Y\|_{\L^{q,p}}^p:= \E^\P\bigg[ \bigg( \displaystyle\int_0^T |Y_r|^q\d r \bigg)^{\frac{p}q}\bigg]<\infty$;

\item $\H^{p}(E)$ of $Z \in  \Pc_{\text{\rm pred}}(\E,\F)$, which are defined $\sigma\sigma^\t_t \d t \ae$, with $  \|Z\|_{\H^{p}}^p := \E^\P\Big[ \big( \Tr[ \langle Z \bullet X \rangle_T ] \big)^{\frac{p}{2}}  \Big]  <\infty $;

\item $\M^p(E)$ of martingales $M\in \Pc_{\text{\rm opt}}(E,\F)$ which are $\P$--orthogonal to $X$ (that is the product $XM$ is an $(\F,\P)$--martingale), with $\P\as$ c\`adl\`ag paths, $M_0=0$ and $\|M\|^p_{\M^p}:=\E^\P\Big[ [ M]^{\frac{p}{2}}_T\Big]<\infty$;

\item $\Lc^{p,2}(E)$ denotes the space of families $(\xi(s))_{s\in [0,T]}$ of $\Fc$--measurable $E$--valued random variables such that the mapping $([0,T]\times \Omega, \Bc([0,T])\otimes \Fc^X_T)\longrightarrow (\Lc^p(E),\| \cdot \|_{\Lc^p}):s\longmapsto \xi(s)$ is continuous, $   \| \xi\|_{\Lc^{p,2}}^p:= \sup_{s\in[0,T]} \|\xi\|^p_{\Lc^{p}}<\infty$;

\item $\Pc^2_{\text{meas}}(E,\Fc)$ of two parameter processes $(U_\uptau)_{\uptau \in [0,T]^2 }$ $:([0,T]^2\times \Omega, \Bc([0,T]^2)\otimes \Gc)  \longrightarrow (\Bc(E),E)$ measurable.\medskip

Finally, given an arbitrary integrability space $(\I^p(E),\|\cdot\|_{\I})$, we introduce the space

\item $\I^{p,2}(E)$ of $(U_\uptau)_{\uptau \in [0,T]^2 }\in \Pc^2_{\text{meas}}(E,\Fc_T)$ such that the mapping $([0,T],\Bc([0,T])) \longrightarrow (\I^{p}(E),\|\cdot \|_{ \I^{p}}): s \longmapsto U^s $ is continuous and $\| U\|_{\I^{p,2}}^p:= \sup_{s\in[0,T]} \| U^s\|_{\I^{p}}^p <\infty$.

For example, $\H^{p,2}(E)$ denotes the space of $(Z_\uptau)_{\uptau \in [0,T]^2 }\in \Pc^2_{\text{meas}}(E,\Fc_T)$ such that the mapping $([0,T],\Bc([0,T])) \longrightarrow (\H^{p}(E),\|\cdot \|_{ \H^{p}}): s \longmapsto Z^s $ is continuous and $\| Z\|_{\H^{p,2}}^p:= \sup_{s\in[0,T]} \| Z^s\|_{\H^{p}}^p <\infty $.

\item $\overline{\H}^{_{\raisebox{-1pt}{$ \scriptstyle p,2$}}}(E)$ of $(Z_\uptau)_{\uptau \in [0,T]^2 }\in \Pc^2_{\text{meas}}(E,\Fc_T)$ such that $Z\in \H^{p,2}(E)$, $\Zc\in \H^2(E)$ where $\Zc:=(Z_t^t)_{t\in [0,T]}$ and 
\[ \|Z\|_{\overline{\H}^{_{\raisebox{-2pt}{$ \scriptstyle p,2$}}}}^2:=\|Z\|_{\H^{2,2}}^2+\|\Zc\|_{\H^2}^2<\infty \]

\end{list}

\begin{remark}
When $p=q$, we will write $\L^{p}(E)$ $\big($resp. $\L^{p,2}(E)\big)$ for $\L^{q,p}(E)$ $\big($resp. $\L^{q,p,2}(E)\big)$. With this convention, $\L^{2}(E)$ $\big($resp. $\L^{2,2}(E)\big)$ will be $\L^{2,2}(E)$ $\big($resp. $\L^{2,2,2}(E)\big)$. Also, $\S^{p,2}(E)$, $\L^{q,p,2}(E)$ and $\H^{p,2}(E)$ are Banach spaces. In addition, we remark that the space $\overline{\H}^{_{\raisebox{-1pt}{$ \scriptstyle p,2$}}}(E)$ is the largest subspace of $\H^{p,2}(E)$ for which the diagonal process $\Zc$ is in $\H^2(E)$. It is also clear that $\overline{\H}^{_{\raisebox{-1pt}{$ \scriptstyle p,2$}}}(E)$ is a Banach space.
\end{remark}

\section{An infinite dimensional system of BSDEs}\label{Section:infdimsystem}

We are given jointly measurable mappings $h$, $g$, $\xi$ and $\eta$, such that for any $(y,z,u,v,{\rm u})\in \R^{d_1\!}\times\R^{n\times d_1 \!}\times\R^{d_2\!}\times\R^{n\times d_2 \!}\times\R^{d_2} $
\begin{align*} 
\begin{split}
 &h:  [0,T] \times \Xc\times \R^{d_1}\! \times \R^{ n\times d_1}\! \times \R^{d_2}\!\times \R^{ n\times d_2}\! \times \R^{d_2}\! \longrightarrow\R^{d_1} ,\;   h_\cdot(\cdot ,y,z,u,v,{\rm u} )\in \Pc_{{\rm prog}}(\R^{d_1},\F),\\
&{g}: [0,T]^2 \times \Xc\times \R^{d_2}\! \times \R^{n\times d_2 }\! \times \R^{d_1}\!\times \R^{n\times d_1}\! \longrightarrow \R^{d_2} ,\;   g_\cdot(s,\cdot ,u,v,y,z)\in \Pc_{{\rm prog}}(\R^{d_2},\F),\\
 &\xi:  [0,T]\times \Xc\longrightarrow \R^{d_1}, \; \eta:[0,T]\times \Xc\longrightarrow \R^{d_2}.
 \end{split}
\end{align*}
Moreover, we work under the following set of assumptions.

\begin{assumption}\label{AssumptionA}

\begin{enumerate}[label=$(\roman*)$, ref=.$(\roman*)$,wide, labelwidth=!, labelindent=0pt]

\item  \label{AssumptionA:i} $(s,u,v) \longmapsto  g_t(s,x,u,v,y,z)$ $($resp. $s\longmapsto \eta(s,x))$ is continuously differentiable, uniformly in $(t,x,y,z)$ $($resp. in $x)$. Moreover, the mapping $\nabla g:[0,T]^2\times \Xc \times (\R^{d_2}\!\times \R^{n\times d_2 } )^2 \! \times \R^{d_1}\!\times \R^{n\times d_1}\! \longrightarrow \R^{d_2}$ defined by \vspace{-1em}

\[ \nabla g_t (s,x,{\rm u},{\rm v},u, v ,y,z):=\partial_s g_t(s,x,u,v,y,z)+\partial_u g_t(s,x,u,v,y,z){\rm u}+\sum_{i=1}^n \partial_{v_{:i}} g_t(s,x,u,v,y,z){\rm v}_{i:},\]\vspace{-1em}

satisfies $\nabla g _\cdot(s,\cdot,{\rm u},{\rm v},u, v ,y,z)\in \Pc_{\rm prog}(\R^{d_2},\F);$

\item \label{AssumptionA:ii}$(y,z,u,v,{\rm u} )\longmapsto  h_t(x,y,z,u,v,{\rm u})$ is uniformly Lipschitz continuous, i.e. $\exists L_h>0,$ such that for all $(t,x,y,\tilde y,$ $z,\tilde z,u,\tilde u,v,\tilde v,{\rm u}, \tilde {\rm u})$
\begin{align*}
 |h_t(x,y,z,u,v,{\rm u})-h_t(x,\tilde y,\tilde z,\tilde u,\tilde v,\tilde {\rm u})|\leq L_h\big(|y-\tilde y|+ |\sigma_t(x)^\t(z-\tilde z)|+|u-\tilde u|+ |\sigma_t(x)^\t(v-\tilde v)|+|{\rm u}-\tilde {\rm u}|\big);
\end{align*}

\item\label{AssumptionA:iii}  for $\varphi \in \{g, \partial_s g \}$, $(u,v,y,z)\longmapsto  \varphi_t(s,x,u,v,y,z)$ is uniformly Lipschitz continuous, i.e. $\exists L_{\varphi } > 0,$ such that for all $(s,t,x,u,\tilde u,v,\tilde v,y,\tilde y,z,\tilde z) $ 
\begin{align*}
 \ |\varphi_t(s,x,u,v,y,z)-\varphi_t(s,x,\tilde u,\tilde v,\tilde y,\tilde z)|\leq L_{\varphi }\big(|u-\tilde u|+|\sigma_t(x)^\t(v-\tilde v')|+|y-\tilde y|+|\sigma_t(x)^\t(z-\tilde z)|\big);
\end{align*}
\item \label{AssumptionA:iv} for ${\bf 0}:=(u,v,y,z)|_{(0,...,0)}$, $\big( \tilde h_\cdot ,  \tilde g_\cdot(s),   \nabla  \tilde g_\cdot(s)\big) :=\big( h_\cdot(\cdot,{\bf 0},0), g_\cdot(s,\cdot,{\bf 0}), \partial _s g_\cdot(s,\cdot,{\bf 0})\big) \in \L^{1,2}(\R^{d_1\!})\times \big(\L^{1,2,2}(\R^{d_2\!})\big)^2$. \label{AssumptionA:gen0}
\end{enumerate}
\end{assumption}
\begin{remark}\label{Remark:assumptionBSDE}
We comment on the set of requirements in {\rm \Cref{AssumptionA}}. Of particular interest is {\rm \Cref{AssumptionA}\ref{AssumptionA:i}}, the other being the standard Lipschitz assumptions on the generators as well as their integrability at zero. Anticipating the introduction of \eqref{Eq:systemBSDE} below and the discussion in {\rm \Cref{Remark:wpsystem}}, {\rm \Cref{AssumptionA}\ref{AssumptionA:i}} will allow us to identify the second {\rm BSDE} in the system as the antiderivative of the third one, see {\rm \Cref{Remark:wpsystem}}.
\end{remark}

Let us define the space $(\Hc,\|\cdot \|_{\Hc})$, whose generic elements we denote $\mathfrak{h}=(\Yc,\Zc,\Nc,U,V,M,\partial U, \partial V, \partial M)$, where
\begin{gather*}
 \Hc :=\S^2(\R^{d_1}\!) \times \H^2(\R^{n\timesr d_1}\!)\times  {\M^2}(\R^{d_1}\!)\times \S^{2,2}(\R^{d_2}\!) \times \Ho(\R^{n\timesr d_2 }\!) \times {\M^{2,2}}(\R^{d_2}\!)\times \S^{2,2}(\R^{d_2}\!) \times \H^{2,2}(\R^{n\timesr d_2 }\!) \times {\M^{2,2}}(\R^{d_2}\!),\\[0,3em]
 \|\mathfrak{h} \|^2_{\Hc} := \|\Yc\|_{\S^2}^2 + \|\Zc\|_{\H^2}^2+\|\Nc\|_{\M^2}^2+ \|U\|_{\S^{2,2}}^2 + \|V\|_{ \Ho}^2+\|M \|_{\M^{2,2}}^2+ \|\partial U\|_{\S^{2,2}}^2 + \|\partial V\|_{\H^{2,2}}^2+\|\partial M \|_{\M^{2,2}}^2,
\end{gather*}

We are now ready to precise the class of systems subject to our study. Given $(\xi,  \eta, \partial_s \eta )\in \Lc^2(\R^{d_1}\!)\times ( \Lc^{2,2} (\R^{d_2}\!))^2$, we consider the system, supposed to hold $\P\as$
\begin{align*}\label{Eq:systemBSDE}\tag{$\Sc$}
\begin{split}
\Yc_t&=\xi(T,X_{\cdot\wedge T})+\int_t^T h_r(X,\Yc_r,\Zc_r, U_r^r,V_r^r,\partial U_r^r )\d r-\int_t^T  \Zc_r^\t \d X_r-\int_t^T \d \Nc_r,\; t \in[0,T], \\
U_t^s&=   \eta (s,X_{\cdot\wedge T})+\int_t^T  g_r(s,X,U_r^s,V_r^s, \Yc_r, \Zc_r) \d r-\int_t^T {V_r^s}^\t  \d X_r-\int_t^T \d M^s_r,\; (s,t)\in[0,T]^2,  \\
\partial U_t^s&=  \partial_s \eta (s,X_{\cdot\wedge T})+\int_t^T  \nabla g_r(s,X,\partial U_r^s,\partial V_r^s,U_r^s,V_r^s, \Yc_r, \Zc_r) \d r-\int_t^T\partial  {V_r^s}^\t  \d X_r-\int_t^T \d \partial M^s_r,\; (s,t)\in[0,T]^2.
\end{split}
\end{align*}

\begin{definition}\label{Def:solsystem}
We say $\mathfrak{h}$ is a solution to \eqref{Eq:systemBSDE} if $\mathfrak{h}\in \Hc$ and \eqref{Eq:systemBSDE} holds. \end{definition}

\begin{remark}\label{Remark:wpsystem}
\begin{enumerate}[label=$(\roman*)$, ref=.$(\roman*)$,wide, labelwidth=!, labelindent=0pt]
We now expound on our choice for the set--up and the structure of \eqref{Eq:systemBSDE}.

\item We first highlight two aspects which are crucial to establish the connection between \eqref{Eq:systemBSDE} and {\rm type--I BSVIE} \eqref{Eq:typeIBSVIEfe}. The first is the presence of $\partial U$ in the generator of the first equation. This causes the system to be fully coupled but is nevertheless necessary in our methodology, this will be clear from the proof of {\rm \Cref{Thm:wpbsvie}} in {\rm \Cref{Section:BSVIE}}. The second relates to our choice to write three equations instead of two. In fact, our approach is based on being able to identify $\partial U$ as the derivative with respect to the $s$ variable of $U$ in an appropriate sense and, at least formally, it is clear that the third equation allows us to do so, see {\rm \Cref{Lemma:partialU}} for details. Alternatively, we could have chosen not to write the third equation and consider
\begin{align*}
\Yc_t&=\xi(T,X_{\cdot\wedge T})+\int_t^T h_r(X,\Yc_r,\Zc_r, U_r^r,V_r^r,\partial U_r^r )\d r-\int_t^T  \Zc_r^\t \d X_r-\int_t^T \d \Nc_r,\; t \in[0,T], \\
U_t^s&=   \eta (s,X_{\cdot\wedge T})+\int_t^T  g_r(s,X,U_r^s,V_r^s, \Yc_r, \Zc_r) \d r-\int_t^T {V_r^s}^\t  \d X_r-\int_t^T \d M^s_r,\; (s,t)\in[0,T]^2,  \\
\partial U^s_t& :=\frac{\d }{\d s} U^s_{|_{(s,t)}},\; (s,t)\in[0,T]^2,
\end{align*}
 
where $\frac{\d }{\d s} U^s$ corresponds to the density with respect to the Lebesgue measure of $s\longmapsto U^s$. Nevertheless, for the proof of well--posedness of \eqref{Eq:systemBSDE} that we present in {\rm \Cref{Section:Analysis}}, we have to derive appropriate estimates for $(\partial U_t^t)_{t\in [0,T]}$, and for this it is advantageous to do the identification by adding the third equation in \eqref{Eq:systemBSDE} and work on the space $(\Hc,\|\cdot \|_{\Hc})$.

\item \label{Remark:wpsystem:ii} We also emphasise that the presence of $(V_t^t)_{t\in[0,T]}$ in the generator of the first equation require us to reduce the space of the solution from the classic $(\Hf, \|\cdot \|_{\Hf})$ to $(\Hc,\|\cdot \|_{\Hc})$ where
\[\Hf :=\S^2(\R^{d_1}\!) \times \H^2(\R^{n\timesr d_1}\!)\times  {\M^2}(\R^{d_1}\!)\times \S^{2,2}(\R^{d_2}\!) \times \H^{2,2}(\R^{n\timesr d_2 }\!) \times {\M^{2,2}}(\R^{d_2}\!)\times \S^{2,2}(\R^{d_2}\!) \times \H^{2,2}(\R^{n\timesr d_2 }\!) \times {\M^{2,2}}(\R^{d_2}\!),\]
and $\|\cdot\|_{\Hf}$ denotes the norm induced by $\Hf$. Ultimately, this is due to the presence of $(Z_t^t)_{t\in [0,T]}$ in the {\rm type--I BSVIE} \eqref{Eq:typeIBSVIEfe}. On this matter, we stress that to the best of our knowledge, our results constitute the first comprehensive study of {\rm type--I BSVIEs} as general as \eqref{Eq:typeIBSVIEfe}. As such, part of our contributions is the identification of the appropriate space to carry out the analysis. In the case where $(V_t^t)_{t\in[0,T]}$ $($resp. $(Z_t^t)_{t\in [0,T]}\big)$ does not appear in the generator of the first {\rm BSDE} in \eqref{Eq:systemBSDE} $($resp. {\rm type--I BSVIE} \eqref{Eq:typeIBSVIEfe}$)$, {\rm \Cref{Prop:aprioriest:simplify}} and {\rm \Cref{Remark:contractionsimplify}} $($resp. {\rm \Cref{Remark:wpBSVIE}\ref{Remark:wpBSVIE:ii}}$)$ provide the arguments on how one can adapt our approach to yield a solution in the classical space. This shows that our methodology recovers existing results on {\rm type--I BSVIE} \eqref{Eq:typeIBSVIE} as well as the so--called {\rm extended type--I BSVIE} \eqref{Eq:typeIBSVIEextended}.

\item \label{Remark:wpsystem:iii} Having justified our choice of set--up for \eqref{Eq:systemBSDE}, we also note that one could have alternatively considered the space\footnote{$(\Hf^\Dc, \|\cdot\|_{\Hf})$ is a Banach space. Indeed, the space $(\Hf,\|\cdot \|_{\Hf})$ is clearly a Banach space, and the dominated convergence theorem guarantees that the space remains closed under the addition of \eqref{Eq:constraintspace}.} $(\Hf^\Dc, \|\cdot\|_{\Hf})$, of $\mathfrak{h}\in \Hf$ with $\|\mathfrak{h}\|_{\Hf}<\infty$,
such that
\begin{align*}\label{Eq:constraintspace}\tag{$\Dc$}
\bigg(\int_s^T \partial U^r \d r,\int_s^T \partial V^r \d r,\int_s^T \partial M^r \d r\bigg)=\big(U^T-U^s,V^T-V^s,M^T-M^s \big),\; \forall s\in [0,T],
\end{align*}
in $\S^2(\R^{d_2}\!) \times \H^2(\R^{n\timesr d_2}\!)\times  {\M^2}(\R^{d_2}\!)$. This is similar to the discussion in {\rm \cite[Section 2.1]{hamaguchi2020extended}}. The advantage of introducing \eqref{Eq:constraintspace} is that it ensures the existence of $(V_t^t)_{t\in [0,T]}\in \H^2(\R^{n\timesr d_2}\!)$, see {\rm \Cref{Lemma:partialU}}, implying $\Hc^\Dc\subseteq \Hc$. Consequently, this choice would reduce the generality of our well--posedness result. Fortunately, as a by--product of our approach, the solution to \eqref{Eq:systemBSDE} in $(\Hc,\|\cdot \|_{\Hc})$ happens to be in $(\Hf^\Dc,\|\cdot \|_{\Hf})$, see {\rm \Cref{Thm:solsatD}}.

\end{enumerate}
\end{remark}

\begin{remark}
In addition, we highlight two features of \eqref{Eq:systemBSDE} that will come into play in the setting of {\rm type--I BSVIE} \eqref{Eq:typeIBSVIEfe}, and differ from the one in the classic literature. They are related to the fact we work under the general filtration $\F$. The first is the fact that the stochastic integrals in \eqref{Eq:systemBSDE} are with respect to the canonical process $X$. Recall that $\sigma$ is not assumed to be invertible $($it is not even a square matrix in general and can vanish$)$, therefore the filtration generated by $X$ is different from the one generated by $B$. This yields more general results and it allows for extra flexibility necessary in some applications, see {\rm \cite{hernandez2020me}} for an example. The second difference is the presence of the processes $(N,M,\partial M)$. As it was mentioned in {\rm \Cref{Section:stochasticbasis}}, we work with a probability measure for which the martingale representation property for $\F$--local martingales in terms of stochastic integrals with respect to $X$ does not necessarily hold. Therefore, we need to allow for orthogonal martingales in the representation. Certainly, there are known properties which are equivalent to the orthogonal martingales vanishing, i.e. $N=M=\partial M=0$, for example when $\P$ is an extremal point of the convex hull of the probability measures that satisfy the properties in {\rm \Cref{Section:stochasticbasis}}, see {\rm \cite[Theorem 4.29]{jacod2003limit}}.
\end{remark}

\Cref{AssumptionA} provides an appropriate framework to derive the well--posedness of \eqref{Eq:systemBSDE}. The following is the main theorem of this section whose proof we postpone to \Cref{Section:Analysis}.

\begin{theorem}\label{Thm:wp}
Let {\rm \Cref{AssumptionA}} hold. Then \eqref{Eq:systemBSDE} admits a unique solution in $(\Hc,\|\cdot\|_{\Hc})$. For any $\mathfrak{h}\in \Hc$ solution to \eqref{Eq:systemBSDE} there exists $C>0$ such that
\begin{align*}
\|(\Yc,\Zc,\Nc,U, V,M,\partial U,\partial V,\partial M) \|^2_{{\Hc}} \leq C \Big(  \|\xi \|^2_{\Lc^2}  + \|\eta \|^2_{\Lc^{2,2}}+\| \partial_s \eta \|^2_{\Lc^{2,2}} +\|\tilde h\|_{\L^{1,2}}^2 +\| \tilde g \|_{\L^{1,2,2}}^2+\| \nabla \tilde g \|_{\L^{1,2,2}}^2 \Big).
\end{align*}
Moreover, if $\mathfrak{h}^i \in \Hc$ denotes the solution to \eqref{Eq:systemBSDE} with coefficients $(\xi^i,h^i,\eta^i, g^i,\partial_s \eta^i, \nabla g^i)$ for $i\in\{1,2\}$, then
\begin{align*}
\| \delta \mathfrak{h}  \|^2_{\Hc} \leq &\ C\Big(  \|\delta \xi\big\|^2_{\Lc^2} +\|\delta \eta \big\|^2_{\Lc^{2,2}}+\|\delta \partial_s \eta \big\|^2_{\Lc^{2,2}} +  \|\delta_1 h \|_{\L^{1,2}}^2+ \|\delta_1  g \|_{\L^{1,2,2}}^2+\|\delta_1  \nabla g \|_{\L^{1,2,2}} \Big),  
\end{align*}
where for $\varphi\in\{\Yc,\Zc,\Nc,U,V,M,\partial U,\partial V,\partial M,\xi,\eta,\partial_s \eta \}$ and $\Phi\in \{h,g,\nabla g\}$
\[
\delta \varphi:= \varphi^1-\varphi^2,\; \text{\rm and}\; \delta_1 \Phi_t:= \Phi^1_t( \Yc_r^1, \Zc^1_t,U^{1t}_t,V^{1t}_t)- \Phi^2_t( Y_r^1, Z^1_t,U^{1t}_t,V^{1t}_t),\; \d t\otimes \d \P \ae \text{ \rm on }[0,T]\times \Xc.
\]
\end{theorem}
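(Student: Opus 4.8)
The plan is to prove well--posedness of the system \eqref{Eq:systemBSDE} by a fixed--point argument on a suitably chosen space, after first establishing \emph{a priori} estimates for a slightly decoupled version of the equations. Concretely, I would proceed in the order: (i) solve the third (linear--in--$(\partial U,\partial V)$) equation for $\partial U^s$, given the data $(U,V,\Yc,\Zc)$ and the free terms, viewing it as a family (indexed by $s$) of standard Lipschitz BSDEs with orthogonal martingale part, using the representation/existence theory for BSDEs driven by $X$ under a general filtration (e.g. as in \cite{el1997backward} adapted to the orthogonal--martingale setting); (ii) solve the second equation for $(U^s,V^s,M^s)$, again a family of standard BSDEs; (iii) solve the first equation for $(\Yc,\Zc,\Nc)$, a single standard BSDE whose generator depends on the diagonal values $(U^r_r,V^r_r,\partial U^r_r)$; then (iv) close the loop with a contraction. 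The coupling is genuine — the first equation feeds on diagonal traces of the solutions of the second and third — so the contraction has to be run on the full tuple $\mathfrak h\in\Hc$ simultaneously, not sequentially.

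The key technical device, anticipated in \Cref{Remark:wpsystem} and to be justified in \Cref{Lemma:partialU}, is that the third equation is the ``$\partial_s$'' of the second: differentiating the second BSDE formally in $s$ and using the chain--rule definition of $\nabla g$ in \Cref{AssumptionA}\ref{AssumptionA:i} produces exactly the third BSDE, so that $\partial U^s = \partial_s U^s$ in the appropriate $\Lc^{2}$--sense, and in particular the map $s\longmapsto U^s$ is $\Cc^1$ with values in $\S^2$, which makes the diagonal $U^r_r$ and its trace well defined and continuous; the same regularity is what lets one bound $(\partial U^t_t)_{t}$ in $\H^2$, which is why the third equation is carried explicitly rather than only imposing the constraint \eqref{Eq:constraintspace}. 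I would first prove the \emph{a priori} estimate: take any solution $\mathfrak h\in\Hc$, apply Itô to $|\cdot|^2$ with an exponential weight $\e^{\beta r}$ on each of the three equations, use the Lipschitz bounds from \Cref{AssumptionA}\ref{AssumptionA:ii}--\ref{AssumptionA:iii} together with Young's inequality to absorb the cross terms, and use the integrability of the generators at zero from \Cref{AssumptionA}\ref{AssumptionA:iv}; the BDG inequality controls the $\S^2$ and $\M^2$ norms, and the orthogonal martingale parts are handled because $\langle Z\bullet X\rangle$ and $[\,\cdot\,]$ decouple. The $s$--uniformity of the $\L^{q,p,2}$/$\H^{p,2}$ norms follows because all constants in \Cref{AssumptionA} are uniform in $s$, and the continuity in $s$ (needed to land in $\Hc$) comes from the stability estimate applied to the $s$--increments of the free terms, which are continuous by assumption.

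For existence and uniqueness I would set up a Picard iteration: given $(\Yc,\Zc,\Nc)$, solve the decoupled equations (2) and (3) for $(U,V,M,\partial U,\partial V,\partial M)$, read off the diagonal traces $(U^r_r,V^r_r,\partial U^r_r)$, plug them into (1) and solve for a new $(\Yc,\Zc,\Nc)$; the stability estimate of \Cref{Thm:wp} (applied to the difference of two iterates, which kills all the free--term contributions) shows this map is a contraction on $\Hc$ for $\beta$ large enough, or equivalently on a small time interval which one then pastes together. The main obstacle — and the reason the space had to be shrunk from $\Hf$ to $\Hc$ in \Cref{Remark:wpsystem}\ref{Remark:wpsystem:ii} — is controlling the diagonal term $V^r_r$: for a generic element of $\H^{2,2}$ the diagonal need not even be defined, let alone lie in $\H^2$, so one must show that the solution map of equation (2), when restricted to the subspace $\overline{\H}^{_{\raisebox{-1pt}{$\scriptstyle 2,2$}}}$ on which $\Zc:=(V^t_t)_t\in\H^2$, maps back into that subspace with good estimates; this requires an extra \emph{a priori} bound on $(V^r_r)_r$ obtained by tracking the $s$--regularity of $V^s$ through the $\partial V$ equation (heuristically, $\partial_s V^s$ controls the oscillation of $V^s$ near the diagonal), and it is the technically delicate part of the argument. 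Once this diagonal estimate is in place the contraction closes on $\Hc$, giving both the well--posedness and, by specialising the stability estimate, the two displayed bounds in the statement.
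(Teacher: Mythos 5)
Your proposal follows essentially the same route as the paper: \emph{a priori} estimates via It\^o's formula with exponential weights, identification of the third equation as the $s$--derivative of the second (the content of \Cref{Lemma:partialU}, including the constraint \eqref{Eq:constraintspace} and the resulting control of the diagonal $(V_t^t)_{t\in[0,T]}$ through $\partial V$, which is indeed the delicate point), and a Picard iteration in which, given $(\Yc,\Zc)$, one solves the second and third equations, reads off the diagonal traces, and feeds them into the first. The only cosmetic discrepancies are the ordering of your steps (i)--(ii) (the second equation must of course be solved before the third, as your later description correctly has it) and the remark that the contraction must run ``simultaneously'' on the full tuple, whereas in the paper the fixed point is effectively tracked through $(y,z)$ alone; neither affects the argument.
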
 

The next result proves a fortuitous consequence of our set-up to study \eqref{Eq:systemBSDE}. In words, it says that although we carry out the study of \eqref{Eq:systemBSDE} under the general space $(\Hc,\|\cdot\|_{\Hc})$, the unique solution established by \Cref{Thm:wp} is in the space $(\Hf^\Dc,\|\cdot\|_{\Hf})$, see again {\rm \cite[Section 2.1]{hamaguchi2020extended}}.
\begin{proposition}\label{Thm:solsatD}
Let {\rm \Cref{AssumptionA}} hold, and let $\mathfrak{h}$ be the solution to \eqref{Eq:systemBSDE} in $(\Hc,\|\cdot\|_{\Hc})$. Then ${\mathfrak h} \in \Hf^\Dc$.
\begin{proof}
The result follows from \Cref{Lemma:partialU}.
\end{proof}
\end{proposition}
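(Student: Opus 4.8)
The content of the proposition is simply that the solution $\mathfrak{h}=(\Yc,\Zc,\Nc,U,V,M,\partial U,\partial V,\partial M)$ produced by \Cref{Thm:wp} satisfies the constraint \eqref{Eq:constraintspace}: it already lies in $\Hc\subseteq\Hf$, and any element of $\Hf$ obeying \eqref{Eq:constraintspace} lies, by definition, in $\Hf^{\Dc}$. The plan is therefore to reduce everything to \Cref{Lemma:partialU}, which identifies $(\partial U^s,\partial V^s,\partial M^s)$ as the strong $s$--derivative of $(U^s,V^s,M^s)$ in $\S^{2,2}(\R^{d_2})\times\H^{2,2}(\R^{n\timesr d_2})\times\M^{2,2}(\R^{d_2})$. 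Granting that identification, \eqref{Eq:constraintspace} is nothing but the fundamental theorem of calculus applied in these Banach spaces: integrating the $s$--derivative from $s$ to $T$ yields $U^T-U^s=\int_s^T\partial U^r\,\d r$ and likewise for $V$ and $M$, the integrals being well defined because $s\longmapsto(\partial U^s,\partial V^s,\partial M^s)$ is norm--continuous, which is part of membership in $\Hc$.

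To prove \Cref{Lemma:partialU} itself I would argue by difference quotients. Fix $s$ and $\eps\neq 0$, subtract the $U$--, $V$-- and $M$--equations of \eqref{Eq:systemBSDE} at levels $s+\eps$ and $s$, and divide by $\eps$; the resulting triple $\eps^{-1}\big(U^{s+\eps}-U^{s},V^{s+\eps}-V^{s},M^{s+\eps}-M^{s}\big)$ solves a linear BSDE driven by $X$ with an orthogonal martingale part, whose terminal datum is $\eps^{-1}(\eta(s+\eps,\cdot)-\eta(s,\cdot))$ and whose generator is the corresponding difference quotient of $g$. By \Cref{AssumptionA}\ref{AssumptionA:i} the terminal datum converges in $\Lc^{2,2}(\R^{d_2})$ to $\partial_s\eta(s,\cdot)$, while the generator difference quotient, expanded via the mean value theorem and controlled uniformly in the remaining variables through \Cref{AssumptionA}\ref{AssumptionA:i} and \ref{AssumptionA:iii}, converges to $\nabla g_r(s,X,\cdot,\cdot,U_r^s,V_r^s,\Yc_r,\Zc_r)$, which is exactly the generator of the third equation of \eqref{Eq:systemBSDE}. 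Applying the \emph{a priori} and stability estimates of \Cref{Thm:wp} — specialised to the linear equation satisfied by these difference quotients — shows that they form a Cauchy family as $\eps\to 0$ and converge in $\S^{2,2}\times\Ho\times\M^{2,2}$ to the unique solution $(\partial U^s,\partial V^s,\partial M^s)$ of that third equation, which identifies the limit. Passing to the limit in the trivial telescoping identity $\eps^{-1}(U^{s}-U^{s})=\cdots$, or directly invoking the Banach--space fundamental theorem of calculus, then gives \eqref{Eq:constraintspace}.

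The main obstacle is the fully coupled structure of \eqref{Eq:systemBSDE}: the diagonal terms $U_r^r$, $V_r^r$ and $\partial U_r^r$ enter the generator of the first equation, so $(\Yc,\Zc,\Nc)$ cannot be frozen when differentiating in $s$. The resolution is that the first equation is itself $s$--independent — its datum $\xi(T,X_{\cdot\wedge T})$ and its generator do not depend on $s$ — so the $s$--difference quotients of $(\Yc,\Zc,\Nc)$ vanish and only the $U,V,M$--blocks move; one must nevertheless track that the estimates of \Cref{Thm:wp} used above hold uniformly in $s$, which is precisely where the uniform--in--$(t,x,y,z)$ continuous differentiability imposed in \Cref{AssumptionA}\ref{AssumptionA:i} is needed. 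This uniform bookkeeping is the delicate point that \Cref{Lemma:partialU} is designed to isolate, and once it is in place the present proposition is immediate.
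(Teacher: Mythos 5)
Your reduction of the proposition to \Cref{Lemma:partialU} is exactly the paper's proof: membership in $\Hf^\Dc$ amounts to the constraint \eqref{Eq:constraintspace}, which is \Cref{Lemma:partialU}\ref{Lemma:partialU:iii}, and since $\Ho\subseteq\H^{2,2}$ the solution already lies in $\Hf$. Where you genuinely diverge is in how you would establish that lemma. The paper never differentiates the solution: it \emph{defines} $(\partial U^s,\partial V^s,\partial M^s)$ as the solution of the linear BSDE \eqref{Eq:BSDEderivative} with generator $\nabla g$ (well--posed by classical Lipschitz theory, since $({\rm u},{\rm v})\longmapsto\nabla g$ is affine), and then proves \eqref{Eq:constraintspace} in the reverse direction, integrating \eqref{Eq:BSDEderivative} in $s$ along Riemann sums $I^\ell$ over a well--chosen sequence of partitions, passing to the limit via a stability estimate, and identifying $\int_s^T\partial U^r\,\d r$ with $U^T-U^s$ by uniqueness of the BSDE solved by the increment. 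Your route instead forms the difference quotients $\eps^{-1}(U^{s+\eps}-U^s,V^{s+\eps}-V^s,M^{s+\eps}-M^s)$, shows they solve a linear BSDE with converging data, and concludes by stability plus the Banach--space fundamental theorem of calculus. Both are legitimate; yours delivers slightly more (genuine norm--differentiability of $s\longmapsto(U^s,V^s,M^s)$, not merely the integral identity), at the price of a mean--value expansion of the generator that must be written in integral--remainder form and handled by dominated convergence, which is precisely the bookkeeping the paper's ``integrate the candidate derivative and match by uniqueness'' argument avoids. Two small imprecisions: the derivative triple converges in $\S^{2,2}\times\H^{2,2}\times\M^{2,2}$ rather than $\S^{2,2}\times\Ho\times\M^{2,2}$ --- the $\Ho$ refinement concerns $V$ itself, whose diagonal is only recovered \emph{after} \eqref{Eq:constraintspace} is known, via \Cref{Lemma:partialU}\ref{Lemma:partialU:iv}; and the coupling through the first equation is immaterial here because \Cref{Lemma:partialU} takes $(\Yc,\Zc)$ as fixed exogenous data, so there is no $s$--dependence of $(\Yc,\Zc,\Nc)$ to track in the first place.
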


\begin{remark}\label{Rmk:quadratic}
The reader may wonder about our choice to leave out the diagonal of $\partial V$ in the generator of the first equation in \eqref{Eq:systemBSDE}. As we will argue below, this would require us to consider an auxiliary infinite dimensional family of quadratic {\rm BSDEs}. Since the main purpose of this paper is to relate the well--posedness of \eqref{Eq:systemBSDE} to that of the {\rm type--I BSVIE} \eqref{Eq:typeIBSVIEfe}, and inasmuch as we do not need to consider this case to establish {\rm \Cref{Thm:wpbsvie}}, we have refrained to pursue it in this document. Nevertheless, this case is covered as part of the study of the extension of \eqref{Eq:systemBSDE} to the quadratic case in {\rm \citet*{hernandez2020infinite}}. If we were to study the system
\begin{align*}
\begin{split}
\Yc_t&=\xi(T,X_{\cdot\wedge T})+\int_t^T h_r(X,\Yc_r,\Zc_r, U_r^r,V_r^r,\partial  U_r^r ,\partial V_r^r)\d r-\int_t^T  \Zc_r^\t  \d X_r-\int_t^T \d N_r,\; t\in[0,T],\\
U_t^s&=   \eta (s,X_{\cdot\wedge T})+\int_t^T  g_r(s,X,U_r^s,V_r^s, \Yc_r, \Zc_r) \d r-\int_t^T {V_r^s}^\t \d X_r-\int_t^T \d M^s_r,\; (s,t)\in[0,T]^2,\\
\partial U_t^s&=  \partial_s \eta (s,X_{\cdot\wedge T})+\int_t^T  \nabla g_r(s,X,\partial U_r^s,\partial V_r^s,U_r^s,V_r^s, \Yc_r, \Zc_r) \d r-\int_t^T\partial  {V_r^s}^\t   \d X_r-\int_t^T \d \partial M^s_r,\; (s,t)\in[0,T]^2,
\end{split}
\end{align*}

and as it is clear from our analysis in {\rm \Cref{Section:Analysis}}, its well--posedness requires both having a rigorous method to define the mapping $t\longmapsto \partial V_t^t$, as well as deriving a priori estimates for the norm of $\partial V_t^t$. In analogy with {\rm \Cref{Lemma:partialU}} and {\rm \Cref{Remark:wpsystem}}, both tasks require us to make sense of the family of {\rm BSDEs} with terminal condition $\partial_{ss}\eta$ and generator 
\begin{align*}
\nabla^2 g_t(s,x, {\rm \tilde u}, \tilde {\rm v}, {\rm u}, {\rm v},u, v ,y,z):= \nabla g_t(s,x, {\rm \tilde u}, \tilde {\rm v},u, v ,y,z) + \sum_{(\pi_i,\pi_j,\tilde \pi_i,\tilde \pi_j)\in \Pi^2\times \widetilde \Pi^2}  \tilde \pi_i^\t\partial_{\pi_i  \pi_j }^2 g_t(s,x,u, v ,y,z) \tilde \pi_j  ,
\end{align*}\vspace{-1em}

where $\Pi:=\big(s, u,v_{1:},..., v_{n:} \big)$, $\tilde \Pi:=\big(1,{\rm u},{\rm v}_{1:},...,{\rm v}_{n:} \big)$ and $ \partial_{\pi_i  \pi_j }^2 g_t(s,x,u, v ,y,z)$ denote the second order derivatives of $g$. Even though we could add assumptions ensuring that the second order derivatives are bounded, it is clear from the second term in the generator that we would necessarily need to consider a quadratic framework.
\end{remark}

\section{Well--posedness of type--I BSVIEs}\label{Section:BSVIE}

We now address the well--posedness of type--I BSVIEs. Let $d$ be a non--negative integer, and $f$ and $\xi$ be jointly measurable functionals such that for any $(s,y,z,u,v)\in [0,T]\times (\R^d \times \R^{n\times d})^2$
\begin{align*}
 & f: [0,T]^2\times \Xc\times (\R^d \times \R^{n\times d})^2 \longrightarrow \R^d ,\; f_\cdot(s,\cdot ,y,z,u,v )\in \Pc_{{\rm prog}}(\R^{d},\F),\\
& \xi: [0,T]\times \Xc\longrightarrow \R^d,\; \xi(s,\cdot) \; \text{\rm is}\; \Fc\text{\rm --measurable}.
\end{align*}

To derive the main result in this section, we will exploit the well--posedness of the infinite dimensional system of BSDE considered in \Cref{Section:infdimsystem}. Therefore, we work under the following set of assumptions.

\begin{assumption}\label{Assumption:SystemBSVIEwp}
\begin{enumerate}[label=$(\roman*)$, ref=.$(\roman*)$,wide, labelwidth=!, labelindent=0pt]
\item \label{Assumption:SystemBSVIEwp:i}$(s,y,z)\longmapsto  f_t(s,x,y,z,u,v)$ $($resp. $s\longmapsto \xi(s,x))$ is continuously differentiable, uniformly in $(t,x,u,v)$ $($resp. in $x)$. Moreover, the mapping $\nabla f:[0,T]^2\times \Xc \times (\R^{d}\times \R^{n\times d } )^3\longrightarrow \R^{d}$ defined by \vspace{-1em}

\[\nabla f_t(s,x,{\rm u}, {\rm v}, y,z ,u,v):=\partial_s f_t(s,x,y,z,u,v)+\partial_y f_t(s,x,y,z,u,v){\rm u}+\sum_{i=1}^n \partial_{z_{:i}} f_t(s,x,y,z,u,v){\rm v}_{i:},
\] \vspace{-1em}

satisfies $\nabla f_\cdot(s,\cdot,y,z,u,v,{\rm u}, {\rm v})\in \Pc_{\rm prog}(\R^d,\F)$ for all $s\in [0,T];$

\item \label{Assumption:SystemBSVIEwp:ii}$(y,z,u,v)\longmapsto  f_t(t,x,y,z,u,v)$ is uniformly Lipschitz continuous, i.e. $\exists L_h>0,$ such that for all $(t,x,y,\tilde y,z,\tilde z,$ $u,\tilde u,v,\tilde v)$
\begin{align*}
 |f_t(t,x,y,z,u,v)-f_t(t,x,\tilde y,\tilde z,\tilde u,\tilde v)|\leq L_h\big(|y-\tilde y|+ |\sigma_t(x)^\t(z-\tilde z)|+|u-\tilde u|+ |\sigma_t(x)^\t(v-\tilde v)|\big);
\end{align*}

\item \label{Assumption:SystemBSVIEwp:iii} for $\varphi \in \{f, \partial_s f\}$, $(u,v,y,z)\longmapsto  \varphi_t(s,x,y,z,u,v)$ is uniformly Lipschitz continuous, i.e. $\exists L_{\varphi } > 0,$ such that for all $(s,t,x,y,\tilde y,z,\tilde z,u,\tilde u,v,\tilde v)$ 
\begin{align*}
 \ |\varphi_t(s,x,y,z,u,v)-\varphi_t(s,x,\tilde y,\tilde z,\tilde u,\tilde v)|\leq L_{\varphi }\big(|y-\tilde y|+ |\sigma_t(x)^\t(z-\tilde z)|+|u-\tilde u|+ |\sigma_t(x)^\t(v-\tilde v)|\big).
\end{align*}

\item \label{Assumption:SystemBSVIEwp:iv} $\big( \tilde f_\cdot ,\tilde f_\cdot(s) ,  \nabla\tilde f_\cdot(s)\big) :=\big( f_\cdot(\cdot,\cdot ,{\bf 0}),f_\cdot(s,\cdot ,{\bf 0}), \partial_s f_\cdot(s,\cdot,{\bf 0})\big)   \in \L^{1,2}(\R^{d})\times \big(\L^{1,2,2}(\R^{d})\big)^2$.
\end{enumerate}
\end{assumption}

Let $(\Hc^\star, \|\cdot \|_{\Hc^\star})$ denote the space of $(Y,Z,N)\in \Hc^\star$ such that $\|(Y,Z,N)\|_{\Hc^\star}<\infty$ where
\begin{gather*}
 \Hc^\star:= \S^{2,2}(\R^d)\times \Ho(\R^{n\times d} ) \times \M^{2,2}(\R^d),\;
 \|\cdot\|_{\Hc^\star}:=\|Y\|_{\S^{2,2}}^2 + \|Z\|_{\Ho}^2+\|N \|_{\M^{2,2}}^2.
\end{gather*}

We consider the $n$--dimensional type--I BSVIE on $(\Hc^\star,\|\cdot\|_{\Hc^\star})$ given by
\begin{align}\label{Eq:bsvie}
Y_t^s =   \xi (s,X)+\int_t^T  f_r(s,X,Y_r^s,Z_r^s, Y_r^r, Z_r^r) \d r-\int_t^T {Z_r^s}^\t\d X_r-\int_t^T \d N^s_r,\; (s,t)\in[0,T]^2,\; \P\as
\end{align}

We work under the following notion of solution.

\begin{definition}\label{Def:soltypeIBSVIEfe}
We say $(Y,Z,N)$ is a solution to the {\rm type--I BSVIE} \eqref{Eq:bsvie} if $(Y,Z,N)\in \Hc^\star$ verifies \eqref{Eq:bsvie}.
\end{definition}

Defining $h_t(x,y,z,u,v, {\rm u} ):=f_t(t,x,y,z,u,v)-{\rm u}$, we may consider the system, supposed to hold $\P\as$
\begin{align}\label{Eq:systemBSDEf}\tag{$\Sc_f$}
\begin{split}
\Yc_t&=\xi(T,X)+\int_t^T h_r(X,\Yc_r,\Zc_r,Y_r^r,Z_r^r,\partial Y_r^r)\d r-\int_t^T \Zc_r^\t \d X_r-\int_t^T \d \Nc_r,\; t\in [0,T], \\
Y_t^s&=   \xi (s,X)+\int_t^T  f_r(s,X,Y_r^s,Z_r^s, \Yc_r, \Zc_r) \d r-\int_t^T {Z_r^s}^\t  \d X_r-\int_t^T \d N^s_r,\; (s,t)\in[0,T]^2,\\
\partial Y_t^s&=  \partial_s \xi (s,X)+\int_t^T  \nabla f_r(s,X, \partial Y_r^s,\partial Z_r^s ,Y_r^s,Z_r^s, \Yc_r, \Zc_r) \d r-\int_t^T\partial  {Z_r^s}^\t  \d X_r-\int_t^T \d \partial N^s_r,\; (s,t)\in[0,T]^2.
\end{split}
\end{align}

\begin{remark}
We now make a few comments regarding our set--up for the study {\rm type--I BSVIE} \eqref{Eq:bsvie}.
\begin{enumerate}[label=$(\roman*)$, ref=.$(\roman*)$,wide, labelwidth=!, labelindent=0pt]

\item The necessity of the set of assumptions in {\rm \Cref{Assumption:SystemBSVIEwp}} to our approach, based on the systems introduced in {\rm \Cref{Section:infdimsystem}}, is clear. Compared to the set of assumption made by recent works on {\rm BSVIEs} in the literature we notice the main difference is the regularity with respect to the $s$ variable we imposed on the data of the problem, i.e. {\rm \Cref{Assumption:SystemBSVIEwp}\ref{Assumption:SystemBSVIEwp:i}}. In particular, we highlight that {\rm type--I BSVIE} \eqref{Eq:typeIBSVIEextended}, in which the diagonal of $Y$, but not of $Z$ is allowed in the generator, had been considered in {\rm \cite{hamaguchi2020extended, wang2020extended}}. In such a scenario, the authors assumed $(\xi,f)\in \Lc^{2,2}(\R^d)\times \L^{1,2,2}(\R^d)$, and no additional condition is required to obtain the well--posedness of \eqref{Eq:typeIBSVIEextended}. As it will be clear from {\rm \Cref{Prop:aprioriest:simplify}} and {\rm \Cref{Remark:wpBSVIE}} our procedure can be adapted to work under such set of assumptions provided the diagonal of $Z$ is not considered in the generator.

\item Moreover, the spaces of the solution considered in {\rm \cite{hamaguchi2020extended, wang2020extended}} also differ, echoing the absence of the diagonal of $Z$ in the generator. The authors work with the notion of {\rm C--solution}, that is, $Y$ is assumed to be a jointly measurable process, such that $s\longmapsto Y^s$ is continuous in $\L^{1,p}(\R^d)$, $p\geq 2$, and for every $s\in [0,T]$, $Y^s$ is $\F$--adapted with $\P\as$ continuous paths. This coincides with our definition of the space $\L^{1,p,2}(\R^d)$. Similarly, $Z$ belongs to the space $\H^{2,2}(\R^{n\times d})$. On the other hand, {\rm \cite{wang2019time}} provides a representation formula for {\rm type--I BSVIEs} for which the driver allows for the diagonal of $Z$, but not of $Y$. More precisely, they introduce a {\rm PDE}, similar to the one we will introduce in {\rm \Cref{Section:PDEsandtimeinconsistency}}, prove its well--posedness, and then a Feynman--Kac formula. Naturally, in this case $(Y,Z)$ inherits the regularity of the underlying {\rm PDE}.

\item The main contributions of our methodology to the field of {\rm BSVIEs} is to be able to accommodate {\rm type--I BSVIEs} for which the diagonal of $Z$ appears in the generator. For this, the definition of the space $(\Hc^\star, \|\cdot \|_{\Hc^\star})$ and {\rm \Cref{Assumption:SystemBSVIEwp}\ref{Assumption:SystemBSVIEwp:i}} play a central role. As first noticed in {\rm \cite{hernandez2020me}}, under this assumption one can identify, see {\rm \Cref{Prop:aprioriestimatesdif}}, the unique process $\partial Z^s$ which can be understood as the derivative of $s\longmapsto Z^s$. We recall again this is in spirit of the recent argument in {\rm \cite[Section 2.1]{hamaguchi2020extended}} where assuming that $s\longmapsto Z^s$ is continuously differentiable, the author is able to argue the existence of a unique process $(Z_t^t)_{t\in[0,T]}$. We highlight that our approach has the additional advantage of being able to identify the dynamics of $(\partial Y,\partial Z)$. Moreover, {\rm \Cref{Assumption:SystemBSVIEwp}\ref{Assumption:SystemBSVIEwp:i}}, being an assumption on the data of the {\rm BSVIE}, is much easier to verify in practice as opposed to the regularity required in {\rm \cite{hamaguchi2020extended}}. Certainly, our results would still hold true if we require the differentiability of data $(\xi,f)$ with respect to the parameter $s$ in the $\Lc^2$, respectively $\L^{1,2}$, sense.

\item Lastly, we stress that the above {\rm type--I BSVIE} is defined for $(s,t)\in [0,T]^2$, as opposed to $0\leq s\leq t\leq T$. However, anticipating the result of {\rm \Cref{Thm:wpbsvie}}, this could be handled by first solving on $(s,t)\in [0,T]^2$ and then consider the restriction to $0\leq s\leq t\leq T$.
\end{enumerate}
\end{remark}

We are now in position to prove the main result of this paper. The next result shows that under the previous choice of data for \eqref{Eq:systemBSDEf}, its solution solves the {\rm type--I BSVIE} with data $(\xi,f)$ and {\it vice versa}.

\begin{theorem}\label{Thm:wpbsvie}
Let {\rm \Cref{Assumption:SystemBSVIEwp}} hold. Then
\begin{enumerate}[label=$(\roman*)$, ref=.$(\roman*)$,wide, labelwidth=!, labelindent=0pt]
\item the well--posedness of \eqref{Eq:systemBSDEf} is equivalent to that of the {\rm type--I BSVIE} \eqref{Eq:bsvie}$;$
\item the {\rm type--I BSVIE} \eqref{Eq:bsvie} is well--posed, and or any $(Y,Z,N)\in \Hc^\star$ solution to {\rm type--I BSVIE} \eqref{Eq:bsvie} there exists $C>0$ such that
\begin{align}\label{Eq:aprioriestBSVIE}
\|( Y,Z,N) \|_{\Hc^\star}\leq C \Big(   \|\xi \|^2_{\Lc^{2,2}}+  \|\partial_s \xi \|^2_{\Lc^{2,2}}  +\| \tilde f \|_{\L^{1,2,2}}^2 +\| \nabla \tilde f \|_{\L^{1,2,2}}^2 \Big).
\end{align}
Moreover, if $(Y^i,Z^i,N^i)\in \Hc^\star$ denotes the solution to {\rm type--I BSVIE} \eqref{Eq:bsvie} with data $(\xi^i,f^i)$ for $i\in \{1,2\}$, we have
\begin{align*}
\| (\delta Y,\delta Z,\delta N)  \|^2_{\Hc^\star} \leq &\ C\Big(  \|\delta \xi\big\|^2_{\Lc^2} +\|\delta \partial_s  \xi \big\|^2_{\Lc^{2,2}}+  \|\delta_1 f \|_{\L^{1,2}}^2+ \|\delta_1  \nabla f \|_{\L^{1,2,2}}^2 \Big).
\end{align*}

\end{enumerate}
\end{theorem}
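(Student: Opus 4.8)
The plan is to leverage \Cref{Thm:wp} directly, so the proof reduces to establishing a tight correspondence between solutions of \eqref{Eq:systemBSDEf} and solutions of the \text{type--I BSVIE} \eqref{Eq:bsvie}. First I would verify that the data $(h,f,\nabla f)$ obtained from $(\xi,f)$ satisfy \Cref{AssumptionA}: with $h_t(x,y,z,u,v,{\rm u}):=f_t(t,x,y,z,u,v)-{\rm u}$, the Lipschitz bound \Cref{AssumptionA}\ref{AssumptionA:ii} follows from \Cref{Assumption:SystemBSVIEwp}\ref{Assumption:SystemBSVIEwp:ii} with an extra $+|{\rm u}-\tilde{\rm u}|$ term, \Cref{AssumptionA}\ref{AssumptionA:i} and \ref{AssumptionA:iii} are inherited verbatim from \Cref{Assumption:SystemBSVIEwp}\ref{Assumption:SystemBSVIEwp:i} and \ref{Assumption:SystemBSVIEwp:iii} (here the roles of the generic $g$ and its $\nabla g$ are played by $f$ and $\nabla f$), and the integrability-at-zero condition \Cref{AssumptionA}\ref{AssumptionA:iv} becomes $(\tilde h,\tilde f(s),\nabla\tilde f(s))=(f_\cdot(\cdot,\cdot,{\bf 0}),f_\cdot(s,\cdot,{\bf 0}),\partial_s f_\cdot(s,\cdot,{\bf 0}))\in\L^{1,2}\times(\L^{1,2,2})^2$, which is exactly \Cref{Assumption:SystemBSVIEwp}\ref{Assumption:SystemBSVIEwp:iv}. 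Hence \Cref{Thm:wp} applies to \eqref{Eq:systemBSDEf} with the data $(\xi(T,X),\xi(s,X),\partial_s\xi(s,X))$ for the three terminal conditions, yielding a unique $\mathfrak h\in\Hc$ together with the stated \emph{a priori} and stability estimates.

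Next I would prove part $(i)$, the equivalence, in two directions. For the forward direction, given a solution $\mathfrak h=(\Yc,\Zc,\Nc,Y,Z,N,\partial Y,\partial Z,\partial N)\in\Hc$ of \eqref{Eq:systemBSDEf}, I claim $(Y,Z,N)\in\Hc^\star$ solves \eqref{Eq:bsvie}. The key point is to identify the diagonal terms $\Yc_r=Y_r^r$ and $\Zc_r=Z_r^r$ appearing implicitly in the first equation of \eqref{Eq:systemBSDEf}. By \Cref{Thm:solsatD} the solution lies in $\Hf^\Dc$, so \eqref{Eq:constraintspace} holds; combined with \Cref{Lemma:partialU}, which is where $\partial Y$ is rigorously identified as $\tfrac{\d}{\d s}Y^s$ and the existence of $(Z_t^t)_{t\in[0,T]}\in\H^2$ is obtained, one shows that the pair $(\Yc,\Zc)$ solving the first BSDE of \eqref{Eq:systemBSDEf} must coincide $\d t\otimes\d\P$--a.e. with the diagonal $(Y_t^t,Z_t^t)_{t\in[0,T]}$. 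Indeed, plugging $t\mapsto(Y_t^t,Z_t^t,N_t^t)$ into the second equation of \eqref{Eq:systemBSDEf} (evaluated along the diagonal $s=t$, which is legitimate thanks to the path regularity furnished by the spaces $\S^{2,2}$, $\Ho$, $\M^{2,2}$) gives a BSDE whose generator, after substituting $h_r(X,\Yc_r,\Zc_r,Y_r^r,Z_r^r,\partial Y_r^r)=f_r(r,X,\Yc_r,\Zc_r,Y_r^r,Z_r^r)-\partial Y_r^r$, matches that of the first equation up to the $\Yc,\Zc$ vs.\ $Y^r,Z^r$ substitution; uniqueness of solutions to standard Lipschitz BSDEs then forces $\Yc=Y^\cdot_\cdot|_{\rm diag}$ and $\Zc=Z^\cdot_\cdot|_{\rm diag}$, after which the second equation of \eqref{Eq:systemBSDEf} is literally \eqref{Eq:bsvie}. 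For the converse, given a solution $(Y,Z,N)\in\Hc^\star$ of \eqref{Eq:bsvie}, I would use \Cref{Assumption:SystemBSVIEwp}\ref{Assumption:SystemBSVIEwp:i} and the argument of \Cref{Prop:aprioriestimatesdif} to construct $\partial Y^s$ (and $\partial Z^s$, $\partial N^s$) as the derivative in $s$ of $(Y^s,Z^s,N^s)$, set $\Yc_t:=Y_t^t$, $\Zc_t:=Z_t^t$, $\Nc_t:=N_t^t$, and verify the three equations of \eqref{Eq:systemBSDEf} hold, i.e.\ $(\Yc,\Zc,\Nc,Y,Z,N,\partial Y,\partial Z,\partial N)\in\Hc$ is a solution. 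The third equation follows by differentiating the defining relation for $(Y^s,Z^s,N^s)$ under the integral, which is justified precisely by the modulus-of-continuity/differentiability bounds in \Cref{Assumption:SystemBSVIEwp}\ref{Assumption:SystemBSVIEwp:i}.

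Part $(ii)$ is then immediate: well--posedness of \eqref{Eq:bsvie} follows from part $(i)$ together with the well--posedness of \eqref{Eq:systemBSDEf} established via \Cref{Thm:wp}. For the \emph{a priori} estimate \eqref{Eq:aprioriestBSVIE}, I would observe that the $\Hc$--norm controls the $\Hc^\star$--norm of $(Y,Z,N)$ (since $\|Y\|_{\S^{2,2}}^2+\|Z\|_{\Ho}^2+\|N\|_{\M^{2,2}}^2$ is one block of $\|\mathfrak h\|_{\Hc}^2$, using that $\Ho$ is built from $\H^{2,2}$ plus the diagonal $\Zc=Z^\cdot|_{\rm diag}\in\H^2$ which is itself part of the $\Hc$-norm), and then specialise the estimate from \Cref{Thm:wp} to the present data: $\|\xi(T,X)\|_{\Lc^2}\le\|\xi\|_{\Lc^{2,2}}$, $\eta=\xi$, $\partial_s\eta=\partial_s\xi$, $\tilde h=\tilde f$ with $\|\tilde h\|_{\L^{1,2}}\le\|\tilde f\|_{\L^{1,2,2}}$, $\tilde g=\tilde f(s)$, $\nabla\tilde g=\nabla\tilde f(s)$. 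The stability estimate follows the same way from the stability part of \Cref{Thm:wp}, noting $\delta_1 h=\delta_1 f$ and $\delta_1\nabla g=\delta_1\nabla f$. \textbf{The main obstacle} I anticipate is the forward direction of the equivalence: rigorously justifying that one may evaluate the $s$-indexed family $(Y^s,Z^s,N^s)$ along the diagonal $s=t$ and that the resulting diagonal process genuinely satisfies a \emph{closed} BSDE whose generator coincides with that of the $\Yc$-equation—this is exactly the coupling subtlety flagged in \Cref{Remark:wpsystem}, and it is what makes the presence of the third ($\partial U$) equation and the space $\Ho$ indispensable. Everything else is bookkeeping with the norms and direct appeals to \Cref{Thm:wp}, \Cref{Thm:solsatD}, \Cref{Lemma:partialU} and \Cref{Prop:aprioriestimatesdif}.
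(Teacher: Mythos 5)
Your proposal follows essentially the same route as the paper: reduce everything to \Cref{Thm:wp} applied to \eqref{Eq:systemBSDEf}, identify the diagonal $(Y_t^t,Z_t^t)_{t\in[0,T]}$ with $(\Yc,\Zc)$ in one direction and construct $(\partial Y,\partial Z,\partial N)$ via \Cref{Lemma:partialU} in the other, then specialise the \emph{a priori} and stability estimates. Two points deserve tightening. First, the closed BSDE satisfied by the diagonal is not obtained by ``evaluating the second equation along $s=t$'': freezing $s=t$ leaves $Y_r^t$ for $r>t$ inside the integral, so the equation does not close, and path regularity alone does not fix this; the paper derives the diagonal dynamics in \Cref{Lemma:intdiagpartialU} by integrating the third ($\partial Y$) equation in $s$ and applying the deterministic and stochastic Fubini theorems together with \eqref{Eq:constraintspace}, which is exactly where the $-\partial Y_r^r$ correction you correctly anticipate comes from --- this lemma, which you never cite, is the workhorse of both implications. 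Second, in the converse direction the orthogonal-martingale part of the diagonal equation is not $(N_t^t)_{t\in[0,T]}$, which is not a martingale in general, but $\widetilde N_t:=N_t^t-\int_0^t\partial N_r^r\,\d r$; this correction, also supplied by \Cref{Lemma:intdiagpartialU}, is needed for your candidate tuple to lie in $\Hc$ (and, likewise, the construction of $\partial Y$ is the content of \Cref{Lemma:partialU}, not of \Cref{Prop:aprioriestimatesdif}).
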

\begin{proof} $(ii)$ is a consequence of $(i)$. Indeed, \eqref{Eq:aprioriestBSVIE} follows from \Cref{Prop:aprioriestimates}, and the well--posedness of {\rm type--I BSVIE} \eqref{Eq:bsvie} from that of \eqref{Eq:systemBSDEf}, which holds by \Cref{Assumption:SystemBSVIEwp} and \Cref{Thm:wp}. We now argue $(i)$.\medskip

Let $(\Yc,\Zc,\Nc,Y,Z,N,\partial Y,\partial Z,\partial N)\in \Hc$ be a solution to \eqref{Eq:systemBSDEf}. It then follows from \Cref{Lemma:intdiagpartialU} that
\begin{align}\label{Eq:claimwpbsvie}
Y_t^t&=\xi(T,X)+\int_t^T h_r(X,Y_r^r,Z_r^r,\Yc_r,\Zc_r,\partial Y_r^r)\d r-\int_t^T {Z_r^r}^\t \d X_r- \int_t^T \d \widetilde N_r, \ t\in [0,T],\; \P\as
\end{align}

where $\widetilde N_t := N_t^t-\int_0^t \partial N_r^r \d r$, $t\in[0,T]$, and $\widetilde N\in \M^2(\R^d)$. This shows that $\big((Y_t^t)_{t\in[0,T]},(Z_t^t)_{t\in[0,T]},\Yc_\cdot,\Zc_\cdot,  (\widetilde N_t)_{t\in [0,T]} \big)$, solves the first BSDE in \eqref{Eq:systemBSDEf}. It then follows from the well--posedness of \eqref{Eq:systemBSDEf}, which holds by \Cref{Assumption:SystemBSVIEwp} and \Cref{Thm:wp}, that $\big((Y_t^t)_{t\in[0,T]},(Z_t^t)_{t\in[0,T]}, (\widetilde N_t)_{t\in [0,T]}\big)=(\Yc_\cdot,\Zc_\cdot, \Nc_\cdot)$ in $\S^2(\R^d)\times \H^2(\R^{n\times d})\times\M^2(\R^d)$. Consequently
\begin{align*}
Y_t^s= \xi(s,X)+\int_t^T f_r(s,X,Y_r^s, Z_r^s, Y_r^r,Z_r^r)\d r-\int_t^T {Z_r^s}^\t \d X_r-\int_t^T \d N_r^s, \; (s,t)\in [0,T]^2,\;  \P\as
\end{align*}  

We are left to show the converse result. Let $(Y,Z,N)\in \Hc^\star$ be a solution to type--I BSVIE \eqref{Eq:bsvie}. We begin by noticing that the processes $\Yc:=(Y_t^t)_{t\in [0,T]},\Zc:=(Z_t^t)_{t\in [0,T]},\Nc:=(N_t^t)_{t\in [0,T]}$ are well--defined. Moreover, $\Zc\in \H^2(\R^{n\times d})$ follows from $Z\in \Ho(\R^{n\times d})$,  and $\Yc\in \L^{2,2}(\R^d)$ follows from
\begin{align*}
\|\Yc\|_{\L^2}^2 =\E\bigg[ \int_0^T |Y_r^r|^2 \d r\bigg]\leq \E \bigg[ \int_0^T \sup_{t\in [0,T]} |Y_t^r|^2\d r\bigg]  =\int_0^T \|Y^r\|_{\S^2} \d r<\infty.
\end{align*}

Then, since \Cref{Assumption:SystemBSVIEwp} holds and $(\Yc,\Zc, Y,Z,N)\in \S^2(\R^d)\times \H^{2}(\R^{n\times d} ) \times \S^{2,2}(\R^d)\times \H^{2,2}(\R^{n\times d} ) \times \M^{2,2}(\R^d)$, we can apply \Cref{Lemma:partialU} and obtain the existence of $(\partial Y, \partial Z, \partial N)\in \S^{2,2}(\R^d)\times \H^{2,2}(\R^{n\times d} ) \times \M^{2,2}(\R^d)$ such that
\[
\partial Y_t^s=  \partial_s \xi (s,X)+\int_t^T  \nabla f_r(s,X, \partial Y_r^s,\partial Z_r^s,Y_r^s,Z_r^s, \Yc_r, \Zc_r) \d r-\int_t^T\partial  {Z_r^s}^\t  \d X_r-\int_t^T \d \partial N^s_r,\; (s,t)\in[0,T]^2,\; \P\as
\]
We claim that $\mathfrak{h}:= (\Yc , \Zc ,  \widetilde N , Y,  Z,  N,  \partial Y, \partial Z, \partial N)$ is a solution to \eqref{Eq:systemBSDEf}, where $\widetilde N_\cdot:=\Nc_\cdot-\int_0^\cdot \partial N_r^r \d r$. For this, we first note that in light of Lemmata \ref{Lemma:partialU} and \ref{Lemma:intdiagpartialU} we have that 
\begin{align}\label{Eq:claimwpbsvie0}
\Yc_t&=\xi(T,X)+\int_t^T h_r(X,\Yc_r,\Zc_r,Y_r^r,Z_r^r,\partial Y_r^r)\d r-\int_t^T {\Zc_r}^\t \d X_r-\int_t^T \d \widetilde N_r , \; t \in [0,T],\; \P\as
\end{align}
and $\widetilde \Nc\in \M^{2,2}(\R^d)$. We are only left to argue $\Yc\in \S^2(\R^d)$. Note that by \Cref{Assumption:SystemBSVIEwp} and \Cref{Eq:ineqsquare} there exists $C>0$ such that
\begin{align*}
 | \Yc_t |^2 &\leq  C\bigg( |\xi(T,X) |^2+\bigg(\int_0^T |\tilde f_r|\d r \bigg)^2  + \int_0^T \big( |\Yc_r|^2+|\sigma_r^\t \Zc_r|^2+  |Y_r^r|^2+|\sigma_r^\t Z_r^r|^2 + |\partial Y_r^r|^2\big) \d r \\
 &\hspace*{2.5em} +\bigg| \int_t^T \Zc_r^\t \d X_r \bigg|^2+\bigg| \int_t^T \d \widetilde N_r\bigg|^2 \bigg), 
\end{align*}
Moreover, by Doob's inequality we have $\displaystyle\E\bigg[ \sup_{t\in [0,T]} \bigg| \int_0^t \Zc_r^\t   \d X_r\bigg|^2\bigg]\leq 4 \| \Zc\|^2_{\H^2}$ and thus {\color{black} \Cref{Eq:ineqpartialUtt2} yields
\begin{align*}
\|\Yc\|_{\S^2}^2\leq C \Big(   \|\xi \|^2_{\Lc^{2,2}}+  \|\partial_s \xi \|^2_{\Lc^{2,2}}  +\| \tilde f \|_{\L^{1,2,2}}^2 +\| \nabla \tilde f \|_{\L^{1,2,2}}^2  +\|\Yc\|_{\L^2}^2+\|Y\|_{\L^{2,2}}^2+ \|Z\|_{\Ho}^2+\|\partial Z\|_{\H^{2,2}}^2\Big) <\infty.
\end{align*}
We conclude $\|\mathfrak{h}\|_{\Hc}<\infty$, $\mathfrak{h}\in \Hc$ and thus $\mathfrak{h}$ solves \eqref{Eq:systemBSDEf}.}
\end{proof}

\begin{remark}\label{Remark:wpBSVIE}
\begin{enumerate}[label=$(\roman*)$, ref=.$(\roman*)$,wide, labelwidth=!, labelindent=0pt]
\item \label{Remark:wpBSVIE:ii} There are two noticeable differences between {\rm \Cref{Thm:wpbsvie}} and the results in the literature on {\rm type--I BSVIEs} \eqref{Eq:typeIBSVIE}, \eqref{Eq:typeIBSVIEextended0} and \eqref{Eq:typeIBSVIEextended}, as previously studied in {\rm \cite{yong2008well}}, {\rm \cite{wang2019time, hernandez2020me}} and {\rm \cite{hamaguchi2020extended,wang2020extended}}, respectively. The first is the additional terms, involving the derivative with respect to the parameter $s$ of the data, appearing in the \emph{a priori} estimates and the stability. The second one, is the space where the solution lives. Both differences are due to the fact that we are handling the diagonal term for $Z$ in the generator.\medskip

In fact, in light of {\rm \Cref{Prop:aprioriest:simplify}} and {\rm \Cref{Remark:contractionsimplify}} for the case of {\rm type--I BSVIEs} \eqref{Eq:typeIBSVIEextended}, i.e. where only the diagonal of $Y$ is allowed in the generator, one can work in the more standard $($compared to the existing literature$)$ space $(\Hf^{\star}, \|\cdot\|_{\Hf^{\star}})$ given by
\begin{gather*}
 \Hf^{\star}:=  \S^{2,2}(\R^{d}) \timesr \H^{2,2}(\R^{n\times d }) \timesr {\M^{2,2}}(\R^{d}), \; \|(Y,Z,N)  \|^2_{\Hf^{\star}}:=   \|Y\|_{\S^{2,2}}^2 + \|Z\|_{\H^{2,2}}^2+\|N \|_{\M^{2,2}}^2.
\end{gather*}
Then, the \emph{a priori} estimate \eqref{Eq:aprioriestBSVIE} simplifies to
\begin{align*}
\|( Y,Z,N) \|_{\Hf^\star}\leq C \big(   \|\xi \|^2_{\Lc^{2,2}} +\| \tilde f \|_{\L^{1,2,2}}^2 \big),
\end{align*}
and for $(Y^i,Z^i,N^i)$ the solution to {\rm type--I BSVIE} \eqref{Eq:typeIBSVIEextended} with data $(\xi^i,f^i)$ for $i\in \{1,2\}$, we obtain
\begin{align*}
\| (\delta Y,\delta Z,\delta N)  \|^2_{\Hf^\star} \leq &\ C\Big(  \|\delta \xi\big\|^2_{\Lc^2} +  \|\delta_1 f \|_{\L^{1,2}}^2 \Big).
\end{align*}

\item Lastly, we remark that our approach shows that $(Y,Z,N)$ satisfy the so--called {\rm M--property}, see {\rm\cite{yong2008well}}, that is
\[ Y_t^t=\E\big[Y_t^t\big]+\int_0^t {Z_r^t}^\t \d X_r.\]
Indeed, we have $Y_t^t=\E \big[ \xi(t) +\int_t^T f_r(t,X,Y_r^t, Z_r^t, \Yc_r,\Zc_r)   \d r\big| \Fc_t\big]$, so that in combination with the flow property of the first {\rm BSDE} in \eqref{Eq:systemBSDEf} we obtain
\[ \E\bigg[ \xi(t) +\int_0^T f_r(t,X,Y_r^r, Z_r^r, \Yc_r,\Zc_r)   \d r\bigg] = Y_t^t +\int_0^t   f_r(t,X,Y_r^r, Z_r^r, \Yc_r,\Zc_r) -\int_0^t {Z_r^t}^\t \d X_r.\]

\end{enumerate}
\end{remark}

\section{Type--I BSVIEs, parabolic PDEs and time--inconsistent control}\label{Section:PDEsandtimeinconsistency}

This section is devoted to the application of our results in \Cref{Section:BSVIE} to the problem of time--inconsistent control for sophisticated agents. Moreover, we also reconcile seemingly different approaches to the study of this problem.

\subsection{Representation formula for adapted solutions of type--I BSVIEs}

Building upon the fact that second--order, parabolic, semilinear PDEs of HJB type admit a non–linear Feynman--Kac representation formula, we can identify the family of PDEs associated to {\rm Type-I BSVIEs}. This is similar to the representation of forward backward stochastic differential equations, FBSDEs for short, see \cite{wang2019backward}.\medskip

For $(s,t,x,u,y,v,z,\upgamma,\Sigma)\in [0,T]^2\times \Xc\times (\R^d)^2\times (\R^{n\times d})^2\times (\R^{n\times n})^d\times \R^{n\times m}$, we define $\Tr\big[  \Sigma \Sigma^\t \gamma \big]\in \R^n$ by $\big(\Tr\big[  \Sigma \Sigma^\t \gamma \big]\big)_i := \Tr\big[  \Sigma \Sigma^\t \gamma_i \big],\; i\in \{1,\dots,d\}.$ Let $f$ and $\sigma$ be as in the preceding section, and $b:[0,T]\times \Xc \longrightarrow \R^m$ be bounded, $b_\cdot(X)\in \Pc_{\rm meas}(\R^m,\F) $ and 
\begin{align*}
G \big(s,t,x,u,v,y,z,\gamma \big) :=  v^\t\sigma(t,x)  b (t,x)+\frac{1}{2}\Tr[  \sigma\sigma^\t (t,x) \gamma  ]+ f(s,t,x,u,v,y,z).
\end{align*}

\begin{theorem}\label{Thm:rep}
Consider the Markovian setting, i.e. $\varphi_t(X,\cdot)=\varphi_t(X_t,\cdot)$ for $\varphi\in\{b,\sigma,f,\partial_s f\}$, and $\varphi(s,X)=\varphi(s,X_T)$ for $\varphi\in\{\xi,\partial_s \xi\}$. Assume that 

\begin{enumerate}[label=$(\roman*)$, ref=.$(\roman*)$,wide, labelwidth=!, labelindent=0pt]
\item for $(s,t,x)\in [0,T)\times[0,T]\times \R^n=:\Oc$, there exists $\vc\in  \Cc_{1,1,2}^d([0,T]^2\times \R^n)$ classical solution to
\begin{align*}\begin{cases}
\partial_t \vc(s,t,x) +G\big(s,t,x,\vc(s,t,x),\partial_{x} \vc(s,t,x),\vc(t,t,x),\partial_{x} \vc(t,t,x),\partial_{x x}^2\vc(s,t,x)  \big)=0,\; (s,t,x)\in \Oc ,\\[0.7em]
 \vc(s,T,x)=\xi(s,x), \; (s,x)\in [0,T]\times \R^n.
\end{cases}
\end{align*}
\item $\vc(s,t,x)$ and $\partial_x \vc(s,t,x)$ have uniform exponential growth in $x$\footnote{Here, $|\cdot|_1$ denotes the $\ell_1$ norm in $\R^n$, i.e. for $x\in \R^n$, $|x|_1:=\sum_{i=1}^d|x_i|$}, i.e. $\exists C>0$ such that for all $(s,t,x)\in [0,T]^2\times \Xc$
\[ |\vc(t,x)|+|\partial_x \vc(t,x)|\leq C\exp(C |x|_1).\]
\end{enumerate}

Then, $ Y_t^s:=\vc(s,t,X_t)$, and $Z_t^s:=\partial_x \vc(s,t,X_t)$ define a solution to the {\rm type--I BSVIE} given by
\begin{align}\label{Eq:BSDErep}
Y_t^s= Y^s_T+ \int_t^T \big( f _r(s,X_r,Y_r^s,Z_r^s,Y_r^r,Z_r^r)+{Z_r^s}^\t \sigma_r(X_r)b_r(X_r)\big) \d r -\int_t^T {Z_r^s}^\t \d X_r.
\end{align}

\end{theorem}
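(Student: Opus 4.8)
The plan is to verify directly that the candidate processes $Y^s_t=\vc(s,t,X_t)$ and $Z^s_t=\partial_x\vc(s,t,X_t)$ satisfy \eqref{Eq:BSDErep} by applying It\^o's formula to $t\longmapsto \vc(s,t,X_t)$ for each fixed $s$, and then checking that the resulting pair actually lies in the solution space. The hypotheses are tailored to this: $\vc\in\Cc^d_{1,1,2}([0,T]^2\times\R^n)$ is $\Cc^1$ in $t$ and $\Cc^2$ in $x$, which is exactly the regularity needed for It\^o's formula, while the exponential growth bound in $(ii)$ together with the fact that $X$ has bounded diffusion coefficient $\sigma$ (so $X_t$ has exponential moments of every order, via standard martingale-moment / exponential martingale estimates) will deliver the required integrability.

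First I would fix $s\in[0,T)$ and write, by It\^o's formula applied to $r\longmapsto \vc(s,r,X_r)$ on $[t,T]$, that
\begin{align*}
\vc(s,T,X_T)-\vc(s,t,X_t)=\int_t^T\Big(\partial_t\vc(s,r,X_r)+\tfrac12\Tr[\sigma\sigma^\t_r(X_r)\,\partial^2_{xx}\vc(s,r,X_r)]\Big)\d r+\int_t^T\partial_x\vc(s,r,X_r)^\t\,\d X_r,
\end{align*}
using that $\d\langle X\rangle_r=\sigma\sigma^\t_r(X_r)\d r$ and that $X$ is a (local) martingale under $\P$. Next I would substitute the PDE from $(i)$: on $\Oc$ we have $\partial_t\vc(s,r,x)=-G(s,r,x,\vc(s,r,x),\partial_x\vc(s,r,x),\vc(r,r,x),\partial_x\vc(r,r,x),\partial^2_{xx}\vc(s,r,x))$, and the definition of $G$ contributes precisely the term $\tfrac12\Tr[\sigma\sigma^\t_r(X_r)\partial^2_{xx}\vc(s,r,X_r)]$, which cancels the It\^o second-order term, plus $\partial_x\vc(s,r,X_r)^\t\sigma_r(X_r)b_r(X_r)$ and $f_r(s,X_r,\vc(s,r,X_r),\partial_x\vc(s,r,X_r),\vc(r,r,X_r),\partial_x\vc(r,r,X_r))$. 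Recognising $Y^s_r=\vc(s,r,X_r)$, $Z^s_r=\partial_x\vc(s,r,X_r)$, $Y^r_r=\vc(r,r,X_r)$, $Z^r_r=\partial_x\vc(r,r,X_r)$, and $Y^s_T=\vc(s,T,X_T)=\xi(s,X_T)$ from the terminal condition, rearranging gives exactly \eqref{Eq:BSDErep} with no orthogonal martingale term, as claimed.

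It then remains to check that $(Y,Z)$ defines a genuine solution, i.e. that $Y\in\S^{2,2}(\R^d)$, $Z\in\Ho(\R^{n\times d})$, and $\Zc=(Z^t_t)_{t\in[0,T]}=(\partial_x\vc(t,t,X_t))_{t\in[0,T]}\in\H^2(\R^{n\times d})$, and moreover that $s\longmapsto(Y^s,Z^s)$ has the requisite continuity in the $\S^{2,2}$ and $\Ho$ topologies. For the norm bounds I would use $(ii)$: $|Y^s_t|+|Z^s_t|\le C\exp(C|X_t|_1)$, so $\sup_{s\in[0,T]}\E[\sup_{t\in[0,T]}|Y^s_t|^2]\le C^2\,\E[\exp(2C|X|^*_1)]<\infty$ where $X^*:=\sup_{t\in[0,T]}|X_t|$, and the exponential moment of $X^*$ is finite because $\sigma$ is bounded (one bounds $X^*$ by a time-changed Brownian sup and invokes the reflection principle, or uses the BDG/exponential-martingale inequality). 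The same pointwise bound handles $\|Z\|_{\Ho}$ and $\|\Zc\|_{\H^2}$ since $\Tr[\langle Z^s\bullet X\rangle_T]=\int_0^T|\sigma^\t_r(X_r)Z^s_r|^2\d r\le T\|\sigma\|_\infty^2\,C^2\exp(2C X^*_1)$. For the continuity in $s$, I would combine the joint continuity of $\vc$ and $\partial_x\vc$ (from $\vc\in\Cc^d_{1,1,2}$) with the exponential-growth domination and dominated convergence, exactly as in the FBSDE representation results of \cite{wang2019backward}.

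The main obstacle is not the It\^o computation, which is routine once the regularity is granted, but the verification of the integrability/continuity needed to place $(Y,Z)$ in $(\Hc^\star,\|\cdot\|_{\Hc^\star})$ — in particular controlling the diagonal $\Zc=(\partial_x\vc(t,t,X_t))_t$ in $\H^2$, which requires the growth estimate $(ii)$ to hold for $\partial_x\vc$ along the diagonal $s=t$ and not merely for fixed $s$, and matching it against the exponential moments of the canonical process under $\P$. I would also take care that the stochastic integral $\int_t^T\partial_x\vc(s,r,X_r)^\t\d X_r$ is a true martingale (not merely local) so that no spurious drift is introduced — again a consequence of the $\H^2$ bound just established. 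Finally, I would note that since the representation produces a solution with vanishing orthogonal component, uniqueness in $\Hc^\star$ (from \Cref{Thm:wpbsvie}) identifies $(Y,Z,0)$ as \emph{the} solution.
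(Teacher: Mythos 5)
Your proposal is correct and follows essentially the same route as the paper: apply It\^o's formula to $r\longmapsto \vc(s,r,X_r)$ for fixed $s$, substitute the PDE from $(i)$ so that the second--order term cancels and the generator plus the drift correction ${Z^s}^\t\sigma b$ appear, and then verify integrability via the exponential growth bound in $(ii)$ combined with the uniformly bounded exponential moments of $X$ under $\P$ (a consequence of $\sigma$ being bounded). Your additional remarks on the continuity in $s$, the control of the diagonal $\Zc$, and the true--martingale property of the stochastic integral are consistent elaborations of the same argument rather than a different approach.
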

\begin{proof}
Let $s\in [0,T]$ and $\P$ as in \Cref{Sec:Pstatement}. Applying It{\= o}'s formula to the process $Y_t^s$ we find that $\P\as$
\begin{align*}
Y_t^s& =Y^s_T-\int_t^T\bigg( \partial_t \vc(s,r,X_r)+\frac{1}{2}\Tr[\sigma \sigma^\t_r(X_r) \partial^2_{x x} \vc(s,r,X_r)]\bigg) \d r -\int_t^T {Z_r^s}^\t \d X_r\\
& = Y^s_T+ \int_t^T \big( f _r(s,X_r,Y_r^s,Z_r^s,Y_r^r,Z_r^r)+{Z_r^s}^\t \sigma_r(X_r)b_r(X_r)\big) \d r -\int_t^T {Z_r^s}^\t \d X_r.
\end{align*}

We are left to check the integrability of $Y$ and $Z$. As $\sigma$ is bounded, it follows that $X_t$ has exponential moments of any order which are bounded on $[0,T]$, i.e. $\exists C>0$, such that $ \sup_{t\in[0,T]} \E^\P[\exp ( c |X_t|_1)] \leq  C < \infty$, for any $c > 0$, where $C$ depends on $T$ and the bound on $\sigma$. This together with the growth condition on $\vc(s,t,x)$ and $\partial_x \vc(s,t,x)$ yield the integrability.
\end{proof}

\begin{remark}
The previous result provides a representation for a particular class of {\rm type--I BSVIEs} as in \eqref{Eq:BSDErep}, i.e. with an additional term linear in $z$. This is a consequence of the dynamics of $X$ under $\P$, see {\rm \Cref{Section:stochasticbasis}}. Nevertheless, as $b$ is bounded, we can define $\P^b\in \Prob(\Xc)$, equivalent to $\P$, given by
\begin{align*}
\frac{\d \P^b}{\d \P}:=\exp \bigg( \int_0^T b_r(X_r)\cdot \d B_r-\int_0^T |b_r(X_r) |^2 \d r\bigg).
\end{align*} 
Girsanov's theorem then shows that $B^b:=B-\int_0^\cdot b_r(X_r)\d r$ is a $\P^b$--Brownian motion and
\[ X_t=x_0+\int_0^t \sigma_r(X_r) b_r(X_r)\d r + \int_0^t \sigma_r(X_r)\d B_r^b,\; t\in [0,T],\; \P^b\as,\]
and consequently
\begin{align*}
Y_t^s=Y^s_T+\int_t^T  f _r(s,X_r,Y_r^s,Z_r^s,Y_r^r,Z_r^r)  \d r -\int_t^T Z_r^\t \d X_r,\; t\in [0,T],\; \P\as
\end{align*}
\end{remark}

\subsection{On the characterisation of equilibria and their value function for time--inconsistent control problems}
The systematic study of time--inconsistent stochastic control problems in continuous--time for sophisticated agents started with the Markovian setting, and is grounded in the notion of equilibrium first proposed in \citet*{ekeland2008investment}, \citet*{ekeland2010golden} and the further analysis of \citet*{bjork2017time}. The main contribution of \cite{bjork2017time} was to provide an infinite dimensional system of PDEs, or Hamilton--Jacobi--Bellman equation, that identifies the value function and an equilibrium policy, see \Cref{Eq:HJB-BKM} below. Soon after, \citet*{wei2017time} presented a verification argument via a one dimensional PDE, but over an extended domain, see \Cref{Eq:HJB-WY} below. Both approaches have generated independent lines of research in the community, including both analytic and probabilistic methods, but no compelling connections have been established, as far as we know. \medskip

As discussed in the introduction, BSDEs and BSVIEs appear naturally as part of the probabilistic study of time--inconsistent stochastic control problems. Some of the advantages of this approach are the possibility to extend the study to the non--Markovian framework, and to allow for rewards functionals inspired by the concept of recursive utilities. Indeed, the approaches in \cite{hernandez2020me} and \cite{wang2019time} address these directions, and can be regarded as extensions of \cite{bjork2017time} and \cite{wei2017time}, respectively. As such, it is not surprising that in order to characterise an equilibrium and its associated value function, both \cite{hernandez2020me} and \cite{wang2019time} lay down an infinite dimensional systems of BSDEs, and a type--I BSVIEs, respectively.  In fact, \cite[Theorem 3.7]{hernandez2020me} and \cite[Theorem 5.1]{wang2019time} establish representation formulae for the analytic, i.e. PDEs, counterparts. \medskip

Nevertheless, as part of the analysis of time--inconsistent control through BSDEs, \cite{hernandez2020me} noticed that their approach led to the well--posedness of a BSVIE. This is nothing but a manifestation of our results in \Cref{Section:BSVIE}. Indeed, one of the virtues of \Cref{Thm:wpbsvie} is that it reconciles, at the probabilistic level, the findings of \cite{hernandez2020me} and \cite{wang2019time}. We will detail this relationship in the following. Moreover, we also include \Cref{Thm:eqvpds} which does the same at the PDE level. To sum up, we can visualise this in the next picture. 

\begin{center}
\tikzset{every picture/.style={line width=0.75pt}} 
\begin{tikzpicture}[x=0.75pt,y=0.75pt,yscale=-1,xscale=1]
\draw (113.62,63.35) node   {\cite{bjork2017time}};
\draw (220.18,48.25) node    {\cite[Theorem 3.7]{hernandez2020me}};
\draw (327.42,62.04) node     {\ \cite{hernandez2020me}};
\draw (63.62,115.25) node    {{\rm \Cref{Thm:eqvpds}}};
\draw (372.3,115.25) node    {{\rm \Cref{Thm:wpbsvie}}};
\draw (113.62,172.46) node    {\cite{wei2017time}};
\draw (220.18,186.25) node    {\cite[Theorem 5.1] {wang2019time}};
\draw (325.62,172.46) node    {\ \cite{wang2019time}};

\draw [line width=0.75]    (127.62,63.26) -- (311.42,62.14) ;
\draw [shift={(313.42,62.13)}, rotate = 539.65] [color={rgb, 255:red, 0; green, 0; blue, 0 }  ][line width=0.75]    (10.93,-3.29) .. controls (6.95,-1.4) and (3.31,-0.3) .. (0,0) .. controls (3.31,0.3) and (6.95,1.4) .. (10.93,3.29)   ;
\draw [shift={(220.52,62.69)}, rotate = 539.65] [color={rgb, 255:red, 0; green, 0; blue, 0 }  ][line width=0.75]    (10.93,-3.29) .. controls (6.95,-1.4) and (3.31,-0.3) .. (0,0) .. controls (3.31,0.3) and (6.95,1.4) .. (10.93,3.29)   ;
\draw [line width=0.75]    (127.62,172.37) -- (309.62,172.37) ;
\draw [shift={(311.62,172.37)}, rotate = 539.65] [color={rgb, 255:red, 0; green, 0; blue, 0 }  ][line width=0.75]    (10.93,-3.29) .. controls (6.95,-1.4) and (3.31,-0.3) .. (0,0) .. controls (3.31,0.3) and (6.95,1.4) .. (10.93,3.29)   ;
\draw [shift={(219.62,172.37)}, rotate = 539.65] [color={rgb, 255:red, 0; green, 0; blue, 0 }  ][line width=0.75]    (10.93,-3.29) .. controls (6.95,-1.4) and (3.31,-0.3) .. (0,0) .. controls (3.31,0.3) and (6.95,1.4) .. (10.93,3.29)   ;
\draw [line width=0.75]    (327.22,76.04) -- (325.81,157.15) ;
\draw [shift={(325.78,159.15)}, rotate = 270.8] [color={rgb, 255:red, 0; green, 0; blue, 0 }  ][line width=0.75]    (10.93,-3.29) .. controls (6.95,-1.4) and (3.31,-0.3) .. (0,0) .. controls (3.31,0.3) and (6.95,1.4) .. (10.93,3.29)   ;
\draw [shift={(327.25,74.04)}, rotate = 90.8] [color={rgb, 255:red, 0; green, 0; blue, 0 }  ][line width=0.75]    (10.93,-3.29) .. controls (6.95,-1.4) and (3.31,-0.3) .. (0,0) .. controls (3.31,0.3) and (6.95,1.4) .. (10.93,3.29)   ;
\draw [line width=0.75]  [dash pattern={on 0.84pt off 2.51pt}]  (113.62,77.35) -- (113.62,158.46) ;
\draw [shift={(113.62,160.46)}, rotate = 270] [color={rgb, 255:red, 0; green, 0; blue, 0 }  ][line width=0.75]    (13.12,-3.95) .. controls (8.34,-1.68) and (3.97,-0.36) .. (0,0) .. controls (3.97,0.36) and (8.34,1.68) .. (13.12,3.95)   ;
\draw [shift={(113.62,75.35)}, rotate = 90] [color={rgb, 255:red, 0; green, 0; blue, 0 }  ][line width=0.75]    (13.12,-3.95) .. controls (8.34,-1.68) and (3.97,-0.36) .. (0,0) .. controls (3.97,0.36) and (8.34,1.68) .. (13.12,3.95)   ;
\end{tikzpicture}
\end{center}

Let $A\subseteq \R^p$ be a compact set, we introduce the mappings 
\begin{align*}
 & \bar f: [0,T]^2\times \R^n \times A  \longrightarrow \R ,\; f_\cdot(s,\cdot ,a )\in \Pc_{{\rm prog}}(\R,\F),\text{ for every } (s,a)\in [0,T]\times A,\\
 & b: [0,T]\times \R^n \times A \longrightarrow \R^m, \text{ bounded, } \; b_\cdot(\cdot,a) \in \Pc_{{\rm meas}}(\R^m,\F) \text{ for every } a\in A.
\end{align*}
With this we may define
\begin{align*}
& \bar g (s,t,x,a,v):= \bar f(s,t,x,a)+v\cdot  \sigma(t,x) b(t,x,a),\; H(s,t,x,v):= \sup_{a\in A} \, \bar g (s,t,x,a,v),\\
&\nabla \bar g (s,t,x,a,{\rm v}):=  \partial_s \bar f(s,t,x,a)+{\rm v} \cdot  \sigma(t,x) b(t,x,a),
\end{align*}
and assume there exists $a^\star:[0,T]^2\times \R^n \times \R^n\longrightarrow A$, measurable, such that, $\bar g (s,t,x,a^\star(s,t,x,v),v)=H(s,t,x,v)$.\medskip

Following the approach of \cite{wang2019time}, let us assume that given an admissible $A$--valued strategy $\nu$ over the interval $[s,T]$, the reward at $s\in [0,T]$ is given by the value at $s$ of the $Y$ coordinate of the solution to the type--I BSVIE given by

\[ Y_t^\nu=\xi(t,X_T)+\int_t^T \tilde g_r(t,X_r,\nu_r,Z_r^t) \d r-\int_t^T Z_r^t\cdot  \d X_r,\; t\in [s,T],\; \P\as \]

This problem is time--inconsistent, and we are interested in finding an \emph{equilibrium policy}. \cite{wang2019time} finds that the value of the game coincides with the $Y$ coordinate of the following type--I BSVIE
\[ Y_t=\xi(t,X_T)+\int_t^T \bar g_r\big(t,X_r,a^\star(r,r,X_r,Z_r^r),Z_r^t\big)\d r-\int_t^T Z_r^t \cdot \d X_r,\; t\in [0,T],\; \P\as,  \]
where the diagonal of $Z$ appears in the generator. However, decoupling the dependence between the time variable and the variable source of time--inconsistency, we can define

\[ Y_t^s:=\xi(s,X_T)+\int_t^T \bar g_r\big(s,X_r,a^\star(r,r,X_r,Z_r^r),Z_r^s\big)\d r-\int_t^T Z_r^s\cdot \d X_r, \;  0\leq s \leq t \leq T,\; \P\as  \]

\Cref{Thm:wpbsvie} shows the equivalence of this approach to that of \cite{hernandez2020me}, based on the system
\begin{align*}
Y_t& =\xi(T,X_T)+\int_t^T \big( H_r(r,X_r,Z_r)-\partial Y_r^r\big) \d r-\int_t^T Z_r \cdot \d X_r,\; t\in [0,T],\; \P\as\\
\partial Y_t^s& =\partial_s \xi(s,X_T)+\int_t^T \nabla \bar g_r(s,X_r,a^\star(r,r,X_r,Z_r^r),\partial Z_r^s)\d r-\int_t^T \partial Z_r^s \cdot \d X_r,\;  0\leq s \leq t \leq T,\; \P\as
\end{align*}

We now move on to establish the connection of the analyses at the PDE level. The original result of \cite{bjork2017time} is based on the semi--linear PDE system of HJB type given for $\big(V(t,x),\Jc(s,t,x) \big) \in  \Cc_{1,2}([0,T] \times \R^n)\times \Cc_{1,1,2}([0,T]^2\times \R^n)$ by
\begin{align}\label{Eq:HJB-BKM}
\begin{cases}
  \displaystyle \partial_t V(t,x)+\frac{1}2 \Tr[\sigma\sigma^\t(t,x) \partial_{xx}V(t,x)]+  H(t,t,x,\partial_{x}V(t,x))- \partial_s\Jc(t,t,x)=0,\; (s,t,x)\in\Oc,   \\[0.7em]
  \displaystyle   \partial_t \Jc(s,t,x)+ \frac{1}2 \Tr[\sigma\sigma^\t(t,x) \partial_{xx} \Jc(s,t,x)]+  \bar g\big (s,t,x,  \partial_x \Jc(s,t,x),a^\star\big(t,t,x,\partial_x V(t,x)\big)\big)=0,\; (s,t,x)\in\Oc,\\[0.7em]
  \displaystyle  V(T,x)=\xi(T,x),\;  \Jc(s,T,x)= \xi(s,x),\; (s,x)\in [0,T]\times \R^d.
\end{cases}
\end{align}
On the other hand, \cite{wei2017time} considers the equilibrium HJB equation for $ \Jc(s,t,x) \in  \Cc_{1,1,2}([0,T]^2\times \R^n)$ given by
\begin{align}\label{Eq:HJB-WY}
\begin{cases}
  \displaystyle   \partial_t \Vc(s,t,x)+ \frac{1}2 \Tr[\sigma\sigma^\t(t,x) \partial_{xx} \Vc(s,t,x)]+  \bar g\big(s,t,x,  \partial_x \Vc(t,t,x), a^\star\big(t,t,x,\partial_x \Vc(t,t,x)\big)\big)=0,\; (s,t,x)\in\Oc,\\[0.7em]
  \displaystyle  \Vc(s,T,x)= \xi(s,x),\; (s,x)\in [0,T]\times \R^d.
\end{cases}
\end{align}

By setting $\Vc(s,t,x)=\Jc(s,t,x)$, it is immediate that a solution to \eqref{Eq:HJB-BKM} defines a solution to \eqref{Eq:HJB-WY}. The next theorem establishes the converse result.

\begin{theorem}\label{Thm:eqvpds}
Suppose \eqref{Eq:HJB-BKM} and \eqref{Eq:HJB-WY} are well--posed.
\begin{enumerate}[label=$(\roman*)$, ref=.$(\roman*)$,wide, labelwidth=!, labelindent=0pt]
\item Let $\Jc(s,t,x) \in  \Cc_{1,1,2}([0,T]^2\times \R^n)$ solve \eqref{Eq:HJB-WY}. Then $\Vc(s,t,x):=\Jc(s,t,x)$ solves \eqref{Eq:HJB-WY}.
\item Let $\Vc(s,t,x) \in  \Cc_{1,1,2}([0,T]^2\times \R^n)$ solve \eqref{Eq:HJB-WY}. Then $\big(V(t,x),\Jc(s,t,x) \big):=\big(\Vc(t,t,x),\Vc(s,t,x)\big)$ solves \eqref{Eq:HJB-BKM}.
\end{enumerate}
\end{theorem}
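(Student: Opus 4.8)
The plan is to prove the two implications separately, the substantive one being $(ii)$; the inclusion $(i)$ --- already announced before the statement --- will then be read off from $(ii)$ together with the standing well--posedness hypotheses. The only analytic input throughout is the chain rule on the diagonal $s=t$, which is licit precisely because the functions in play are $\Cc_{1,1,2}$, together with the identity $\bar g(s,t,x,v,a^\star(s,t,x,v))=H(s,t,x,v)$ built into the definitions of $H$ and $a^\star$.

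\emph{Part $(ii)$.} Let $\Vc\in\Cc_{1,1,2}([0,T]^2\times\R^n)$ solve \eqref{Eq:HJB-WY} and set $V(t,x):=\Vc(t,t,x)$, $\Jc(s,t,x):=\Vc(s,t,x)$. First I would record that $t\longmapsto\Vc(t,t,x)$ is continuously differentiable with $\partial_tV(t,x)=\partial_s\Vc(t,t,x)+\partial_t\Vc(t,t,x)$, while $\partial_xV(t,x)=\partial_x\Vc(t,t,x)$ and $\partial_{xx}V(t,x)=\partial_{xx}\Vc(t,t,x)$; in particular $V\in\Cc_{1,2}([0,T]\times\R^n)$, so the candidate $(V,\Jc)$ lives in the right space. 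The second equation of \eqref{Eq:HJB-BKM} for $(V,\Jc)$ is, after substituting $\Jc=\Vc$ and $\partial_xV(t,x)=\partial_x\Vc(t,t,x)$, exactly \eqref{Eq:HJB-WY}, hence holds. For the first equation I would specialise \eqref{Eq:HJB-WY} to $s=t$ and recognise the generator term there as $H(t,t,x,\partial_xV(t,x))$ via the definition of $a^\star$; solving for $\partial_t\Vc(t,t,x)$, plugging into $\partial_tV(t,x)=\partial_s\Vc(t,t,x)+\partial_t\Vc(t,t,x)$, and using $\partial_s\Vc(t,t,x)=\partial_s\Jc(t,t,x)$ reproduces the first equation of \eqref{Eq:HJB-BKM}. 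The terminal conditions follow from $\Vc(s,T,x)=\xi(s,x)$: $V(T,x)=\Vc(T,T,x)=\xi(T,x)$ and $\Jc(s,T,x)=\Vc(s,T,x)=\xi(s,x)$. Hence $(V,\Jc)$ solves \eqref{Eq:HJB-BKM}.

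\emph{Part $(i)$.} Let $(V,\Jc)$ solve \eqref{Eq:HJB-BKM} and set $\Vc:=\Jc$. Comparing the second equation of \eqref{Eq:HJB-BKM} with \eqref{Eq:HJB-WY}, the only point is that the controls agree, i.e. that $\partial_xV(t,x)=\partial_x\Jc(t,t,x)$. The quickest route is to invoke well--posedness: \eqref{Eq:HJB-WY} has a unique solution $\Vc^\star$, and by part $(ii)$ the pair $\big(\Vc^\star(t,t,x),\Vc^\star(s,t,x)\big)$ solves \eqref{Eq:HJB-BKM}; uniqueness for \eqref{Eq:HJB-BKM} then forces $\Jc=\Vc^\star$ (and $V(t,x)=\Vc^\star(t,t,x)$), so that $\Vc=\Jc=\Vc^\star$ solves \eqref{Eq:HJB-WY}. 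Alternatively, and without using uniqueness of \eqref{Eq:HJB-WY}, I would show directly that $\tilde V(t,x):=\Jc(t,t,x)$ and $V$ solve the same linear parabolic Cauchy problem: the diagonal chain rule applied to $\Jc(t,t,x)$ together with the second equation of \eqref{Eq:HJB-BKM} at $s=t$ shows that both $u=\tilde V$ and $u=V$ satisfy
\[
\partial_tu(t,x)+\frac{1}{2}\Tr[\sigma\sigma^\t(t,x)\partial_{xx}u(t,x)]+\partial_xu(t,x)\cdot\sigma(t,x)b\big(t,x,a^\star(t,t,x,\partial_xV(t,x))\big)+\beta(t,x)=0,\quad u(T,x)=\xi(T,x),
\]
with $\beta(t,x):=\bar f(t,t,x,a^\star(t,t,x,\partial_xV(t,x)))-\partial_s\Jc(t,t,x)$, where one uses that $\bar g$ is affine in its gradient slot and that $\bar g(t,t,x,\partial_xV(t,x),a^\star(t,t,x,\partial_xV(t,x)))=H(t,t,x,\partial_xV(t,x))$. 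Since $\sigma$ and $b$ are bounded, uniqueness for this linear equation --- e.g. through the Feynman--Kac representation against the SDE with drift $\sigma b$ evaluated at the above control, which is weakly solvable as in \Cref{Section:stochasticbasis} --- yields $\tilde V=V$, hence $\partial_xV(t,x)=\partial_x\Jc(t,t,x)$; substituting back turns the second equation of \eqref{Eq:HJB-BKM} into \eqref{Eq:HJB-WY} for $\Vc=\Jc$.

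I do not expect a serious obstacle: part $(ii)$ is a bookkeeping computation once $\Cc_{1,1,2}$ regularity legitimises the diagonal chain rule, and part $(i)$ collapses to the identity $\partial_xV(t,x)=\partial_x\Jc(t,t,x)$. The one step calling for a genuine, if standard, argument is that identity when one avoids routing through the assumed uniqueness of \eqref{Eq:HJB-WY}: it rests on uniqueness for a linear parabolic equation with merely bounded measurable coefficients, so I would phrase it probabilistically rather than via a classical comparison principle. I would present the uniqueness--based route as the main proof and keep the linear--PDE argument as a remark.
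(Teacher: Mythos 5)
Your proof of part $(ii)$ is correct and is essentially the paper's proof: the entire content is the diagonal chain rule $\partial_t V(t,x)=\partial_s\Vc(t,t,x)+\partial_t\Vc(t,t,x)$ --- precisely the identity $-\partial_t V(t,x)+\partial_s\Jc(t,t,x)=-\partial_t\Vc(t,t,x)$ the paper invokes --- together with $\bar g\big(t,t,x,\cdot\,,a^\star(t,t,x,\cdot)\big)=H(t,t,x,\cdot)$ to recognise the first equation of \eqref{Eq:HJB-BKM} as \eqref{Eq:HJB-WY} restricted to $s=t$; the second equation and the terminal conditions are immediate by construction. Where you genuinely depart from the paper is part $(i)$. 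As printed, $(i)$ is a tautology (evidently a typo), and you read it, consistently with the sentence preceding the theorem, as the direction ``a solution of \eqref{Eq:HJB-BKM} yields a solution of \eqref{Eq:HJB-WY}'', which the paper declares immediate and does not prove. You correctly identify the point the paper glosses over: the two second equations only coincide once one knows $\partial_x V(t,x)=\partial_x\Jc(t,t,x)$, and the system \eqref{Eq:HJB-BKM} as displayed does not impose the constraint $V(t,x)=\Jc(t,t,x)$. Your primary route --- take the unique solution of \eqref{Eq:HJB-WY}, push it through part $(ii)$, and invoke uniqueness for \eqref{Eq:HJB-BKM} --- uses only the standing well--posedness hypothesis and is airtight; your alternative, showing that $V$ and $t\longmapsto\Jc(t,t,x)$ solve the same linear parabolic Cauchy problem with bounded coefficients and terminal datum $\xi(T,\cdot)$, is also correct modulo the standard growth conditions needed for Feynman--Kac uniqueness, which you flag. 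So for the substantive direction you match the paper, and for the converse you supply an argument the paper omits. One cosmetic caveat: the displayed \eqref{Eq:HJB-WY} carries $\partial_x\Vc(t,t,x)$ where the second equation of \eqref{Eq:HJB-BKM} carries $\partial_x\Jc(s,t,x)$ in the gradient slot of $\bar g$ (and the argument order of $\bar g$ is swapped relative to its definition); you implicitly read the two slots as matching, which is the only reading under which the paper's own one--line proof of $(ii)$ goes through.
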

\begin{proof}
We are only left to argue $(ii)$. It is clear $\big(V(t,x),\Jc(s,t,x) \big) \in  \Cc_{1,2}([0,T] \times \R^n)\times \Cc_{1,1,2}([0,T]^2\times \R^n)$, the results follows as $- \partial_t V(t,x)+\partial_s \Jc (t,t,x) = - \partial_t \Vc(t,t,x).$
\end{proof}

\section{Analysis of the BSDE system}\label{Section:Analysis}
Let us first recall the elementary inequalities
\begin{align}\label{Eq:ineqsquare}
\bigg(\sum_{i=1}^n a_i\bigg)^2\leq n\sum_{i=1}^n a_i^2,
\end{align}
valid for any positive integer $n$ and any collection $(a_i)_{1\leq i\leq n}$ of non--negative numbers, as well as, Young's inequality which guarantees that for any $\eps >0$, $2ab\leq \eps a^2 +\eps^{-1} b^2$.\medskip

In order to alleviate notations, and as it is standard in the literature, we suppress the dependence on $\omega$, i.e. on $X$, in the functionals, and, write $\E$ instead of $\E^\P$ as the underlying probability measure $\P$ is fixed. Moreover, we will write $\I^2$ instead of $\I^2 (E)$ for any of the integrability spaces involved, the specific space $E$ is fixed and understood without ambiguity.

\subsection{Regularity of  the system and the diagonal processes}

In preparation to the proof of \Cref{Thm:wp}, we present next a couple of lemmata from which we will benefit in the following. As a historical remark, we mention the following is in the spirit of the analysis in {\rm \citet*[Section 3]{protter1985volterra}} and {\rm \citet*{pardoux1990stochastic}} of forward Volterra integral equations.\medskip

A technical detail in our analysis is to identify appropriate spaces so that given $(\partial U,U,V,M   )$ one can rigorously define the processes $\big((U_t^t)_{t\in[0,T]},(V_t^t)_{t\in[0,T]},(M_t^t)_{t\in[0,T]}, (\partial U_t^t)_{t\in[0,T]}\big)$. It is known that for $ U\in \S^{2,2}$, the diagonal process $( U_t^t)_{t\in [0,T]}$ is well--defined. Indeed, this follows from the pathwise regularity of $U^s$ for $s\in [0,T]$ and has been noticed since {\rm \cite{hamaguchi2020extended,hernandez2020me, wang2017optimal2}}. The same argument works for $(\partial U,M)\in \S^{2,2}  \times {\M^{2,2}}$. Unfortunately, the same reasoning cannot be applied for arbitrary $V\in \H^{2,2}$ and motives the introduction of the space $\Ho$, see \Cref{Remark:wpsystem}.

\begin{lemma}\label{Lemma:partialU}
Let {\rm \Cref{AssumptionA}} hold and $(\Yc,\Zc)\in \L^2\times \H^2$. Let $(U,V,M)\in\L^{2,2} \times \H^{2,2} \times \M^{2,2}$ be a solution to
\begin{align*}
 U_t^s&=   \eta (s)+\int_t^T  g_r(s,U_r^s,V_r^s, \Yc_r, \Zc_r) \d r-\int_t^T   {V_r^s}^\t  \d X_r-\int_t^T \d   M^s_r, \; (s,t)\in [0,T]^2,\;  \P\as
\end{align*}
\begin{enumerate}[label=$(\roman*)$, ref=.$(\roman*)$,wide, labelwidth=!, labelindent=0pt]
\item \label{Lemma:partialU:i} There exists $(\partial U , \partial V, \partial M)\in  \S^{2,2} \times  \H^{2,2} \times \M^{2,2}$ unique solution to 
\begin{align}\label{Eq:BSDEderivative}
\partial U_t^s&=  \partial_s \eta (s)+\int_t^T  \nabla g_r(s,\partial U_r^s,\partial V_r^s,U_r^s,V_r^s, \Yc_r, \Zc_r) \d r-\int_t^T\partial  {V_r^s}^\t  \d X_r-\int_t^T \d \partial M^s_r, \; (s,t)\in [0,T]^2,\;  \P\as ;
\end{align}
\item \label{Lemma:partialU:ii} there exist $C>0$, such that for all $c>4L_{ g}$ and $(s,t)\in [0,T]^2$
\begin{align}\label{Eq:aprioriest:partialUV}
\begin{split}
 \E\bigg[ \int_t^T\e^{cr}  |\sigma_r^\t \partial V_r^s|^2 \d r\bigg] & \leq  C \E\bigg[\e^{cT}  \big| \partial_{ s} \eta(s)\big|^2+ \bigg( \int_t^T \e^{\frac{c}2 r} |\nabla \tilde g_r(s)|\d r\bigg)^2 \\
&\hspace{3em}+\int_t^T\e^{c r} \big(|U_r^s|^2 +|\sigma_r^\t V_r^s|^2 +|\Yc_r|^2+|\sigma_r^\t  \Zc_r|^2\big) \d r \bigg];
\end{split}
\end{align}

\item \label{Lemma:partialU:iii} \eqref{Eq:constraintspace} holds, i.e. for any $s\in [0,T]$ 
\begin{align*}\tag{$\Dc$}
\bigg(\int_s^T \partial U^r \d r,\int_s^T \partial V^r \d r,\int_s^T \partial M^r \d r\bigg)=\big(U^T-U^s,V^T-V^s,M^T-M^s \big), \text{ in } \S^2\times \H^2\times \M^2;
\end{align*}

\item \label{Lemma:partialU:iv} $V\in \Ho$. Moreover, for $\Vc:=(V_t^t)_{t\in [0,T]}$ and $\eps>0$, $\P\as$
\begin{align}\label{Eq:ineqVtt2:0}
 \int_t^T |\sigma^\t_u \Vc_u|^2 \d u \leq\int_t^T | \sigma^\t_uV_u^t|^2 \d u+ \int_t^T\int_r^T \eps |\sigma_u^\t  V^r_u|^2+ \eps^{-1} |\sigma_u^\t \partial V^r_u |^2 \d u \d r,\ t\in [0,T].
\end{align}
\end{enumerate}
\end{lemma}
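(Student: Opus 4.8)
\textbf{Proof plan for Lemma \ref{Lemma:partialU}.}

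The plan is to treat the four claims in order, since each builds on the previous. For part \ref{Lemma:partialU:i}, I would fix $s\in[0,T]$ and view \eqref{Eq:BSDEderivative} as a \emph{linear} (in fact affine) standard BSDE in the unknowns $(\partial U^s,\partial V^s,\partial M^s)$: indeed, by \Cref{AssumptionA}\ref{AssumptionA:i}, the generator $\nabla g_r(s,\cdot,\cdot,U_r^s,V_r^s,\Yc_r,\Zc_r)$ is affine in $(\partial U^s,\partial V^s)$ with Lipschitz coefficients $\partial_u g$ and $\partial_{v_{:i}} g$ (bounded by $L_g$ from \Cref{AssumptionA}\ref{AssumptionA:iii}), and with inhomogeneous term $\partial_s g_r(s,U_r^s,V_r^s,\Yc_r,\Zc_r)$. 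I would check this inhomogeneous term lies in $\L^{1,2}$ using \Cref{AssumptionA}\ref{AssumptionA:iii} (Lipschitz in $(u,v,y,z)$) and \Cref{AssumptionA}\ref{AssumptionA:iv} (integrability of $\nabla\tilde g_\cdot(s)=\partial_s g_\cdot(s,{\bf 0})$ in $\L^{1,2,2}$), together with $(U,V)\in\L^{2,2}\times\H^{2,2}$ and $(\Yc,\Zc)\in\L^2\times\H^2$. The terminal condition $\partial_s\eta(s)\in\Lc^2$ comes from \Cref{AssumptionA}\ref{AssumptionA:i} (differentiability of $\eta$). Existence and uniqueness of $(\partial U^s,\partial V^s,\partial M^s)\in\S^2\times\H^2\times\M^2$ for each fixed $s$ is then the standard well-posedness theorem for BSDEs with orthogonal martingale term under a general filtration (the same black-box result that \Cref{Thm:wp} ultimately relies on). To upgrade this to membership in the two-parameter spaces $\S^{2,2}\times\H^{2,2}\times\M^{2,2}$, i.e. continuity of $s\longmapsto(\partial U^s,\partial V^s,\partial M^s)$ and finiteness of the supremum over $s$ of the norms, I would apply the standard BSDE stability estimate to the difference of the equations for parameters $s$ and $s'$; continuity then follows from continuity of $s\longmapsto\partial_s\eta(s)$ in $\Lc^2$ (\Cref{AssumptionA}\ref{AssumptionA:i}), continuity of $s\longmapsto(U^s,V^s)$ in $\S^2\times\H^2$, and the uniform continuity hypotheses on the generator data in \Cref{AssumptionA}\ref{AssumptionA:i}.

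For part \ref{Lemma:partialU:ii}, this is just the quantitative version of the same linear BSDE: I would apply It\^o's formula to $r\longmapsto\e^{cr}|\partial U_r^s|^2$ on $[t,T]$, take expectations (the stochastic integrals against $X$ and $\partial M^s$ being true martingales by the integrability already established), and use the affine-Lipschitz structure of $\nabla g$ with Young's inequality $2ab\le\eps a^2+\eps^{-1}b^2$ to absorb the $|\partial U^s|^2$ and $|\sigma^\t\partial V^s|^2$ contributions coming from $\partial_u g\,\partial U^s$ and $\sum_i\partial_{v_{:i}}g\,\partial V^s_{i:}$ into the left-hand side; the choice $c>4L_g$ is exactly what makes the coefficient of $\int_t^T\e^{cr}|\sigma_r^\t\partial V_r^s|^2\d r$ strictly positive after absorption. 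The remaining inhomogeneous pieces $\partial_s g_r(s,U_r^s,V_r^s,\Yc_r,\Zc_r)$ are bounded, via \Cref{AssumptionA}\ref{AssumptionA:iii}, by $|\nabla\tilde g_r(s)|+L_{\partial_s g}(|U_r^s|+|\sigma_r^\t V_r^s|+|\Yc_r|+|\sigma_r^\t\Zc_r|)$, producing exactly the terms on the right-hand side of \eqref{Eq:aprioriest:partialUV}.

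Part \ref{Lemma:partialU:iii} is the identification of $\partial U^\cdot$ as a genuine $s$-derivative (antiderivative relation): I would fix $s<T$ and integrate the equation \eqref{Eq:BSDEderivative} for $\partial U^r$ over $r\in[s,T]$, using a stochastic Fubini theorem (Protter's theorem, applicable because $\partial V\in\H^{2,2}$ and $\partial M\in\M^{2,2}$ give the required joint integrability) to interchange $\int_s^T\d r$ with $\int_t^T(\cdot)\d X_r$ and with $\int_t^T\d\partial M_r^r$. The resulting equation for $\int_s^T\partial U^r\d r$ should match, term by term, the difference of the equations for $U^T$ and $U^s$: the terminal condition part is $\int_s^T\partial_s\eta(r)\d r=\eta(T)-\eta(s)$ by the fundamental theorem of calculus in $\Lc^2$ (here I use the differentiability of $s\longmapsto\eta(s)$), and the generator part matches because $\int_s^T\nabla g_r(r,\dots)\d r$ telescopes: by definition $\nabla g_r(\cdot)=\partial_s g_r+\partial_u g_r\,\partial U^\cdot+\sum_i\partial_{v_{:i}}g_r\,\partial V^\cdot_{i:}$ is precisely $\frac{\d}{\d\rho}\big[g_r(\rho,U_r^\rho,V_r^\rho,\Yc_r,\Zc_r)\big]$ once one knows (from uniqueness, via a fixed-point/Gronwall argument on the difference of the two sides) that $\int_s^T\partial U^r\d r=U^T-U^s$ and similarly for $V$ and $M$. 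I would set this up as a self-contained fixed point: define $\widehat U^s:=U^T-\int_s^T\partial U^r\d r$, show it solves the same BSDE as $U^s$, invoke uniqueness. The three identities in $\S^2\times\H^2\times\M^2$ follow simultaneously.

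Finally, part \ref{Lemma:partialU:iv}: granted \ref{Lemma:partialU:iii}, the map $s\longmapsto V^s$ from $[0,T]$ to $\H^2$ is (Lebesgue-a.e., then everywhere by continuity) differentiable with derivative $\partial V^s$, so in particular $V^s=V^t+\int_t^s\partial V^r\d r$ for $s,t\in[0,T]$; evaluating the diagonal, one writes $\Vc_u=V_u^u$ and, for fixed $t$, $\Vc_u=V_u^t+\int_t^u\partial V_u^r\d r$ pointwise in $u$, then applies the elementary inequality \eqref{Eq:ineqsquare} / Young's inequality to $|\sigma_u^\t\Vc_u|^2\le|\sigma_u^\t V_u^t|^2 + (\text{cross terms}) + |\int_t^u\sigma_u^\t\partial V_u^r\d r|^2$ and Cauchy--Schwarz on the $\d r$-integral to reach \eqref{Eq:ineqVtt2:0} after integrating $\d u$ over $[t,T]$; membership $V\in\Ho$, i.e. $\Vc\in\H^2$, then follows by taking $t=0$, expectations, and part \ref{Lemma:partialU:ii} to bound $\E\int_0^T|\sigma_u^\t\partial V_u^r|^2\d u$ uniformly in $r$.

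The main obstacle I anticipate is part \ref{Lemma:partialU:iii}: making the stochastic Fubini interchange rigorous under the general filtration (with the orthogonal martingale parts $\partial M^r$ present, so that $\int_s^T\int_t^T\d\partial M_u^r\,\d r$ must be controlled), and cleanly closing the uniqueness argument that forces $\int_s^T\partial U^r\d r=U^T-U^s$ rather than merely that both sides solve related equations. Everything else is a careful but routine application of standard linear-BSDE estimates together with the structural \Cref{AssumptionA}.
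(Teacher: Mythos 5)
Your treatment of parts \ref{Lemma:partialU:i}, \ref{Lemma:partialU:ii} and \ref{Lemma:partialU:iv} is essentially the paper's own argument: for fixed $s$, \eqref{Eq:BSDEderivative} is an affine Lipschitz BSDE with bounded coefficients $\partial_u g$, $\partial_{v_{:i}}g$ and inhomogeneity $\partial_s g_\cdot(s,U^s,V^s,\Yc,\Zc)\in\L^{1,2}$, continuity in $s$ comes from BSDE stability together with \Cref{AssumptionA}\ref{AssumptionA:i}, the estimate \eqref{Eq:aprioriest:partialUV} comes from It\^o's formula on $\e^{cr}|\partial U^s_r|^2$ with Young's inequality and $c>4L_g$, and \eqref{Eq:ineqVtt2:0} comes from \ref{Lemma:partialU:iii} plus Fubini and Young (your expansion of the square gives a marginally different but equally serviceable bound). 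The issue is part \ref{Lemma:partialU:iii}, where your route diverges from the paper's and, as sketched, does not close.

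Your plan is to integrate \eqref{Eq:BSDEderivative} in $r$ over $[s,T]$ by stochastic Fubini, set $\widehat U^s:=U^T-\int_s^T\partial U^r\,\d r$ (similarly $\widehat V^s,\widehat M^s$), show that $(\widehat U^s,\widehat V^s,\widehat M^s)$ solves the BSDE for $U^s$, and invoke uniqueness. The telescoping this requires is
\begin{equation*}
\int_s^T \nabla g_u\big(r,\partial U^r_u,\partial V^r_u,U^r_u,V^r_u,\Yc_u,\Zc_u\big)\,\d r = g_u(T,U^T_u,V^T_u,\Yc_u,\Zc_u)-g_u(s,\widehat U^s_u,\widehat V^s_u,\Yc_u,\Zc_u),
\end{equation*}
whereas the fundamental theorem of calculus applied to $\rho\longmapsto g_u(\rho,\widehat U^\rho_u,\widehat V^\rho_u,\Yc_u,\Zc_u)$ (which \emph{is} absolutely continuous, by construction of $\widehat U,\widehat V$) yields the same identity with $\nabla g$ evaluated at $(\widehat U^r_u,\widehat V^r_u)$ in its $(u,v)$ slots, not at $(U^r_u,V^r_u)$. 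The residual driver is therefore $\int_s^T\big[\nabla g_u(r,\partial U^r_u,\partial V^r_u,\widehat U^r_u,\widehat V^r_u,\cdot)-\nabla g_u(r,\partial U^r_u,\partial V^r_u,U^r_u,V^r_u,\cdot)\big]\d r$, and its dangerous pieces are $\big[\partial_u g_u(r,\widehat U^r_u,\widehat V^r_u,\cdot)-\partial_u g_u(r,U^r_u,V^r_u,\cdot)\big]\partial U^r_u$ and the analogous terms in $\partial_{v_{:i}}g\,\partial V^r_u$. Under \Cref{AssumptionA} the first derivatives $\partial_u g,\partial_{v_{:i}}g$ are merely bounded by $L_g$ (only $g$ and $\partial_s g$ are assumed Lipschitz in $(u,v,y,z)$), so these pieces are controlled only by $2L_g|\partial U^r_u|$ rather than by the distance $|\widehat U^r_u-U^r_u|+|\sigma_u^\t(\widehat V^r_u-V^r_u)|$, and $\partial U,\partial V$ are unbounded. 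Your Gronwall/fixed-point step therefore cannot absorb the error, and the circularity you flag remains. The paper proceeds differently: it approximates $\int_s^T\partial U^r\d r$ by Riemann sums $I^\ell(\partial U)=\sum_i\Delta s_i^\ell\,\partial U^{s_i^\ell}$ along partitions chosen so that the sums converge to the Bochner integral, notes that $I^\ell(\partial U)-(U^T-U^s)$ solves a BSDE whose data are $I^\ell(\partial_s\eta(\cdot))-(\eta(T)-\eta(s))$ and $I^\ell(\nabla g_\cdot(\cdot))-\delta g_\cdot$, applies a stability estimate, and passes to the limit $\ell\to0$ using the uniform continuity in $s$ of $\partial_s\eta$ and $\partial_s g$ from \Cref{AssumptionA}\ref{AssumptionA:i}, concluding by uniqueness of $(U,V,M)$. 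To salvage your route you would need either an additional modulus of continuity on $\partial_u g,\partial_{v_{:i}}g$ in $(u,v)$, or to recast the antiderivative relation as part of a coupled fixed-point system for $(\partial U,\partial V,\partial M)$ alone; neither is the one-line "invoke uniqueness" of your sketch.
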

\begin{proof}
Note that in light of \Cref{AssumptionA}\ref{AssumptionA:i}, for $(t,s,x ,u, v ,y,z)\in [0,T]^2\times \Xc \times \R^{d_2}\times \R^{d_2\times n}\times \R^{d_1}\times \R^{d_1\times n}$, $({\rm  u} ,{\rm v})\longmapsto \nabla g_t(s,x, {\rm u}, {\rm v} ,u, v ,y,z)$ is linear. Therefore, for any $s\in [0,T]$ the second equation defines a linear BSDE, in $(\partial U^s, \partial V^s)$, whose generator at zero, by \Cref{AssumptionA}\ref{AssumptionA:iii}, is in $\L^{1,2}$. Therefore, its solution $(\partial U^s, \partial V^s, \partial M^s)\in \S^2\times \H^2\times \M^2$ is well--defined from classic results, see for instance \citet*{zhang2017backward} or \cite{el1997backward}. The continuity of the applications $s\longmapsto (\partial U^s,\partial V^s, \partial M^s)$, e.g. $([0,T],\Bc([0,T])) \longrightarrow (\S^{2},\|\cdot \|_{ \S^{2}}): s \longmapsto \partial U^{s}$, follows from the classical stability results of BSDE, and that by assumption $(U, V,Y,Z)\in \L^{2,2}\times \H^{2,2}\times \L^{2}\times \H^{2}$ and $s\longmapsto (\partial_s \eta(s), \nabla \tilde g(s))$ is continuous. This establishes $(\partial U, \partial V, \partial M)\in \S^{2,2}\times \H^{2,2}\times \M^{2,2}$.\medskip

$(ii)$ follows from classic a priori estimates, but when the norms are considered over $[t,T]$ instead of $[0,T]$. Indeed, following the argument in \cite[Proposition 2.1]{el1997backward}, applying It{\=o}'s formula to $\e^{ct}|\partial U_t^s|^2$ we may find $C>0$ such that for any $c> 4 L_{ g} $ and $(s,t)\in [0,T]^2$
\begin{align*}
& \E\bigg[\sup_{r\in [t,T]} \Big\{\e^{cr} \big| \partial U_r^s\big|^2\Big\} +\int_t^T \e^{cr} \big|\sigma^\t_r  \partial V_r^s\big|^2 \d r+\int_t^T \e^{cr-} \d [\partial M^s]_r \bigg]\\
&\;  \leq  C \E\bigg[\e^{cT} \big| \partial_{s} \eta(s)\big|^2+\bigg ( \int_t^T \e^{\frac{c}2 r} \big| \nabla g_t(s, 0, 0,U_r^s, V_r^s ,\Yc_r,\Zc_r) \big| \d r\bigg)^2 \bigg]\\ 
&  =  C \E\bigg[\e^{cT}  \big| \partial_{s} \eta(s)\big|^2+\bigg ( \int_t^T  \e^{\frac{c}2 r} \Big| \partial_s g_r(s,U_r^s,V_r^s,\Yc_r,\Zc_r)\Big| \d r \bigg)^2\bigg]\\
&\; \leq  C \E\bigg[\e^{cT}  \big| \partial_{ s} \eta(s)\big|^2+ \bigg( \int_t^T \e^{\frac{c}2 r} |\nabla \tilde g_r(s)|\d r\bigg)^2  +\int_t^T\e^{c r} \big(|U_t^s|^2 +|\sigma_t^\t V_t^s|^2 +|\Yc_t|^2+|\sigma_t^\t  \Zc_t|^2\big) \d t \bigg],
\end{align*}
where in the second inequality we exploited the fact $(u,v,y,z)\longmapsto \partial_s g (t,s,x,u,v,y,z)$ is Lipschitz, see \Cref{AssumptionA}\ref{AssumptionA:iii}, and $C>0$ was appropriately updated. \medskip

Next, we assume $(iii)$ and show $(iv)$. We know $V\in \H^{2,2}$ we are left to verify $\Vc\in \H^2$. The fact $\Vc$ is well--defined follows from $(iii)$. Indeed, for any $s\in [0,T]$
\[ V^s_t = V^T_t-\int_s^T \partial V^r_t \d r , \; \d t \otimes \d \P\ae \text{ \rm in } [0,T]\times \Xc, \]
which together with the continuity, for any $(t,x)\in [0,T]\times \Xc$, of the mapping $([0,T],\Bc([0,T])) \longrightarrow (\R^{n\timesr d_2}, |\cdot | ): s \longmapsto \int_s^T \partial V^r_t(x) \d r $, shows that
we can define $ V_t^t,\;  \d t\otimes \d \P\ae \text{ \rm in } [0,T]\times \Xc$.\medskip

We now verify \eqref{Eq:ineqVtt2:0}. Indeed, Fubini's theorem and Young's inequality yields that for $\eps>0$ 
\begin{align*}
\int_t^T | \sigma^\t_uV_u^u|^2-|\sigma^\t_u V_u^t|^2 \d u =\int_t^T  \int_t^u  2 \Tr\Big [ {V_u^r}^\t{\sigma_u}   {\sigma^\t_u} \partial V_u^r\Big] \d r \d u & = \int_t^T \int_r^T 2 \Tr\Big [ {V_u^r}^\t{\sigma_u}   {\sigma^\t_u} \partial V_u^r   \Big ] \d u \d r\\
& \leq \int_t^T\int_r^T \eps |\sigma_u^\t  V^r_u|^2+ \eps^{-1} |\sigma_u^\t \partial V^r_u |^2 \d u \d r .
\end{align*} 

Thus $\| \Vc\|_{\H^2}<\infty$ and consequently $V\in \Ho$. This proves $(iv)$. \medskip

We now argue $(iii)$. We know the mapping $[0,T]\ni s\longmapsto (\partial Y^s,\partial Z^s,\partial M^s)$ is continuous, in particular integrable. A formal integration with respect to $s$ to \eqref{Eq:BSDEderivative} leads to
\begin{align*}
\int_s^T\partial U_t^r \d r & = \int_s^T \partial_s \xi(r)\d r + \int_s^T \int_t^T   \partial_s g_u 	(r,U_u^r,V_u^r,\Yc_u,\Zc_u)  + \partial_y g_u 	(r,U_u^r,V_u^r,\Yc_u,\Zc_u) \partial U_u^r \d u \d r \\
&\quad +\int_t^T   \int_s^T \sum_{i=1}^n \partial_{v_i} g_u 	(r,U_u^r,V_u^r,\Yc_u,\Zc_u) (\partial V_u^r)_{: i}  \d u\d r -\int_s^T \int_t^T   {\partial Z_u^r}^\t \d X_u\d r -\int_t^T \int_s^T \d \partial M_u^r\d r.
\end{align*}

Let $(\Pi^\ell)_{\ell}$ be a properly chosen sequence of partitions of $[s,T]$, as in \citet*[Theorem 1]{neerven2002approximating}, $\Pi^\ell=(s_i)_{i=1,\dots,n_\ell}$ with $\| \Pi^\ell\|:=\sup_{i}|s_{i+1}-s_i| \leq \ell$. To ease the notation, we set $\Delta s_i^\ell:=s_i^\ell-s_{i-1}^\ell$, and, for a generic family process $(\phi^s)_{s\in [0,T]}$, and a mapping $s\longmapsto \partial_s \xi(s,x)$, we define 
\begin{align*}
I^\ell( \phi ):=\sum_{i=0}^{n_\ell} \Delta s_i^\ell  \phi^{s_i^\ell}, \; \delta \phi:=\phi^T-\phi^s,\; I^{\ell}(\partial_s \xi(\cdot,x)):=\sum_{i=0}^n \Delta s_i^\ell \partial_s \xi(s_i^\ell,x),  \; \delta \xi(x) := \xi(T,x)-\xi(s,x).
\end{align*}
We then notice that for any $t\in[0,T]$
\begin{align*}
I^\ell(\partial U)_t-(\delta U)_t  & =   I^\ell(\partial_s \xi(\cdot ))-\delta \xi + \int_t^T \big[I^\ell\big(\partial_s g_u(\cdot,U^\cdot_u, V^\cdot_u, \Yc_u,\Zc_u)\big )+I^\ell\big(\partial_y g_u(\cdot,U^\cdot_u, V^\cdot_u, \Yc_u,\Zc_u)\partial U^\cdot_u \big ) \big]\d u \\
&\quad  +\int_t^T \big[I^\ell\big(\partial_z g_u(\cdot,U^\cdot_u, V^\cdot_u, \Yc_u,\Zc_u)\partial V^\cdot_u\big ) -\delta g_u(\cdot,U^\cdot_u, V^\cdot_u, \Yc_u,\Zc_u) \big]\d u\\
&\quad -\int_t^T\big(I^\ell(\partial V)_r-(\delta V)_r\big)^\t\d X_u -I^\ell(\partial M_T^\cdot-\partial M_t^\cdot)-\delta (M_T-M_t).
\end{align*}
We now note that the integrability of $(\partial U, \partial V)$ and $(U, V)$ yields $I^\ell(\partial U)-(\delta U)\in \L^{2,2}$ and $I^\ell(\partial V)-(\delta V)\in \H^{2,2}$. Therefore, \citet*[Theorem 2.2]{bouchard2018unified} yields
\begin{align*}
&\|I^\ell(\partial U)-(\delta U) \|_{\L^{2,2}}^2 + \|I^\ell(\partial V)-(\delta V) \|_{\H^{2,2}}^2+ \|I^\ell(\partial M)-(\delta V) \|_{\M^{2,2}}^2 \\
&\; \leq C  \E \bigg[ \big| I^\ell(\partial_s \xi(\cdot ))-\delta \xi  \big|^2 +\bigg( \int_t^T \Big|I^\ell\big(\partial_s g_u(\cdot,U^\cdot_u, V^\cdot_u, \Yc_u,\Zc_u)\big ) -\delta g_u(\cdot,U^\cdot_u, V^\cdot_u, \Yc_u,\Zc_u) \Big| \d u \bigg)^2\bigg].
\end{align*}
To conclude, we first note that by our choice of $(\Pi^\ell)_\ell$, $I^\ell(\partial U)$ converges to the Lebesgue integral of $U^s$. In addition, the uniform continuity of $s\longmapsto \partial_s \xi(s,x)$ and $s\longmapsto\partial_s g(s,x,u,v,y,z)$, see \Cref{AssumptionA}\ref{AssumptionA:i}, justifies, via bounded convergence, the convergence in $\L^{2,2} $ (resp. $\H^{2,2}$) of $I^\ell(\partial U^s)$ to $U^T-U^s$ (resp. $I^\ell(\partial V^s)$ to $V^T- V^s$) as $\ell \longrightarrow 0$. The result follows in virtue of the uniqueness of $(U,V,M)$.
\end{proof}

The next lemma identifies the dynamics of $(U_t^t)_{t\in[0,T]}$.

\begin{lemma}\label{Lemma:intdiagpartialU}
Let $(\Yc,\Zc)\in \L^2\times \H^2$ and $(U,\partial U, V,\partial V,M, \partial M)\in (\L^{2,2})^2 \times \Ho \times \H^{2,2} \times (\M^{2,2})^2 $ be a solution to 
\begin{align*}
\begin{split}
U_t^s&=   \eta (s,X_{\cdot\wedge T})+\int_t^T  g_r(s,X,U_r^s,V_r^s, \Yc_r, \Zc_r) \d r-\int_t^T {V_r^s}^\t  \d X_r-\int_t^T \d M^s_r,\; (s,t)\in[0,T]^2,  \\
\partial U_t^s&=  \partial_s \eta (s,X_{\cdot\wedge T})+\int_t^T  \nabla g_r(s,X,\partial U_r^s,\partial V_r^s,U_r^s,V_r^s, \Yc_r, \Zc_r) \d r-\int_t^T\partial  {V_r^s}^\t  \d X_r-\int_t^T \d \partial M^s_r,\; (s,t)\in[0,T]^2.
\end{split}
\end{align*}
Then, 
\begin{align*}
U_t^t=U_T^T+\int_t^T \big( g_r(r,U_r^r,V_r^r,\Yc_r,\Zc_r)-\partial U_r^r\big) \d r -\int_t^T V_r^r  \d X_r-\int_t^T \d \widetilde M_r,\ t\in [0,T],\ \P\as
\end{align*}
where $\widetilde M :=(\widetilde M_t)_{t\in [0,T]}$ is given by $\widetilde M_t := M_t^t-\int_0^t \partial M_r^r \d r$. Moreover, $\widetilde M\in \M^2$.
\end{lemma}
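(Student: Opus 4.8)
The plan is to establish the claimed dynamics for $(U_t^t)_{t\in[0,T]}$ by approximating the diagonal along a suitable sequence of partitions of $[0,T]$ and passing to the limit, exactly in the spirit of the proof of \Cref{Lemma:partialU}\ref{Lemma:partialU:iii}. First I would fix $t\in[0,T]$ and a time $s\le t$. The key algebraic identity is that, for $r\in[t,T]$, one can write $U_r^r - U_r^t = \int_t^r \partial U_r^\theta\,\d\theta$ once we use \eqref{Eq:constraintspace} (which holds here since the hypotheses of \Cref{Lemma:partialU} are in force), and similarly for $V$ and $M$; more precisely, using the representation of $\partial U^\theta$ given by \eqref{Eq:BSDEderivative} one integrates that equation in $\theta$ over $[t,T]$ and combines it with the equation for $U^t$ evaluated between $t$ and $T$. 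The upshot is an expression for $U_T^T - U_t^t$ as a sum of a Lebesgue integral term, a $\d X$-integral term with integrand $V^\cdot$, and a martingale term, but where every ``diagonal'' object $U_r^r$, $V_r^r$, $\partial U_r^r$ appears as a limit of Riemann-type sums $I^\ell(\cdot)_r$ along partitions $\Pi^\ell$ of $[0,T]$ chosen as in \citet*[Theorem 1]{neerven2002approximating}.

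Concretely, I would introduce, for a partition $\Pi^\ell=(s_i^\ell)$, the discretised process $U^{\ell}_t := \sum_i \1_{[s_{i-1}^\ell,s_i^\ell)}(t)\, U_t^{s_{i-1}^\ell}$ and likewise $V^\ell$, $M^\ell$; on each sub-interval $[s_{i-1}^\ell,s_i^\ell)$ the triple $(U^{s_{i-1}^\ell},V^{s_{i-1}^\ell},M^{s_{i-1}^\ell})$ solves the first BSDE with parameter frozen at $s=s_{i-1}^\ell$, and using \eqref{Eq:BSDEderivative} together with the fundamental-theorem-of-calculus identity above (valid in $\L^{2,2}\times\H^{2,2}\times\M^{2,2}$ by \Cref{Lemma:partialU}\ref{Lemma:partialU:iii}) I can write the telescoping difference $U_{s_i^\ell}^{s_i^\ell}$-to-$U_{s_{i-1}^\ell}^{s_{i-1}^\ell}$ type relation and sum it. The stability estimate of \citet*[Theorem 2.2]{bouchard2018unified} then controls, uniformly in $\ell$, the $\S^2$, $\H^2$ and $\M^2$ distances between the discretised objects and their continuous diagonal counterparts in terms of the (vanishing, by uniform continuity of $s\mapsto(\eta(s),g(s,\cdot),\partial_s\eta(s),\nabla g(s,\cdot))$ from \Cref{AssumptionA}\ref{AssumptionA:i}) mesh-dependent error; here the membership $V\in\Ho$, i.e. $\Vc=(V_t^t)\in\H^2$, proved in \Cref{Lemma:partialU}\ref{Lemma:partialU:iv}, is exactly what guarantees the stochastic integral $\int_t^T V_r^r\,\d X_r$ is well-defined and that the approximation $\int V^\ell\,\d X \to \int \Vc\,\d X$ holds in the appropriate sense.

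Passing to the limit $\ell\to0$ then yields
\[
U_t^t = U_T^T + \int_t^T\big(g_r(r,U_r^r,V_r^r,\Yc_r,\Zc_r) - \partial U_r^r\big)\,\d r - \int_t^T V_r^r\,\d X_r - \int_t^T \d\widetilde M_r,
\]
where $\widetilde M_t := M_t^t - \int_0^t \partial M_r^r\,\d r$; the $-\partial U_r^r$ term appears precisely because the boundary contribution from integrating $\partial U^\theta$ in $\theta$ over the diagonal produces $\int_t^T\partial U_r^r\,\d r$ with a sign, and the $g_r(r,\cdot)$ term (rather than $\nabla g_r$) comes from the generator of the first, non-differentiated, BSDE. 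Finally I would check $\widetilde M\in\M^2$: $M^\cdot\in\M^{2,2}$ gives $(M_t^t)$ well-defined with $(M_t^t)$ a local martingale orthogonal to $X$ with finite $\M^2$ norm by the same diagonal argument applied to the martingale part, while $\partial M^\cdot\in\M^{2,2}$ makes $t\mapsto\int_0^t\partial M_r^r\,\d r$ a finite-variation, $\P$-orthogonal-to-$X$ process with the required integrability; $\P$-orthogonality to $X$ of $\widetilde M$ is inherited termwise, and $\widetilde M_0=0$.

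The main obstacle I anticipate is making rigorous the interchange of the $\d\theta$-integration with the stochastic integral $\int \d X$ and with the martingale differential $\d M^\theta$ when collapsing to the diagonal — this is a stochastic-Fubini step that is only licit because of the specific integrability built into $\Ho$ and $\M^{2,2}$, and getting the error terms to vanish requires the careful choice of the partition sequence from \citet*{neerven2002approximating} together with the uniform-in-parameter continuity from \Cref{AssumptionA}\ref{AssumptionA:i}; everything else is bookkeeping modelled on the proof of \Cref{Lemma:partialU}.
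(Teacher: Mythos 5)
Your derivation of the dynamics of $U_t^t$ is essentially the paper's argument: both rest on expressing $\partial U_r^r$ through \eqref{Eq:BSDEderivative}, integrating in the parameter over $[t,T]$, exchanging the order of integration (deterministic Fubini together with the chain rule for the $\nabla g$ term, stochastic Fubini for the $\partial V$ term), and then absorbing the off--diagonal remainder with the equation for $U^t$. The paper does this directly, invoking \Cref{Lemma:partialU}\ref{Lemma:partialU:iii} and \cite[Theorems 54 and 65]{protter2005stochastic}, with no partition approximation; your additional discretisation layer along the partitions of \cite{neerven2002approximating} is not wrong, but it is redundant here, since the fundamental--theorem identity you need is already available in $\L^{2,2}\times\H^{2,2}\times\M^{2,2}$ from \Cref{Lemma:partialU}\ref{Lemma:partialU:iii}, and it introduces convergence arguments (for the Riemann sums of the generator and of the stochastic integrals) that you do not actually carry out.

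The genuine gap is in the last step, the claim that $\widetilde M\in\M^2$. You assert that $(M_t^t)_{t\in[0,T]}$ is a local martingale orthogonal to $X$, that $t\longmapsto\int_0^t\partial M_r^r\,\d r$ is a finite--variation process with the right integrability, and that the properties of $\widetilde M$ are ``inherited termwise''. This is false: $(M_t^t)_{t\in[0,T]}$ is \emph{not} a (local) martingale. Indeed, from $M_u^t=M_u^u+\int_u^t\partial M_u^r\,\d r$, which is exactly \Cref{Lemma:partialU}\ref{Lemma:partialU:iii} applied to the martingale component, one gets for $0\le u\le t\le T$ that $\E\big[M_t^t\,\big|\,\Fc_u\big]=M_u^u+\int_u^t\partial M_u^r\,\d r$, which differs from $M_u^u$ in general; the entire purpose of subtracting $\int_0^t\partial M_r^r\,\d r$ is that its conditional increment $\E\big[\int_u^t\partial M_r^r\,\d r\,\big|\,\Fc_u\big]=\int_u^t\partial M_u^r\,\d r$ precisely compensates this drift. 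Had your decomposition been correct, a local martingale minus a non--trivial finite--variation process could not again be a martingale, so the conclusion would in fact contradict your premises. The martingale property and the orthogonality to $X$ of $\widetilde M$ therefore require the explicit conditional--expectation computations based on the identity above (and its analogue after multiplication by $X_t$), as carried out in the paper; they do not follow termwise from the properties of $M^s$ and $\partial M^s$ alone.
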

\begin{proof}
We show that $\P\as$, for any $t\in [0,T]$
\begin{align*}
\int_t^T \partial U_r^r \d r= U_T^T-U_t^t+\int_t^T g_r(r,U_r^r,V_r^r,\Yc_r,\Zc_r)\d r -\int_t^T V_r^r \d X_r-\int_t^T \d \widetilde M_r.
\end{align*}
Indeed, note that, $\P\as$, for any $t\in [0,T]$
\begin{align*}
\int_t^T \partial U_r^r \d r=& \int_t^T \partial_s \eta (r) \d r + \int_t^T\bigg( \int_r^T \nabla g_u(r,\partial U_u^r, \partial V_u^r , U_u^r, V_u^r,\Yc_u,\Zc_u)\d u -\int_r^T \partial {V_u^r}^\t  \d X_u - \int_r^T \d \partial M_u^r \bigg) \d r.
\end{align*}

By Fubini's theorem, the change of variables formula for the Lebesgue integral, \cite[Theorem 54]{protter2005stochastic}, and \Cref{Lemma:partialU} we have that, $\P\as$, for any $t\in [0,T]$
\begin{align*}
\int_t^T \int_r^T \nabla g_u(r,\partial U_u^r, \partial V_u^r, U_u^r,V_u^r,\Yc_u,\Zc_u)\d u \d r&= \int_t^T \int_t^u \nabla g_u(r,\partial U_u^r, \partial V_u^r, U_u^r, V_u^r,\Yc_u,\Zc_u)\d r \d u\\
&= \int_t^T   g_u(u,U_u^u,V_u^u,\Yc_u,\Zc_u)-  g_u(t, U_u^t,V_u^t,\Yc_u,\Zc_u) \d u.
\end{align*}

Similarly, given $\partial V\in \H^{2,2}$, the version of Fubini's theorem for stochastic integration, see \cite[Theorem 65]{protter2005stochastic}, yields
\begin{align*}
 \int_t^T \int_r^T {\partial V_u^r}^\t  \d X_u \d r=\int_t^T \int_t^u \partial {V_u^r}^\t \d r \d X_u  =\int_t^T  \big( V_u^u-V_u^t)^\t  \d X_u \d r.
\end{align*}
Now, by \Cref{Lemma:partialU} we have that, $\P\as$, for any $t\in [0,T]$
\[ \int_t^T \int_r^T \d \partial M_u^r \d r= \int_t^T\big( \partial M_T^r-\partial M_r^r \big)\d r=\widetilde M_T-\widetilde M_t +M_t^t-M^t_T. \] 
We are left to verify $\widetilde M\in \M^2$. Indeed, note that $\widetilde M_0=0$ and

\begin{enumerate}[label=$(\roman*)$, ref=.$(\roman*)$,wide, labelwidth=!, labelindent=0pt]
\item $\widetilde M$ has c\`adl\`ag paths. This follows from the fact $(M_t^t)_{t\in [0,T]}$ has c\`adl\`ag path. Indeed for $t\in [0,T]$
\begin{align*}
M_t^t= M_t^T-\int_t^T \partial M_t^r\d r, \; t\in [0,T];
\end{align*} \vspace{-1.5em}
\item $\widetilde M$ is a martingale. Indeed by \Cref{Lemma:partialU}, for $0\leq u\leq t\leq T$, $\P\as$
$$ \displaystyle  \E\big[ \widetilde M_t  | \Fc_u\big] =\E \big[M_t^t  | \Fc_u\big]-\int_0^u \partial M_r^r \d r- \int_u^t \E\big[ \partial M_r^r\big| \Fc_u\big] \d r= M_u^t -\int_0^u \partial M_r^r \d r- \int_u^t \partial M_u^r \d r=\widetilde M_u  ;$$ \vspace{-1.5em}
\item $\widetilde M$ is orthogonal to $X$. For $0\leq u\leq t\leq T$, $\P\as$
\begin{align*}
\E \big[ X_t \widetilde M_t \big|\Fc_u\big] & ={\color{black} \E\big[ X_tM_t^t\big|\Fc_u\big]}-\E\bigg[ X_t \int_0^u \partial M_r^r \d r\Big|\Fc_u\bigg] -\int_u^t \E\big[ \E\big[  X_t|\Fc_r\big] \partial M_r^r|\Fc_u\big] \d r\\
& = X_uM_u^t-X_u \int_0^u   \partial M_r^r  \d r -X_u \int_u^t  \partial M_u^r \d r\\
& = X_uM_u^t-X_u \int_0^u  \partial M_r^r  \d r -  X_u  M_u^t+X_u M_u^u\\
&= X_u\widetilde M_u.
\end{align*}
where in the second equality we used the fact $\int_0^u \partial M_r^r \d r$ is $\Fc_u$--measurable, the tower property and the orthogonality of {\color{black} $M^s$ and $X$} and of $\partial M^s$ and $X$ for $s\in [0,T]$. The third inequality follows from \Cref{Lemma:partialU}.
\item $\|\widetilde M\|_{\M^2}<\infty$,
\begin{align*}
\| \widetilde M\|_{\M^2}^2  =\E \bigg[ \bigg( M_T^T -\int_0^T \partial M_r^r \d r\bigg)^{\! 2}\bigg]\leq 2\Bigg( \E \big[[M^T]_T\big]+T\int_0^T \E\big[ [\partial M^r]_r\big]\d r\bigg)\leq 2\Big[ \|M^T\|^2_{\M^2}+T^2 \| \partial M\|_{\M^{2,2}}^2 \Big]<\infty.
\end{align*} 
\end{enumerate}\vspace{-1.5em}
\end{proof}
\subsection{{\it A priori} estimates} 
We now establish \emph{a priori} estimates for \eqref{Eq:systemBSDE}. To ease the readability let us recall
\begin{align*}\label{Eq:systemBSDE2}\tag{$\Sc$}
\begin{split}
\Yc_t&=\xi(T,X_{\cdot\wedge T})+\int_t^T h_r(X,\Yc_r,\Zc_r, U_r^r,V_r^r,\partial U_r^r )\d r-\int_t^T  \Zc_r^\t \d X_r-\int_t^T \d N_r,\; t \in[0,T], \\
U_t^s&=   \eta (s,X_{\cdot\wedge T})+\int_t^T  g_r(s,X,U_r^s,V_r^s, \Yc_r, \Zc_r) \d r-\int_t^T {V_r^s}^\t  \d X_r-\int_t^T \d M^s_r,\; (s,t)\in[0,T]^2,  \\
\partial U_t^s&=  \partial_s \eta (s,X_{\cdot\wedge T})+\int_t^T  \nabla g_r(s,X,\partial U_r^s,\partial V_r^s,U_r^s,V_r^s, \Yc_r, \Zc_r) \d r-\int_t^T\partial  {V_r^s}^\t  \d X_r-\int_t^T \d \partial M^s_r,\; (s,t)\in[0,T]^2.
\end{split}
\end{align*}

Let us introduce $(\Hc^o,\|\cdot \|_{\Hc^o})$ and $(\Hf^o,\|\cdot \|_{\Hf^o})$ where $\|\cdot \|_{\Hc^o}$ and $\|\cdot\|_{\Hf^o}$ denote the norms induced by
\begin{gather*}
 \Hc^o:=\L^2 \times \H^2 \times  {\M^2} \times \L^{2,2} \times \Ho \times {\M^{2,2}} \times\L^{2,2}  \times \H^{2,2} \times {\M^{2,2}} , \; 
  \Hf^o:=\L^2 \times \H^2 \times  {\M^2} \times \big(\L^{2,2} \times \H^{2,2}  \times {\M^{2,2}}\big)^2 ,
\end{gather*}
 
To obtain estimates between the difference of solutions, it is more convenient to work with norms defined by adding exponential weights. We recall, for instance, that for $c\in [0,\infty)$ the norm $\|\cdot \|_{\H^{2,c}}$ is given by 
\[
\|\Zc\|_{\H^{2,c}}^2=\E \bigg[  \int_0^T  \e^{ct}|\sigma^\t_t\Zc _t^s|^2 \d t  \bigg],
\]
and they are equivalent for any two values of $c$, since $[0,T]$ is compact. With this, we define the norm $\|\cdot \|_{\Hc^{o,c}}$. In the following, we take the customary approach of introducing arbitrary constants $C>0$ to our analysis. These constants will typically depend on the data of the problem, e.g. the Lipschitz constants and $T$ and on the value of $c$ unless otherwise stated.  \medskip

\begin{proposition}\label{Prop:aprioriestimates}
Let {\rm \Cref{AssumptionA}} hold and $(\Yc,\Zc,\Nc,U,V,M,\partial U,\partial V,\partial M)\in  \Hc^o$ satisfy \eqref{Eq:systemBSDE}. Then $(\Yc,U,\partial U) \in \S^2 \times \S^{2,2}\times \S^{2,2}$. Furthermore, there exists a constant $C>0$ such that for $\|\cdot \|_{\Hc}$ as in {\rm \Cref{Section:infdimsystem}}
\begin{align*}
\|(\Yc,\Zc,\Nc,U, V,M,\partial U,\partial V,\partial M) \|^2_{{\Hc}} \leq C  \underbrace{ \Big(  \|\xi \|^2_{\Lc^2}  + \|\eta \|^2_{\Lc^{2,2}}+\| \partial_s \eta \|^2_{\Lc^{2,2}} +\|\tilde h\|_{\L^{1,2}}^2 +\| \tilde g \|_{\L^{1,2,2}}^2+\| \nabla \tilde g \|_{\L^{1,2,2}}^2 \Big)}_{=:I_0^2} <\infty.
\end{align*}
\end{proposition}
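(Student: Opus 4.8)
The plan is to establish the a priori estimate in several stages, processing the three equations of \eqref{Eq:systemBSDE} in a specific order that respects the coupling structure. The key observation is that although the system is fully coupled, the generator of the first BSDE depends on $(U,V,\partial U)$ only through their diagonals $(U_r^r,V_r^r,\partial U_r^r)$, so the natural strategy is: first bound the $\partial U$--block, then the $U$--block, then feed the resulting diagonal estimates into the first BSDE. First I would apply \Cref{Lemma:partialU} to the pair $(U,V,M)$ appearing in the second equation; this gives the existence and dynamics of $(\partial U,\partial V,\partial M)$, the inclusion $V\in\Ho$, and crucially the pointwise (in $t$) estimate \eqref{Eq:aprioriest:partialUV} controlling $\E[\int_t^T \e^{cr}|\sigma_r^\t\partial V_r^s|^2\d r]$ in terms of $\|\partial_s\eta(s)\|$, $\|\nabla\tilde g(s)\|$, and the norms of $U^s,V^s,\Yc,\Zc$ over $[t,T]$. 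Combined with classical BSDE a priori estimates applied to the linear BSDE for $\partial U^s$ (Assumption~\ref{AssumptionA}\ref{AssumptionA:i},\ref{AssumptionA:iii}), this yields, for each fixed $s$, a bound of $\|\partial U^s\|_{\S^2}^2+\|\partial V^s\|_{\H^2}^2+\|\partial M^s\|_{\M^2}^2$ by $I_0^2$ plus $\sup_s(\|U^s\|_{\S^2}^2+\|V^s\|_{\H^2}^2)+\|\Yc\|_{\S^2}^2+\|\Zc\|_{\H^2}^2$; taking the supremum over $s$ gives the $\partial U$--block estimate.

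Next I would run the standard $\S^2$--type a priori estimate for the second BSDE in \eqref{Eq:systemBSDE} (the $U^s$--equation), using Assumption~\ref{AssumptionA}\ref{AssumptionA:iii} for the Lipschitz property of $g$ and It\^o's formula applied to $\e^{ct}|U_t^s|^2$ as in \cite[Proposition 2.1]{el1997backward}, to obtain
\[
\|U^s\|_{\S^2}^2+\|V^s\|_{\H^2}^2+\|M^s\|_{\M^2}^2 \le C\Big(\|\eta(s)\|_{\Lc^2}^2+\|\tilde g(s)\|_{\L^{1,2}}^2+\|\Yc\|_{\S^2}^2+\|\Zc\|_{\H^2}^2\Big),
\]
and then take the supremum over $s\in[0,T]$ to get the $U$--block controlled by $I_0^2+\|\Yc\|_{\S^2}^2+\|\Zc\|_{\H^2}^2$. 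At this point both off--diagonal blocks are bounded by $I_0^2$ plus the $(\Yc,\Zc)$--norms. For the diagonal processes I would invoke \Cref{Lemma:intdiagpartialU}, which identifies the dynamics of $(U_t^t)_{t\in[0,T]}$ with driver $g_r(r,U_r^r,V_r^r,\Yc_r,\Zc_r)-\partial U_r^r$ and orthogonal martingale $\widetilde M$, together with \eqref{Eq:ineqVtt2:0} from \Cref{Lemma:partialU}\ref{Lemma:partialU:iv} to control $\|\Vc\|_{\H^2}^2=\|(V_t^t)\|_{\H^2}^2$ by $\|V^t\|_{\H^2}^2$ plus a double integral in $\|V^r\|_{\H^2}^2+\|\partial V^r\|_{\H^2}^2$, hence by the already--obtained bounds; a parallel argument using the pathwise continuity of $U^s,\partial U^s$ (they live in $\S^{2,2}$) bounds $\|(U_t^t)\|_{\L^2}^2$ and $\|(\partial U_t^t)\|_{\L^2}^2$.

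Finally, with the diagonals $(U_r^r,V_r^r,\partial U_r^r)$ controlled, I would apply the standard BSDE a priori estimate to the first equation of \eqref{Eq:systemBSDE}, using the Lipschitz property Assumption~\ref{AssumptionA}\ref{AssumptionA:ii} and It\^o on $\e^{ct}|\Yc_t|^2$: the generator contributes $\|\tilde h\|_{\L^{1,2}}^2$ at zero plus Lipschitz terms $\int_t^T(|U_r^r|^2+|\sigma_r^\t V_r^r|^2+|\partial U_r^r|^2)\d r$, which are dominated by the diagonal bounds. This gives
\[
\|\Yc\|_{\S^2}^2+\|\Zc\|_{\H^2}^2+\|\Nc\|_{\M^2}^2 \le C\big(I_0^2 + \text{(already bounded diagonal terms)}\big).
\]
The main obstacle is the circularity: every block estimate above still contains $\|\Yc\|_{\S^2}^2+\|\Zc\|_{\H^2}^2$ on the right. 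The way I would close the loop is to carry out the whole chain with the exponentially weighted norms $\|\cdot\|_{\Hc^{o,c}}$ and track the constants' dependence on $c$; the Young--inequality cross terms and the $\int_t^T\e^{cr}(\cdots)\d r$ contributions coming through \eqref{Eq:aprioriest:partialUV} and the diagonal estimates can be absorbed into the left-hand side by choosing $c$ large enough (and using that a factor $\e^{-cT}$ or a small $\eps$ multiplies the offending terms), exactly the Gronwall-type device standard for such coupled systems. Once the $(\Yc,\Zc,\Nc)$--norms are absorbed, substituting back upward through the $U$-- and $\partial U$--blocks yields the stated estimate with a single constant $C$ depending only on the Lipschitz constants, $T$, $c$, $n$; finiteness of $I_0^2$ is immediate from Assumption~\ref{AssumptionA}\ref{AssumptionA:iv}. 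That $(\Yc,U,\partial U)\in\S^2\times\S^{2,2}\times\S^{2,2}$ is a byproduct, since each $\S^2$/$\S^{2,2}$ norm was shown finite along the way.
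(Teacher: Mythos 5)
Your proposal is correct and follows essentially the same route as the paper: the same key ingredients (\Cref{Lemma:partialU}, \Cref{Lemma:intdiagpartialU}, the pointwise--in--$t$ diagonal estimates, and the closing of the circular dependency via exponential weights together with Young's inequality placing a small parameter on the diagonal terms entering the first equation). The only difference is organizational --- you process the blocks sequentially, whereas the paper first derives the diagonal estimates, then upgrades to $\S^2$/$\S^{2,2}$ via a crude Doob--based bound, and finally applies It\^o's formula to the sum $\e^{ct}\big(|\Yc_t|^2+|U_t^s|^2+|\partial U_t^s|^2\big)$ simultaneously --- which does not affect the substance of the argument.
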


\begin{proof}
We proceed in several steps. We recall that in light of \Cref{AssumptionA}, $\d t \otimes\d \P\ae$
\begin{align}\label{Eq:Lipasshg}
|  h_r(\Yc_r,\Zc_r,U_r^r, V^r_r,\partial U_r^r))|& \leq |  \tilde h|+L_{  h} \big(|\Yc_r|+|\sigma_r^\t \Zc_r|+ |U_r^r|+|\sigma_r^\t V_r^r|+|\partial U_r^r| \big),\notag \\
|  g_r(s,U_r^s, V^s_r,\Yc_r,\Zc_r))|& \leq |  \tilde g(s)|+L_{  g} \big(|U_r^s|+|\sigma_r^\t V_r^s|+|\Yc_r|+|\sigma_r^\t \Zc_r|\big),\\
|  \nabla g_r(s,\partial U_r^s, \partial V^s_r,U_r^s, V^s_r,\Yc_r,\Zc_r))|& \leq |  \nabla \tilde g_r(s)|+ L_{\partial_s g} \big( |U_r^s|+|\sigma_r^\t V_r^s|+|\Yc_r|+|\sigma_r^\t \Zc_r|\big) +L_{  g} \big(|\partial U_r^s|+|\sigma_r^\t \partial V_r^s| \big).\notag  
\end{align}

\textbf{Step $1$:} we start with auxiliary estimates. By Meyer--It\=o's formula for $\e^{\frac{c}2  t} |\partial U_t^s|$, see \citet*[Theorem 70]{protter2005stochastic}
\begin{align}\label{eq:eq1}
\begin{split}
&\e^{\frac{c}2  t}|\partial U_t^s|+ L_T^0 -\int_t^T \e^{\frac{c}2  r} \sgn( \partial U_r^s)\cdot  \partial {V_r^s}^\t  \d X_r -\int_t^T \e^{\frac{c}2  r-} \sgn( \partial U_{r-}^s)\cdot  \d \partial M_r^s \\
&\; =\e^{\frac{c}2  T}  |\partial_s \eta (s)|  + \int_t^T  \e^{\frac{c}2  r} \bigg(  \sgn(  \partial U_r^s)  \cdot \nabla g_r(s,\partial U_r^s,\partial V_r^s,U_r^s,V_r^s,\Yc_r,\Zc_r)-\frac{c}2  |\partial U_r^s| \bigg) \d r ,\; t\in[0,T],
\end{split}
\end{align} 
where $L^0:=L^0(\partial U^s)$ denotes the non--decreasing and pathwise--continuous local time of the semi--martingale $\partial U^s$ at $0$, see \cite[Chapter IV, pp. 216]{protter2005stochastic}. We also notice that for any $s\in [0,T]$ the last two terms on the left--hand side are martingales, recall that $\partial V^s\in \H^2$.\medskip

We now take conditional expectation with respect to $\Fc_t$ in \Cref{eq:eq1}. We may use \eqref{Eq:Lipasshg} and the fact $ L^0$ is non--decreasing to derive that for $t\in[0,T]$ and $c>2 L_{g}$
\begin{align}\label{Eq:ineqUst}
\e^{\frac{c}2 t}| \partial  U_t^s| & \leq  \E_t \bigg[ \e^{\frac{c}2 T} |\partial_s  \eta (s)|+\int_t^T \e^{\frac{c}2 r} \Big(|\partial  U_r^s| (L_{  g}-c/2)+ | \nabla  \tilde g_r(s)| + L_{  g}  |\sigma^\t_r   \partial V_r^s| \Big)  \d r \notag \\
& \hspace{3em} +\int_t^T  \e^{\frac{c}2 r} L_{ \partial_s g}\big ( |U_r^s|+ |\sigma^\t_r   V_r^s| + |\Yc_r|+ |\sigma^\t_r  \Zc_r|\big) \d r\bigg] \notag \\
& \leq  \E_t \bigg[ \e^{\frac{c}2 T} |\partial_s \eta (s)|+\int_t^T \e^{\frac{c}2 r} \Big(  | \nabla \tilde g_r(s)| +\bar L \big ( |\sigma^\t_r   \partial V_r^s|+|U_r^s|+ |\sigma^\t_r   V_r^s| + |\Yc_r|+ |\sigma^\t_r  \Zc_r|\big) \Big)  \d r \bigg].
\end{align}

where $\bar L :=\max\{ L_g,L_{\partial_s g}\}$. Squaring in \eqref{Eq:ineqUst}, we may use \eqref{Eq:ineqsquare} and Jensen's inequality to derive for $ t\in[0,T]$
\begin{align*}
\frac{\e^{ct}}{7} |\partial U_t^t|^2 & \leq   \E_t\bigg[ \e^{cT} |\partial_s \eta (t)|^2+ T \bar  L ^2 \int_t^T \e^{c r}   \big( |U_r^t|^2+|\Yc_r|^2 + |\sigma^\t_r   \partial V_r^t|^2 +|\sigma^\t_r V_r^t|^2 + |\sigma^\t_r\Zc_r|^2\big) \d r \bigg]\\
&\quad +\E_t\bigg[\bigg(\int_t^T \e^{\frac{c}2 r} | \nabla \tilde g_r(t)|\d r\bigg)^2\bigg].
\end{align*}

Integrating the previous expression and taking expectation, it follows from the tower property that for any $t\in[0,T]$
\begin{align*}
\frac{  1}7\E\bigg[\int_t^T \e^{cr}|\partial U_r^r|^2\d r\bigg]\leq &\ \E\bigg[ \int_t^T   \e^{cT} |\partial_s \eta (r)|^2\d r\bigg]+\E \bigg[ \int_t^T  \bigg(  \int_r^T \e^{\frac{c}2 u}| \nabla \tilde g_u(r)|\d u\bigg)^2    \d r\bigg] \\
& + T \bar L^2 \E \bigg[ \int_t^T  \int_r^T \e^{c u}   \big( |U_u^r|^2 + |\Yc_u|^2 + |\sigma^\t_u \partial V_u^r|^2+ |\sigma^\t_u V_u^r|^2 + |\sigma^\t_u\Zc_u|^2\big) \d u \d r\bigg]\\
\leq &\ T \sup_{r\in [0,T]} \bigg\{ \|  \e^{cT} |\partial_s \eta(r)|^2]\|_{\Lc^2}+  \E \bigg[ \bigg( \int_t^T \e^{\frac{c}2 u} | \nabla \tilde g_u(r)|\d u \bigg)^2 \bigg]    \bigg\}   \\
& + T^2 \bar L^2   \sup_{r\in [0,T]} \bigg\{ \E\bigg[   \int_t^T \e^{c u} \big( |U_u^r|^2 +|\sigma^\t_u V_u^r|^2+|\sigma^\t_u \partial V_u^r|^2\big) \d u\bigg] \bigg\}  \\
& +    T^2 \bar L^2 \E \bigg [ \int_t^T \e^{c u}   \big( |\Yc_u|^2 + |\sigma^\t_u\Zc_u|^2\big) \d u \bigg].
\end{align*}

Thus, we obtain there is $C_{\partial u}>0$ such that for any $c>2L_{ g}$ and $t\in[0,T]$
\begin{align}\label{Eq:ineqpartialUtt2}\begin{split}
 \frac{1}{C_{\partial u} } \E\bigg[ \int_t^T  \e^{cr}  |\partial U_r^r|^2  \d r\bigg] &  \leq  \e^{cT}\big(   \|\partial_s \eta \|_{\Lc^{2,2}}^2+ \| \nabla \tilde g\|^2_{\L^{1,2,2}} \big)  +  \E\bigg[   \int_t^T \e^{cr} \big( |\Yc_r|^2 + |\sigma^\t_r\Zc_r|^2\big)\d r\bigg]  \\
 & \quad + \sup_{s\in [0,T]} \E\bigg[ \int_t^T \e^{c r} \big(   |U_r^s|^2+ |\sigma_r^\t V_r^s|^2+  |\sigma_r^\t \partial V_r^s|^2\big) \d r\bigg].
\end{split}
\end{align}

Similarly, we may find $C_u>0$ such that for any $c>2L_{ g}$ and $t\in [0,T]$
\begin{align}\label{Eq:ineqUtt2}\begin{split}
\frac{1}{C_u} \E\bigg[\int_t^T \e^{cr} |U_r^r|^2  \d r \bigg] & \leq  \e^{cT}\big(  \|\eta \|_{\Lc^{2,2}}^2+   \| \tilde g\|_{\L^{1,2,2}}^2 \big)  +\E\bigg[ \int_t^T \e^{cr}\big( |\Yc_r|^2+ |\sigma_r^\t \Zc_r|^2\big) \d r \bigg]\\
& \quad + \sup_{s\in [0,T]} \E\bigg[ \int_t^T \e^{c r}  |\sigma_r^\t V_r^s|^2\d r\bigg]  .
\end{split}
\end{align}

We now estimate the term $V_t^t$. In light of \eqref{Eq:aprioriest:partialUV} and \eqref{Eq:ineqVtt2:0}, there exists $C>0$ such that for any $\eps>0$, $c>4 L_{g}$ and $t\in [0,T]$
\begin{align*}
\E\bigg[  \int_t^T\e^{cu} |\sigma^\t_u  V_u^u|^2 \d u\bigg]&  \leq \E\bigg[ \int_t^T \e^{cu} | \sigma^\t_u  V_u^t|^2 \d u\bigg] + \int_t^T\E\bigg[ \int_r^T\e^{cu} \big( \eps |\sigma_u^\t   V^r_u|^2+ \eps^{-1} |\sigma_u^\t \partial V^r_u |^2\big) \d u\bigg] \d r\\
& \leq  \sup_{r \in [0,T]} \E\bigg[ \int_t^T \e^{cu}  | \sigma^\t_u  V_u^r |^2 \d u\bigg]+ T   \sup_{r \in [0,T]} \E\bigg[\int_t^T \e^{cu} \big( \eps | \sigma^\t_u  V_u^r|^2+ \eps^{-1}  | \sigma^\t_u\partial V_u^r|^2\big) \d u\bigg] \\
& \leq  (1+\eps T)\sup_{r\in [0,T]} \E\bigg[ \int_t^T\e^{cu}   | \sigma^\t_u V_u^r|^2 \d u\bigg]  +T C\eps^{-1} \sup_{r\in [0,T]} \E\bigg[ \int_t^T \e^{cu}  \big(|U_u^r|^2 +|\sigma_u^\t  V_u^r|^2 \big) \d u \bigg]\ \\
& \hspace{1em}  +T C\eps^{-1}  \E\bigg[ \int_t^T\e^{cu}  \big(|\Yc_u|^2+|\sigma_u^\t  \Zc_u|^2 \big) \d u \bigg] \\
& \hspace{1em}  +T C\eps^{-1}\e^{cT}  \sup_{r\in [0,T]}    \E\bigg[\big| \partial_{s } \eta(r)\big|^2+\bigg( \int_t^T |\nabla \tilde g_u(r)|\d u\bigg)^2\bigg] 
\end{align*}
Thus, taking $\eps=TC$ we may find $C_v>0$ such that for any $c>4 L_{g}$ and $t\in [0,T]$ 
\begin{align}\label{Eq:ineqVtt2}\begin{split}
\frac{1}{C_v} \E\bigg[  \int_t^T \e^{cu}  |\sigma^\t_r V_r^r|^2 \d r \bigg]  & \leq  \e^{cT}\big(  \|\partial_{s} \eta\|_{\Lc^{2,2}}^2+ \| \nabla \tilde g\|_{\L^{1,2,2}}^2 \big)+  \E\bigg[ \int_t^T  \e^{cr} \big(|\Yc_r|^2+|\sigma_r^\t  \Zc_r|^2 \big) \d r \bigg]   \\
&\quad   + \sup_{s\in [0,T]} \E\bigg[ \int_t^T \e^{c r} \big(   |U_r^s|^2+ |\sigma_r^\t V_r^s|^2 \big) \d r\bigg].
\end{split}
\end{align}

We emphasise that the constants $(C_{\partial u},C_u, C_v)\in (0,\infty)^3$ depend only of the data of the problem and are universal for any value of $c> 4L_{g}$.\medskip

\textbf{Step $2$:} Let $s\in [0,T]$, we show that $(\Yc, U,\partial U) \in \S^2 \times \S^{2,2}\times \S^{2,2}$. To alleviate the notation we introduce
\[ \Yf:=( \Yc,U^s,\partial U^s) , \; \mathfrak{Z}:=( \Zc ,V^s,\partial V^s) ,\; \mathfrak{N}:=( \Nc ,M^s,\partial M^s),\]
whose elements we may denote with superscripts, e.g. $\Yc^1,$ $\Yc^2,$ $\Yc^3$ correspond to $\Yc,$ $U^s,$ $\partial U^s$.\medskip

By \eqref{Eq:ineqsquare} and \eqref{Eq:Lipasshg}, we obtain that there exists $C>0$, which may change value from line to line, such that
\begin{align*}
|U_t^s|^2 \leq&\ C \bigg(| \eta (s)|^2  + \bigg(\int_0^T|  \tilde g_r(s)|\d r \bigg)^{\!2}+\int_0^T \!  \big(  |U^s_r|^2+| \sigma_r^\t  V_r^s|^2 +|\Yc_r|^2+| \sigma_r^\t  \Zc_r|^2 \big) \d r+\bigg| \int_t^T   {V_r^{s}}^\t \d X_r+ \int_t^T   \d M_r^s\bigg|^{ 2} \bigg).
\end{align*}
We note that by Doob's inequality
\begin{align}\label{ControlSingleSI}
\E\Bigg[ \sup_{t\in [0,T]} \bigg| \int_0^t {V_r^s}^\t   \d X_r\bigg|^2\Bigg]\leq 4 \| V^s\|^2_{\H^2}.
\end{align}
Taking supremum over $t\in [0,T]$ and expectation we obtain for $s\in [0,T]$
\begin{align}\label{AuxAS2U}
\|U^s\|_{\S^2}^2 \leq C\big( \|\eta(s)\|_{\Lc^2}^2+ \|  \tilde g(s)\|^2_{\L^{1,2}}+  \| U^s\|^2_{\L^2}+ \|V^s\|^2_{\H^2}+\| \Yc\|^2_{\L^2}+ \| \Zc\|^2_{\H^2}+ \| M^s\|^2_{\M^2} \big)<\infty.
\end{align}

Similarly, we obtain that there exists $C>0$ such that for $s\in [0,T]$
\begin{align}\label{AuxAS2partialU}
\|\partial U^s\|_{\S^2}^2 \leq C\bigg( \|\partial_s \eta(s)\|_{\Lc^2}^2+ \| \nabla  \tilde g(s)\|^2_{\L^{1,2}}+ \sum_{ i=1}^3  \| \mathfrak{Y}^i \|^2_{\L^2}+\|\mathfrak{Z}^i \|^2_{\H^2}+   \| \partial M^s\|^2_{\M^2} \bigg)<\infty.
\end{align}

Given $(\eta,\partial_s \eta,  \tilde g, \partial \tilde g) \in (\Lc^{2,2})^2\times (\L^{1,2,2})^2$, $(U,\partial U, V,\partial V,M, \partial M)\in (\L^{2,2})^2\times (\H^{2,2})^2\times ( \M^{2,2})^2$, \eqref{AuxAS2U} and \eqref{AuxAS2partialU}, the mapping 
\[ ([0,T],\Bc([0,T])) \longrightarrow (\S^{2},\|\cdot \|_{ \S^{2}}): s \longmapsto \mathfrak{Y}^{is} \text{ is continuous for } i \in \{2,3 \}.\] 

Given $s\in [0,T]$, $\| \mathfrak{Y}^i \|_{\S^{2,2}}<\infty$. Consequently, $\mathfrak{Y}^i \in \S^{2,2}$ for $i \in \{ 2,3 \}$. Arguing as above we may also derive,
\begin{align*}
 | \Yc_t |^2 &\leq  C\bigg( |\xi |^2+\bigg(\int_0^T |\tilde h_r|\d r \bigg)^2  + \int_0^T \big( |\Yc_r|^2+|\sigma_r^\t \Zc_r|^2+  |U_r^r|^2+|\sigma_r^\t V_r^r|^2 + |\partial U_r^r|^2\big) \d r \\
 &\hspace*{2.5em} +\bigg| \int_t^T \Zc_r^\t \d X_r \bigg|^2+\bigg| \int_t^T \d N_r\bigg|^2 \bigg), 
\end{align*}
which in turn yields, in combination with \eqref{Eq:ineqpartialUtt2}, \eqref{Eq:ineqUtt2} and \eqref{Eq:ineqVtt2},
\begin{align}\label{AuxAS3}\begin{split}
\|\Yc\|_{\S^2}^2 \leq &\ C\big( I_0^2 +\|\Yc\|_{\L^2}^2+ \|  \Zc\|^2_{\H^2} +\|U\|_{\L^{2,2}}^2+ \|V\|^2_{\H^{2, 2}}+ \|\partial V\|^2_{\H^{2, 2}}+ \| N\|^2_{\M^2}  \big)<\infty.
\end{split}
\end{align}

Finally, taking sup over $s\in [0,T]$ and adding together \eqref{AuxAS3} \eqref{AuxAS2U} and \eqref{AuxAS2partialU} we obtain
\begin{align}\label{Eq:NormSestimate}
\begin{split}
\|\Yc\|_{\S^2}^2+ \|U^s\|_{\S^{2,2}}^2 + \|\partial U\|_{\S^{2,2}}^2\leq &\ C\big( I_0^2 +\| ( \Yc,\Zc,\Nc,U,V,M,\partial U,\partial V, \partial M)\|_{\Hc^o}\big).
\end{split}
\end{align}

\textbf{Step $3$:} We obtain the estimate of the norm. To ease the notation we introduce 
\begin{align*}
h_r:=h_r( \Yc_r, \Zc_r,U_r^r,V_r^r,\partial U_r^r), \; g_r(s):= g_r(s,U_r^s, V^s_r,\Yc_r,  \Zc_r), \nabla g_r(s):= \nabla g_r(s,\partial U_r^s, \partial V^s_r,U_r^s, V^s_r,\Yc_r,  \Zc_r).
\end{align*} 

By applying It\=o's formula to $\e^{ct}\big( |\Yc_t|^2+|U_t^s|^2+|\partial U_t^s|^2\big)$ we obtain, $\P\as$
\begin{align*}
& \sum_{ i=1}^3  \e^{ct} \big|\mathfrak{Y}^i_t\big|^2 +   \int_t^T \e^{cr}  \big| \sigma_r^\t \mathfrak{Z}^i_r\big|^2  \d r+\int_t^T \e^{cr-}   \d \big[{\mathfrak{N}^i}\big]_r +\Mf_t-\Mf_T \\
& = \e^{cT}(| \xi(T)|^2 +|\eta (s)|^2+|\partial_s \eta (s)|^2) +\int_t^T \e^{cr}\big(2  \Yc_r \cdot  h_r + 2 U_r^s \cdot g_r(s) +2 \partial U_r^s \cdot  \nabla g_r(s)-c ( |\Yc_r|^2+|U_r^s|^2+ |\partial U_r^s|^2) \big)\d r,
\end{align*}
where we used the orthogonality of $X$ and both $M^s$ and $N$, and we introduced the martingale
\begin{align*}
\Mf_t:=2 \sum_{i=1}^3 \int_0^t\e^{cr} \mathfrak{Y}^i_r\cdot {\mathfrak{Z}^i_r}^\t\,  \d X_r+\int_0^t \e^{cr-} \mathfrak{Y}^i_{r-} \cdot \d\,  \mathfrak{N}^i_r .
\end{align*}

Indeed, the Burkholder--Davis--Gundy inequality in combination with the fact that $(\Yc,U^s,\partial U^s)\in (\S^2)^3$ shows that $\Mf$ is uniformly integrable, and consequently a true martingale. Moreover, 
we insist on the fact that the integrals with respect to $N$, $M^s$ and $\partial M^s$ account for possible jumps, see \cite[Lemma 4.24]{jacod2003limit}.\medskip

Moreover, as $(\Yc, U, \partial U) \in \S^2 \times ( \S^{2,2})^2$, from \eqref{Eq:Lipasshg} and with Young's inequality we obtain that for any $(\eps,\tilde \eps)\in (0,\infty)^2$, there is $C(\tilde \eps) \in (0,\infty)$ such that the left--hand side above is smaller than
\begin{align*}
& \leq   \e^{cT}\big(| \xi|^2+|\eta (s)|^2+|\partial_s \eta (s)|^2 \big) +   \int_t^T  \e^{cr} ( C(\tilde \eps)-c) \big(   |{\Yc}_r|^2+ |U_r^s|^2 + |\partial U_r^s|^2  \big) \d r\\
&\quad  +\int_t^T \tilde \eps   \e^{cr} \bigg(    | \sigma_r^\t \Zc_r|^2+  | \sigma_r^\t V_r^s|^2+  | \sigma_r^\t \partial V_r^s|^2  + \frac{1}{ C_u} |U_r^r|^2 + \frac{1}{ C_v}|V_r^r|^2 +\frac{1}{C_{\partial u}}|\partial U_r^r|^2  \bigg) \d r \\
&\quad  +\eps \sum_{i=1}^3  \sup_{r\in [0,T]} \e^{cr} \big|\mathfrak{Y}^i_t\big|^2  +\eps^{-1} \bigg(\int_0^T \e^{\frac{c}2 r} |\tilde h_r |\d r\bigg)^2 + \eps^{-1}\bigg(\int_0^T \e^{\frac{c}2 r}|   \tilde g_r(s)|\d r \bigg)^2 + \eps^{-1}\bigg(\int_0^T \e^{\frac{c}2 r}|  \nabla \tilde g_r(s)|\d r \bigg)^2  ,
\end{align*}

with $(C_{\partial u},C_u,C_v)$ as in \eqref{Eq:ineqpartialUtt2}--\eqref{Eq:ineqVtt2}. Taking expectation and letting $c>4L_g$, we find there is $C>0$ such that
\begin{align*}
&\E\bigg[\sum_{ i=1}^3  \e^{ct} \big|\mathfrak{Y}^i_t\big|^2 +   \int_t^T \e^{cr}  \big| \sigma_r^\t \mathfrak{Z}^i_r\big|^2  \d r+\int_t^T \e^{cr-}   \d \big[{\mathfrak{N}^i}\big]_r  \bigg] \\
&\ \leq  \E\bigg[ \int_t^T  \e^{cr}    |{\Yc}_r|^2 ( C(\tilde \eps) -c) \d r\bigg] + \sup_{s\in [0,T]}  \E\bigg[ \int_t^T  \e^{cr} |U_r^s|^2 (  C(\tilde \eps) -c)\d r\bigg]+\sup_{s\in [0,T]} \E\bigg[\int_t^T  \e^{cr}  |\partial U_r^s|^2 (  C(\tilde \eps) -c)  \d r\bigg]\\
&\hspace*{1em}  +(1+\eps^{-1}+\tilde \eps )C I_0^2 + \tilde \eps C \Big( \|\Zc\|^2_{\H^{2,c}}+ \| V\|^2_{\H^{2,2,c}}+ \|\partial  V\|^2_{\H^{2,2,c}} \Big) + \eps  C  \big(\|\Yc\|_{\S^{2,c}}^2+\|U\|^2_{\S^{2,2,c}}+\|\partial U\|^2_{\S^{2,2,c}}  \big).
\end{align*}
We then let $\tilde \eps= 1/(2^4C)$, $c\geq  \max\{ 4L_g, C(\tilde \eps)\}$, and take $\sup$ over $t\in [0,T]$ (resp. $(s,t)\in [0,T]^2$) to each term on the left side separately. Adding these terms up we find there is $C>0$, such that for any $\eps>0$
\begin{align}\label{AuxAS4}
\begin{split}
\frac{1}T\|(\Yc,\Zc,\Nc,U,V,M,\partial U,\partial V, \partial M)\|_{\Hf^{o}} & \leq \,   \sup_{t\in [0,T]}\E\big[ \e^{ct} |\Yc_t|^2 \big]+   \sup_{(s,t)\in [0,T]^2}\E\big[ \e^{ct} |U_t^s|^2\big]+   \sup_{(s,t)\in [0,T]^2}\E\big[ \e^{ct} |\partial U_t^s|^2\big]\\[0.3em]
&\hspace*{1em} +  \|  \Zc\|^2_{\H^2} +  \|   V\|^2_{\H^{2,2}}+  \|   \partial V\|^2_{\H^{2,2}}+ \| \Nc\|_{\M^2}^2+ \| M\|_{\M^{2,2}}^2+ \|\partial  M\|_{\M^{2,2}}^2  \\[0.3em]
& \leq   (1+\eps^{-1} )C I_0^2   +  \eps C  \big(\|\Yc\|_{\S^2}^2+\|U\|^2_{\S^{2,2}}+\|\partial U\|^2_{\S^{2,2}}  \big).
\end{split}
\end{align}

We can the use \eqref{AuxAS4} back in \eqref{Eq:NormSestimate} to find $ \eps \in (0,\infty)$ small enough so that\footnote{Recall the norm $\|\cdot\|_{\Hf}$ is the norm induced by the space $\Hf$ as defined in \Cref{Remark:wpsystem}\ref{Remark:wpsystem:ii}}
\begin{align*}
\|(\Yc,\Zc,\Nc,U, V,M,\partial U,\partial V,\partial M) \|^2_{{\Hf}} \leq C I_0^2 
\end{align*}
The result in terms of the norm $\|\cdot \|_{\Hc}$ follows from \eqref{Eq:ineqVtt2}.
\end{proof}

\begin{proposition}\label{Prop:aprioriestimatesdif}
Let $(\xi^i ,\eta^i,\partial_s \eta^i)\in  \Lc^2\times (\Lc^{2,2})^2$ and $(h^i, g^i, \partial_s g^i )$ for $i\in\{1,2\}$ satisfy {\rm\Cref{AssumptionA}} and suppose in addition that $\Hf^i \in  \Hc^o$ is a solution to \eqref{Eq:systemBSDE} with coefficients $(\xi^i,h^i,\eta^i, g^i,\partial_s \eta^i, \nabla g^i)$, $i\in\{1,2\}$. Then
\begin{align*}
\| \delta \Hf  \|^2_{\Hc} \leq &\ C\Big(  \|\delta \xi\big\|^2_{\Lc^2} +\|\delta \eta \big\|^2_{\Lc^{2,2}}+\|\delta \partial \eta \big\|^2_{\Lc^{2,2}} +  \|\delta_1 h \|_{\L^{1,2}}^2+ \|\delta_1  g \|_{\L^{1,2,2}}^2+\|\delta_1  \nabla g \|_{\L^{1,2,2}} \Big),
\end{align*}
where for $\varphi\in\{\Yc,\Zc,\Nc,U,V,M,\partial U,\partial V,\partial M,\xi,\eta,\partial_s \eta \}$ and $\Phi\in \{h,g, \nabla g\}$
\[
\delta \varphi:= \varphi^1-\varphi^2,\; \text{\rm and, }\; \delta_1 \Phi_t:= \Phi^1_t( \Yc_r^1, \Zc^1_t,U^{1t}_t,V^{1t}_t)- \Phi^2_t( \Yc_r^1, \Zc^1_t,U^{1t}_t,V^{1t}_t),\; \d t\otimes \d \P \ae \text{ in }[0,T]\times \Xc.
\]
\end{proposition}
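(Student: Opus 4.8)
The plan is to follow the same three-step template as in the proof of \Cref{Prop:aprioriestimates}, but now applied to the system satisfied by the differences $\delta\Hf$. First I would write down the BSDE system for $(\delta\Yc,\delta\Zc,\delta\Nc,\delta U,\delta V,\delta M,\delta\partial U,\delta\partial V,\delta\partial M)$, obtained by subtracting the two copies of \eqref{Eq:systemBSDE}. The generators of this difference system are, schematically, $h^1_r(\Hf^1_r)-h^2_r(\Hf^2_r)$, which one decomposes as $\big(h^1_r(\Hf^1_r)-h^2_r(\Hf^1_r)\big)+\big(h^2_r(\Hf^1_r)-h^2_r(\Hf^2_r)\big)$; the first bracket is exactly the inhomogeneous term $\delta_1 h_r$ (evaluated along the first solution), which lies in $\L^{1,2}$ by assumption, and the second bracket is controlled by $L_{h^2}$ times the $\ell^1$-norm of the differences $(\delta\Yc_r,\sigma_r^\t\delta\Zc_r,\delta U_r^r,\sigma_r^\t\delta V_r^r,\delta\partial U_r^r)$, via \Cref{AssumptionA}\ref{AssumptionA:ii}. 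The analogous decomposition applies to $\delta g$ and $\delta\nabla g$. Thus the difference system is itself of the form \eqref{Eq:systemBSDE} with new data $\big(\delta\xi,\delta_1 h,\delta\eta,\delta_1 g,\delta\partial_s\eta,\delta_1\nabla g\big)$ and with generators that are Lipschitz with constants bounded by $\max\{L_{h^1},L_{h^2},L_{g^1},L_{g^2},\dots\}$.

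The second step is to reproduce the auxiliary diagonal estimates \eqref{Eq:ineqpartialUtt2}, \eqref{Eq:ineqUtt2}, \eqref{Eq:ineqVtt2} for the diagonal processes $\delta\partial U^r_r$, $\delta U^r_r$, $\delta V^r_r$. These go through verbatim: they only used the Lipschitz property of the generators (now the difference generators) and \Cref{Lemma:partialU}\ref{Lemma:partialU:ii},\ref{Lemma:partialU:iv}, both of which hold for the difference system once one checks that $(\delta U,\delta V,\delta M)$ and $(\delta\partial U,\delta\partial V,\delta\partial M)$ satisfy the corresponding linear BSDEs — this is immediate from linearity of $\nabla g$ in $({\rm u},{\rm v})$ and the fact that $\delta\partial U$ solves the difference of the third equations. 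One subtlety: the third equation for $\delta\partial U$ has generator $\nabla g^1(\cdot,\Hf^1)-\nabla g^2(\cdot,\Hf^2)$, and the natural splitting produces, besides $\delta_1\nabla g$ and the Lipschitz-in-$(\partial U,\partial V)$ part, also terms like $\partial_u g^2\,\delta\partial U$ and $\partial_{v_{:i}}g^2\,\delta\partial V$ coming from the difference of the $\partial_u g\,{\rm u}$ and $\partial_{v_{:i}}g\,{\rm v}_{i:}$ pieces; these are again Lipschitz contributions in the unknowns $(\delta\partial U,\delta\partial V)$ with constant $L_{g^2}$, so they are harmless. I would also need the analogue of \Cref{Eq:NormSestimate}, i.e. $\|\delta\Yc\|_{\S^2}^2+\|\delta U\|_{\S^{2,2}}^2+\|\delta\partial U\|_{\S^{2,2}}^2\leq C\big(I_{0,\delta}^2+\|\delta\Hf\|_{\Hc^o}\big)$, where $I_{0,\delta}^2$ is the right-hand side of the claimed inequality; this follows exactly as in Step 2 of \Cref{Prop:aprioriestimates}.

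The third step is the $\e^{ct}$-weighted It\^o computation applied to $\e^{ct}\big(|\delta\Yc_t|^2+|\delta U_t^s|^2+|\delta\partial U_t^s|^2\big)$, exactly as in Step 3 of the proof of \Cref{Prop:aprioriestimates}, absorbing the quadratic-in-unknowns terms on the right via the diagonal estimates of Step 2, choosing $\tilde\eps$ small, then $c$ large, then $\eps$ small, and finally feeding \eqref{AuxAS4}'s analogue back into \Cref{Eq:NormSestimate}'s analogue to close the estimate in $\|\cdot\|_{\Hf}$; the passage to $\|\cdot\|_{\Hc}$ uses \eqref{Eq:ineqVtt2} as before. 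I expect the only genuine obstacle — really a bookkeeping one — to be handling the difference of the $\nabla g$ generators cleanly: one must check that every term arising from $\nabla g^1(\cdot,\Hf^1)-\nabla g^2(\cdot,\Hf^2)$ is either part of $\delta_1\nabla g$ (integrable at the first solution, by \Cref{Assumption:SystemBSVIEwp}-type hypotheses already assumed here) or a Lipschitz multiple of one of the difference processes already present in the energy estimate; since $\nabla g$ is affine in $(\partial U,\partial V)$ with coefficients $\partial_u g,\partial_{v_{:i}}g$ that are themselves bounded (they are Lipschitz constants of $g$), this works, but it requires writing the decomposition carefully. Everything else is a direct transcription of the a priori estimate argument, so I would not grind through it in detail.
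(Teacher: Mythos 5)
Your proposal is correct and takes essentially the same route as the paper: the paper likewise splits the difference of generators into the inhomogeneous terms $\delta_1 h,\delta_1 g,\delta_1\nabla g$ plus linear terms with bounded coefficient processes (supplied by the Lipschitz assumptions), so that $\delta\Hf$ solves a system of the same form as \eqref{Eq:systemBSDE}, and then directly invokes \Cref{Prop:aprioriestimates}. The only difference is economy --- the paper cites the \emph{a priori} estimate as a black box rather than re-running the three-step argument on the difference system as you do.
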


\begin{proof}
Note that by the Lipschitz assumption on $h$ and $g$ there exist bounded processes with appropriate dimensions $(\alpha^i, \beta^i,\gamma^i, \eps^i)$, $i\in\{1,2,3\}$, $\rho$ and $\varrho$ such that 
\begin{align*}
\delta \Yc_t =&\ \delta \xi(T) + \int_t^T \big( \delta_1 h_r +\gamma_r^1  \delta \Yc_r + \alpha^{1\t}_r  \sigma_r^\t \delta \Zc_r  +  \beta^1_r \delta U_r^r+\eps^{1\t}_r  \sigma_r^\t \delta V_r^r\big) \d r - \int_t^T  \delta \Zc_r^\t \d X_r - \int_t^T \d \delta N_r,\\
\delta U_t^s=&\ \delta \eta (s)+ \int_t^T  \big(\delta_1  {g}_r(s) + \beta_r^2  \delta U_r^s  +\eps^{2\t}_r \sigma_r^\t \delta V_r^s   + \gamma_r^2  \delta \Yc_r + \alpha_r^{2\t}    \sigma_r^\t \delta \Zc_r \big) \d r - \int_t^T  \delta  V_r^{s\t} \d X_r - \int_t^T  \d \delta  M^s_r\\
\delta \partial U_t^s=&\ \delta \partial_s \eta (s)+ \int_t^T  \big(\delta_1  \nabla g_r(s) +\rho_r \delta  \partial U^s_r+\varrho_r \delta  \partial V^s_r+ \beta_r^3  \delta U_r^s  +\eps^{3\t}_r \sigma_r^\t \delta V_r^s  +  \gamma_r^3  \delta \Yc_r + \alpha_r^{3\t}    \sigma_r^\t \delta \Zc_r \big) \d r\\
&\; - \int_t^T  \delta  \partial {V_r^{s} }^\t \d X_r - \int_t^T  \d\delta   \partial   M^s_r.
\end{align*}
We can therefore apply Lemma \ref{Prop:aprioriestimates} and the result follows.
\end{proof}

When the data of the system is chosen so as to study the class of {\rm type--I BSVIEs} considered in {\rm \Cref{Section:BSVIE}}, our approach can be specialised so as to enlarge the initial space and  simplify the {\it a priori} estimates obtained in {\rm \Cref{Prop:aprioriestimates}}. We let $(\Hc^{\star,o}, \|\cdot\|_{\Hc^{\star,o}})$ be
\[
 \Hc^{\star,o}:= \L^{2}\times \H^{2} \times {\M^{2}} \times \L^{2,2} \times \H^{2,2}  \times {\M^{2,2}} , 
 \]
 \[
   \|(\Yc,\Zc,\Nc,Y,Z,N)  \|^2_{\Hc^{\star,o}}:=  \|\Yc\|_{\L^{2}}^2 + \|\Zc\|_{\H^{2}}^2+\|\Nc \|_{\M^{2}}^2+  \|Y\|_{\L^{2,2}}^2 + \|Z\|_{\H^{2,2}}^2+\|N \|_{\M^{2,2}}^2.
\]

\begin{proposition}\label{Prop:aprioriest:simplify}
Let {\rm \Cref{Assumption:SystemBSVIEwp}} hold and consider $(\Yc,\Zc,\Nc,Y,Z,N)\in \Hc^{\star,o}$ solution to
\begin{align}\label{Eq:prop:aprioriest:simplify}\tag{$\Sc_f^o$}
\begin{split}
\Yc_t&=\xi(T,X)+\int_t^T\big( f_r(r,X,\Yc_r,\Zc_r,Y_r^r)- \partial Y_r^r\big) \d r-\int_t^T \Zc_r^\t X_r-\int_t^T \Nc_r,\; t\in [0,T], \\
Y_t^s&=   \xi (s,X)+\int_t^T  f_r(s,X,Y_r^s,Z_r^s, \Yc_r) \d r-\int_t^T {Z_r^s}^\t  \d X_r-\int_t^T \d N^s_r,\; (s,t)\in[0,T]^2.
\end{split}
\end{align}
with $\partial Y$ given as in {\rm \Cref{Lemma:partialU}}. Then $(\Yc,Y)\in \S^2\times \S^{2,2}$ and there exist $C>0$ such that
\begin{align*}
\| (\Yc,\Zc,\Nc,Y,Z,N) \|_{\Hc^\star}^2 \leq C  \Big(   \|\xi \|^2_{\Lc^{2,2}} +\|\tilde f \|_{\L^{1,2,2}}^2 \Big) <\infty.
\end{align*}
\end{proposition}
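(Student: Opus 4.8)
The plan is to mirror the proof of \Cref{Prop:aprioriestimates}, but exploiting the simpler structure of \eqref{Eq:prop:aprioriest:simplify}: here the generator of the first BSDE involves only the diagonals $(Y_t^t,\partial Y_t^t)_{t\in[0,T]}$ but \emph{not} the diagonal $(Z_t^t)_{t\in[0,T]}$ of the $Z$-component. This is exactly the degeneracy that, per \Cref{Remark:wpsystem}\ref{Remark:wpsystem:ii}, allows one to stay in the classical space. First I would record, using \Cref{Assumption:SystemBSVIEwp}, the Lipschitz bounds on $f_r(r,\cdot)$, $f_r(s,\cdot)$ and $\nabla f_r(s,\cdot)$ in the style of \eqref{Eq:Lipasshg}, and then invoke \Cref{Lemma:partialU} to obtain $\partial Y$ together with the a priori bound \eqref{Eq:aprioriest:partialUV} for $\partial Z$; this handles the term $\partial Y_r^r$ appearing in the first equation.

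The core estimate is Step~1 of \Cref{Prop:aprioriestimates}: applying Meyer--It\=o's formula to $\e^{\frac c2 t}|\partial Y_t^s|$ and then squaring, integrating in $s$ and taking expectations, one gets the analogue of \eqref{Eq:ineqpartialUtt2}, i.e. a bound on $\E[\int_t^T \e^{cr}|\partial Y_r^r|^2\d r]$ in terms of $\|\partial_s\xi\|_{\Lc^{2,2}}^2$, $\|\nabla\tilde f\|_{\L^{1,2,2}}^2$, $\E[\int_t^T\e^{cr}(|\Yc_r|^2+|\sigma_r^\t\Zc_r|^2)\d r]$ and $\sup_s\E[\int_t^T\e^{cr}(|Y_r^s|^2+|\sigma_r^\t Z_r^s|^2+|\sigma_r^\t\partial Z_r^s|^2)\d r]$. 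The crucial point — and what makes this proposition simpler than \Cref{Prop:aprioriestimates} — is that we do \emph{not} need the analogue of \eqref{Eq:ineqVtt2} controlling $(Z_t^t)_{t}$, because $Z_t^t$ does not enter the generator of the $\Yc$-equation. So the only ``diagonal'' estimates needed are for $Y_t^t$ and $\partial Y_t^t$, both of which live in $\S^{2,2}$ by pathwise regularity, and these can be absorbed; there is no obstruction requiring the reduced space $\overline{\H}^{2,2}$.

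Next I would run Step~2: standard estimates (\eqref{Eq:ineqsquare}, Doob's inequality as in \eqref{ControlSingleSI}) applied to $Y^s$ and to $\Yc$ give $(\Yc,Y)\in\S^2\times\S^{2,2}$, with $\|\Yc\|_{\S^2}^2+\|Y\|_{\S^{2,2}}^2\leq C(\|\xi\|_{\Lc^{2,2}}^2+\|\tilde f\|_{\L^{1,2,2}}^2+\|(\Yc,\Zc,\Nc,Y,Z,N)\|_{\Hc^{\star,o}})$, where again the diagonal $\partial Y_r^r$ is controlled via \eqref{Eq:ineqpartialUtt2}'s analogue. Then Step~3: apply It\=o's formula to $\e^{ct}(|\Yc_t|^2+|Y_t^s|^2)$, use the Lipschitz bounds together with Young's inequality with parameters $(\eps,\tilde\eps)$ exactly as in \eqref{AuxAS4}, choose $\tilde\eps$ small, $c$ large depending on $L_f$ and the constants appearing, and obtain
\begin{align*}
\tfrac1T\|(\Yc,\Zc,\Nc,Y,Z,N)\|_{\Hc^{\star,o}}\leq (1+\eps^{-1})C\big(\|\xi\|_{\Lc^{2,2}}^2+\|\tilde f\|_{\L^{1,2,2}}^2\big)+\eps C\big(\|\Yc\|_{\S^2}^2+\|Y\|_{\S^{2,2}}^2\big),
\end{align*}
and finally combine with the $\S$-estimate, choosing $\eps$ small enough to absorb, to close the bound in $\|\cdot\|_{\Hc^\star}$. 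The one place to be careful — the ``main obstacle'' — is bookkeeping the terms $\|\partial_s\xi\|_{\Lc^{2,2}}^2$ and $\|\nabla\tilde f\|_{\L^{1,2,2}}^2$ that enter through the $\partial Y$-estimate \eqref{Eq:aprioriest:partialUV}--\eqref{Eq:ineqpartialUtt2}: a priori these appear on the right-hand side, whereas the claimed bound involves only $\|\xi\|_{\Lc^{2,2}}^2$ and $\|\tilde f\|_{\L^{1,2,2}}^2$. The resolution is that $\nabla\tilde f$ and $\partial_s\xi$ are controlled, via the Lipschitz property of $\nabla f$ in its spatial arguments at ${\bf 0}$ and the chain-rule definition of $\nabla f$, by $\tilde f$, $\xi$ and their $s$-regularity moduli, so these constants fold into $C$; one should state this reduction explicitly (it is precisely the content of \Cref{Assumption:SystemBSVIEwp}\ref{Assumption:SystemBSVIEwp:i} and \ref{Assumption:SystemBSVIEwp:iv}) rather than leaving $\|\partial_s\xi\|$ and $\|\nabla\tilde f\|$ in the final inequality.
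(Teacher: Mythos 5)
Your overall architecture (Meyer--It\=o for the diagonal of $\partial Y$, then $\S$-estimates, then the weighted It\=o/Young contraction-style estimate) follows \Cref{Prop:aprioriestimates}, and you are right that no analogue of \eqref{Eq:ineqVtt2} for $(Z_t^t)_{t\in[0,T]}$ is needed here. But the step you yourself flag as the ``main obstacle'' is resolved incorrectly, and this is a genuine gap. You claim that $\|\partial_s\xi\|_{\Lc^{2,2}}$ and $\|\nabla\tilde f\|_{\L^{1,2,2}}$ can be ``folded into $C$'' because they are controlled by $\xi$, $\tilde f$ and their $s$-regularity moduli. This is false: neither \Cref{Assumption:SystemBSVIEwp}\ref{Assumption:SystemBSVIEwp:i} nor \ref{Assumption:SystemBSVIEwp:iv} gives any quantitative bound of a derivative in $s$ by the function itself (consider $s\longmapsto\epsilon\sin(s/\epsilon^2)$: uniformly small, uniformly continuous, with arbitrarily large derivative). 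The constant $C$ in the proposition depends only on the Lipschitz constants and $T$, not on the data, so your route would only prove the weaker estimate of \Cref{Prop:aprioriestimates} with $\|\partial_s\xi\|^2_{\Lc^{2,2}}+\|\nabla\tilde f\|^2_{\L^{1,2,2}}$ still present on the right-hand side --- which is precisely what this proposition is meant to remove (cf. \Cref{Remark:wpBSVIE}\ref{Remark:wpBSVIE:ii}).

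The actual mechanism in the paper is different and you are missing it: the term $-\partial Y_r^r$ in the generator of the first equation of \eqref{Eq:prop:aprioriest:simplify} is exactly the correction identified in \Cref{Lemma:intdiagpartialU}, so that the diagonal $(Y_t^t)_{t\in[0,T]}$ solves the \emph{same} Lipschitz BSDE as $\Yc$; by uniqueness $(\Yc,\Zc,\Nc)=\big((Y_t^t)_t,(Z_t^t)_t,(\widetilde N_t)_t\big)$, and substituting back into the second equation at $s=t$ yields
\begin{align*}
\Yc_t=\xi(t)+\int_t^T f_r(t,Y_r^t,Z_r^t,\Yc_r)\,\d r-\int_t^T {Z_r^t}^\t\,\d X_r-\int_t^T\d\Nc_r^t,\quad t\in[0,T],\ \P\as,
\end{align*}
i.e. \eqref{Eq:prop:esimplfy}, in which no $s$-derivative of the data and no $\partial Y_r^r$ appears at all. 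The It\=o/Young estimate is then run on the system $(\Ac)$ consisting of this equation together with the $Y^s$-equation, and the derivative terms never enter the right-hand side in the first place --- there is nothing to ``fold in''. Without this diagonal identification your argument cannot reach the stated bound.
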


\begin{proof}
We first note that $Y\in \S^{2,2}$ follows as in \Cref{Prop:aprioriestimates}. Thus, in light of {\rm \Cref{Lemma:partialU}}, there exists $(\partial Y,\partial Z, \partial N)\in \S^{2,2}\times \H^{2,2}\times \M^{2,2}$ solution to the {\rm BSDE} with data $(\partial_s \xi, \partial f)$, and, $\big( (Y_t^t)_{t\in [0,T]},(Z_t^t)_{t\in [0,T]}\big)\in\S^2\times \H^2$ are well--defined. Moreover, $\partial Y_r^r$ is well--defined as an element of $\L^{1,2}$, i.e. $\d t \otimes \d \P\ae$ on $[0,T]\times \Xc$, as a consequence of the path--wise continuity of $\partial Y_\cdot^s$. With this, we conclude $\Yc\in \S^2$.\medskip

Let us now note that given $(Y_t^t)_{t\in[0,T]} ,(Z_t^t)_{t\in[0,T]}$ and $(\partial Y_t^t)_{t\in[0,T]}$, the first equation, being a Lipschitz {\rm BSDE}, admits a unique solution $(\Yc,\Zc,\Nc)$. In addition, {\color{black} for $\widetilde N$} as in {\rm \Cref{Lemma:intdiagpartialU}} we obtain
\begin{align}\label{Eq:prop:esimplfy0}
Y_t^t  =Y_T^T+\int_t^T  \big( f_r(r,Y_r^r,Z_r^r,\Yc_r)- \partial Y_r^r\big) \d r -\int_t^T {Z_r^r}^\t  \d X_r-\int_t^T {\color{black} \d \widetilde N_r} ,\; t\in [0,T],\; \P\as
\end{align}
Thus, $\big(\Yc_\cdot,\Zc_\cdot,\Nc_\cdot\big)=\big((Y_t^t)_{t\in[0,T]},(Z_t^t)_{t\in[0,T]},{\color{black} (\widetilde N_t)_{t\in[0,T]}}\big)$ in $\S^2\times \H^2\times \M^2$ and we obtain
\begin{align}\label{Eq:prop:esimplfy}
\Yc_t  = \xi(t)+\int_t^T  f_r(t,Y_r^t,Z_r^t,\Yc_r) \d r -\int_t^T {Z_r^t}^\t  \d X_r-\int_t^T \d \Nc_r^t ,\; t\in [0,T],\; \P\as
\end{align}
With this equation we can simplify our estimates. Let us introduce the system
\begin{align*}\tag{$\Ac$}
\begin{split}
\Yc_t  &= \xi(t)+\int_t^T  f_r(t,Y_r^t,Z_r^t,\Yc_r) \d r -\int_t^T {Z_r^t}^\t  \d X_r-\int_t^T \d  \Nc_r^t ,\; t\in [0,T], \\
Y_t^s&=   \xi (s)+\int_t^T  f_r(s,Y_r^s,Z_r^s, \Yc_r) \d r-\int_t^T {Z_r^s}^\t  \d X_r-\int_t^T \d N^s_r,\; (s,t)\in[0,T]^2.
\end{split}
\end{align*}
Then, following the same reasoning of \Cref{Prop:aprioriestimates}, i.e. applying It\=o's formula to $\e^{ct}\big( |\Yc_t|+ | Y_t^s|) $ in combination with Young's inequality, we obtain there is $C>0$ such that $\| (\Yc,\Zc,\Nc,Y,Z,N) \|_{\Hc^\star}^2 \leq C  \big(   \|\xi \|^2_{\Lc^{2,2}} +\|\tilde f \|_{\L^{1,2,2}}^2 \big) <\infty.$
\end{proof}

\subsection{Well--posedness}

Before we present the proof of \Cref{Thm:wp} we recall that in light of \Cref{Prop:aprioriestimates} and \Cref{Prop:aprioriestimatesdif} once the result is obtained for $\Hc^o$ the existence of a unique solution in $\Hc$ follows immediately.
\begin{proof}[Proof of {\rm \Cref{Thm:wp}}]
Note that uniqueness follows from \Cref{Prop:aprioriestimatesdif}. To show existence, let us define the map
\begin{align*}
\Tf:  \Hc^o &\longrightarrow  \Hc^o\\
(y,z,n,u,v,m,{\rm u}, {\rm v}, {\rm m} )& \longmapsto (Y,Z,N,U,V,M,\partial U, \partial V, \partial M),
\end{align*} 
with $(\Yc,\Zc,\Nc,U,V,M,\partial U, \partial V, \partial M)$ given by
\begin{align*}
\Yc_t&=\xi(T,X_{\cdot\wedge T})+\int_t^T h_r(X,y_r, z_r, U_r^r, V_r^r,\partial U_r^r)\d r-\int_t^T \Zc_r^\t d X_r-\int_t^T \d  N_r,\\
U_t^s&= \eta (s,X_{\cdot\wedge,T})+\int_t^T  g_r(s,X,U_r^s,V_r^s, y_r, z_r) \d r-\int_t^T  {V_r^s}^\t \d X_r-\int_t^T \d  M^s_r,\\
\partial U_t^s&= \partial_s \eta (s,X_{\cdot\wedge,T})+\int_t^T   \nabla g_r(s,X,\partial U_r^s,\partial V_r^s,U_r^s,V_r^s, y_r, z_r) \d r-\int_t^T  \partial {V_r^s}^\t  \d X_r-\int_t^T \d  \partial M^s_r.
\end{align*}

\medskip
{\bf Step $1$:} We first show $\Tf$ is well defined. Let $(y,z,n,u,v,m,{\rm u}, {\rm v}, {\rm m} )\in  \Hc^o$.

\begin{enumerate}[label=$(\roman*)$, ref=.$(\roman*)$,wide, labelwidth=!, labelindent=0pt]

\item Let us first consider the tuples $(U,V,M)$ and $(\partial U,\partial V,\partial M)$. Let us first consider the second equation. Given $(y,z)\in \S^2\times \H^2$ and \Cref{AssumptionA}, this equation is a standard Lipschitz BSDE whose well--posedness follows by classical arguments, see \cite{zhang2017backward, el1997backward}. This yields $(U^s,V^s,M^s)\in \S^2\times \H^2\times \M^2$ for all $s\in [0,T]$.\medskip

Let us argue the continuity of $([0,T],\Bc([0,T])) \longrightarrow (\L^{2},\|\cdot \|_{ \L^{2}}): s \longmapsto U^s $. Let $(s_n)_n\subseteq [0,T], s_n \xrightarrow{n\to\infty} s_0\in[0,T]$ and define for $\varphi\in \{U,V,\eta\}$, $\Delta \varphi^n:=\varphi^{s_n}-\varphi^{s_0}$. From the classic stability result for BSDEs we obtain that there is $C>0$ such that
\begin{align*}
\E\bigg[\int_0^T | \Delta U_t^n|^2\d r \bigg]\leq 2T\Big( \|\Delta {\eta}^n\|^2_{\Lc^2} +T L_{ g}^2 \big(\rho_{ g}^2(|s_n-s_0|)\Big).
\end{align*}

We conclude $\|U\|_{\L^{2,2}}< \infty$ and $U\in {\L^{2,2}}$. Given $(U^s,V^2s)\in \L^2\times \H^2$ together with $(y,z)\in \S^2\times \H^2$, the argument for $(\partial U^s, \partial V^s, \partial M^s)$ for fixed $s\in [0,T]$ is identical. \medskip

\item  We now show that $(V,\partial V , M,  \partial M) \in (\H^{2,2})^2 \times ({\M^{2,2}})^2$. Again, the argument for $(\partial V,\partial M)$ is completely analogous. Applying It\=o's formula to $|U_r^s|^2$ we obtain
\begin{align*}
|U_t^s|^2+\int_t^T | \sigma_r^\t V_r^s|^2 \d r+\int_t^T \d [ M^s]_r=&\ |\eta (s)|^2 +2\int_t^T  U_r^s  \cdot g_r(s,U_r^s, V^s_r,y_r,  z_r)\d r-2\int_t^T U^s_r\cdot {V^s_r}^\t  \d X_r\\
&-2\int_t^T U^s_{r-} \cdot \d M_r^s.
\end{align*}
First note \ $U^s \in \S^{2}$ guarantees that the last two terms are true martingale for any $s\in[0,T]$. To show that $([0,T],\Bc([0,T])) \longrightarrow (\H^{2},\|\cdot \|_{ \H^{2}})\, \big($resp. $ (\M^{2},\|\cdot \|_{ \M^{2}})\big) \, : s \longmapsto V^s \, \big($resp. $ M^s\big)$ is continuous, let $(s_n)_n\subseteq [0,T],$ $s_n\xrightarrow{n\to\infty} s_0\in[0,T]$. We then deduce there is $C>0$ such that
\begin{align*}
\E\bigg[\int_0^T | \sigma_r^\t \Delta V_r^n|^2\d r+ [ \Delta M^s]_T  \bigg]\leq C\Big(\|\Delta  \eta\|_{\Lc^2}^2 +\rho_{ g}^2(|s_n-s_0|) \Big),
\end{align*}
and, likewise, we obtain
\begin{align*}
\sup_{s\in [0,T]} \E\bigg[\int_0^T | \sigma_r^\t V_r^s|^2\d r +  [ M^s]_T \bigg]\leq  C\Big(\|\eta \|_{\Lc^{2,2}}^2 +\| \tilde g\|^2_{\L^{1,2}} \Big)<\infty.
\end{align*}

Since the first term on the right--hand side is finite from \Cref{AssumptionA}, we obtain $\|V\|_{\H^{2,2}}+\|M\|_{\M^{2,2}}<\infty$.\medskip

We are left to argue for the process $\Vc:=(V_t^t)_{t\in [0,T]}$. This follows noticing that we may apply \Cref{Lemma:partialU} to
\begin{align*}
U_t^s&= \eta (s,X_{\cdot\wedge,T})+\int_t^T  g_r(s,X,U_r^s,V_r^s, y_r, z_r) \d r-\int_t^T  {V_r^s}^\t \d X_r-\int_t^T \d  M^s_r,\\
\partial U_t^s&= \partial_s \eta (s,X_{\cdot\wedge,T})+\int_t^T   \nabla g_r(s,X,\partial U_r^s,\partial V_r^s,U_r^s,V_r^s, y_r, z_r) \d r-\int_t^T  \partial {V_r^s}^\t  \d X_r-\int_t^T \d  \partial M^s_r,
\end{align*}
and obtain $\Vc$ is well--defined and $\|\Vc\|_{\H^2}<\infty$. We conclude $\Vc\in \H^2$ and thus $V\in \Ho$.

\item We derive an auxiliary estimate. Recall $(U,V,M,\partial U,\partial V, \partial M)$ satisfy \eqref{Eq:constraintspace}. Now, in light of \Cref{AssumptionA} and $(ii)$, we may find, as in {\bf Step $1$} in \Cref{Prop:aprioriestimates}, a universal constant $C>0$ such that for any $c> 4L_{ g}$ and $t\in [0,T]$
\begin{align}\label{Eq:ineqUVtt2wp}
\begin{split}
\frac{1}C \E\bigg[ \int_t^T  \e^{cr}\big( |U_r^r|^2+  |\partial U_r^r|^2+ |\sigma^\t_r V_r^r|^2\big)  \d r\bigg] & \leq   \e^{cT}  \big(  \|\eta \|_{\Lc^{2,2}}^2+   \| \tilde g\|_{\L^{1,2}}^2+  \|\partial_s \eta \|_{\Lc^{2,2}}^2+ \| \nabla \tilde g\|^2_{\L^{1,2,2}} \big) \\
 & \hspace{1em}  +  \E\bigg[   \int_t^T \e^{cr} \big( |y_r|^2 + |\sigma^\t_rz_r|^2\big)\d r\bigg]
\end{split}
\end{align}

\item We argue for the tuple $(\Yc,\Zc,\Nc)$, notice that
\begin{align*}
\widetilde \Yc_t:= \E \bigg[\xi(T)+\int_0^T h_r(y_r, z_r, U_r^r, V_r^r, \partial U_r^r )\d r\bigg|\Fc_t \bigg], \text{ is a square integrable $\F$--martingale}.
\end{align*}

Indeed, under \Cref{AssumptionA}, $h$ is uniformly Lipschitz in $(y,z,u)$, so \eqref{Eq:ineqUVtt2wp} yields
\begin{align*}
\E\big[|\widetilde \Yc_t|^2\big]&\leq 6 \bigg( \|\xi \|_{\Lc^2}^2+\|\tilde h\|_{\L^{1,2}}^2+ T L_h^2 \bigg( \|y\|^2_{\H^2}+\|z\|^2_{\H^2}+\E\bigg[\int_0^T \big( |U_r^r|^2 +|V_r^r|^2+|\partial U_r^r|^2  \big)\d r \bigg]  \bigg) \bigg) <\infty, \;  \forall t\in [0,T]
\end{align*}\vspace*{-1em}

Integrating the above expression, Fubini's theorem implies that $\widetilde \Yc\in \L^2$, thus the the predictable martingale representation property for local martingales guarantees the existence of a unique $(\Zc, \Nc) \in  \H^2\times \M^2$ such that $(\Yc,\Zc,\Nc)$ satisfies the correct dynamics and Doob's inequality implies $\Yc\in \S^2$, where 
\[ \Yc
:=\widetilde \Yc -\E\bigg[\int_0^\cdot h_r(y_r,z_r,U_r^r,V_r^r,\partial U_r^r)\d r\bigg].\]\vspace*{-1em}

All together, we have shown that $\phi(y,z,n,u,v,m,\partial u,\partial v,\partial m)\in \Hc^o$.
\end{enumerate}

\medskip
{\bf Step $2$:} We show $\Tf$ is a contraction under the equivalent norm $\|\cdot\|_{  \Hc^{o,c}}$, for some $c>0$ large enough. Let $(y^i,z^i,n^i,u^i,v^i,m^i) \in   \Hc^o$, $\mathfrak{h}^i=\phi(y^i,z^i,n^i,u^i,v^i,m^i,\partial u^i,\partial v^i,\partial m^i)$ for $i\in \{1,2\}$. We first note that by \Cref{Lemma:intdiagpartialU}
\begin{align*}
\Uc^{i}_t=\eta(T)+\int_t^T\big(g_r(r,\Uc^{i}_r,\Vc^{i}_r,y^i_r,z^i_r)-\partial \Uc^{i}_r\big)\d r-\int_t^T \Vc^{i}_r\d X_r -\int_t^T \d\widetilde \Mc^{i}_r, \; t\in [0,T],\; \P\as,
\end{align*}
where $(\Uc^i,\Vc^i,\widetilde\Mc^i, \partial \Uc^i):=\big( (U_t^{it})_{t\in [0,T]},(V_{t}^{it})_{t\in [0,T]},{\color{black} (\widetilde M_t^{i})_{t\in [0,T]}},(\partial U_t^{it})_{t\in [0,T]}\big)$ {\color{black} and $\widetilde M^i$ as in \Cref{Lemma:intdiagpartialU}} for $i\in \{1,2\}$.\medskip

To ease the readability we define
\begin{align*}
 \delta h_r& := h_r(y_r^1,z_r^1,\Uc_r^{1},\Vc_r^{1},\partial \Uc_r^{1})-h_r(y_r^2,  z_r^2,\Uc_r^{2},\Vc_r^{2},\partial \Uc_r^{2 }),\\
 \delta \hat g_r & := g_r(r,\Uc_r^{1},  \Vc_r^{1},y_r^1,  z_r^1)-\partial \Uc_r^{1}-  g_r(r,\Uc_r^{2}, \Vc_r^{2} ,y_r^2,  z_r^2)+\partial \Uc_r^{2},\\
 \delta g_r(s) & := g_r(s,U_r^{1s},  V_r^{1s},y_r^1,  z_r^1)- g_r(s,U_r^{2s}, V_r^{2s} ,y_r^2,  z_r^2),\\
 \delta  \nabla g_r(s)& :=  \nabla g_r(s,\partial {U_r^s}^{1} ,\partial {V_r^s}^{1},{U_r^s}^1,  {V_r^s}^1,{y_r^s}^1,  z_r^1)-  \nabla g_r(s,\partial {U_r^s}^{2} ,\partial {V_r^s}^{2},{U_r^s}^2, {V_r^s}^2,{y_r^s}^2,  z_r^2),
\end{align*}
and
\[ \delta \Yf:=( \delta \Yc,\delta \Uc,\delta U^s,\delta \partial U^s) , \; \delta \mathfrak{Z}:=( \delta \Zc ,\delta \Vc ,\delta V^s,\delta \partial V^s) ,\; \delta \mathfrak{N}:=( \delta \Nc , {\color{black}\delta \widetilde\Mc},\delta M^s,\delta \partial M^s),\]
whose elements we may denote with superscripts, e.g. $\delta \Yf^1,$ $\delta \Yf^2,$ $\delta \Yf^3,$ $\delta \Yf^4$ correspond to $\delta \Yc,$ $\delta \Uc,$ $\delta U^s,$ $\delta\partial U^s$.
\begin{enumerate}[label=$(\roman*)$, ref=.$(\roman*)$,wide, labelwidth=!, labelindent=0pt]
\item In light of \eqref{Eq:constraintspace}, as in {\bf Step $1$} in \Cref{Prop:aprioriestimates}, we may find that for $c> 4L_g$ there exists a universal constant $C\in (0,\infty)$ such that for $t\in [0,T]$
\begin{align}\label{Eq:ineqUVtt2wpcont}
\begin{split}
  \E\bigg[\int_t^T \e^{cr}\big( |\delta \Uc_r|^2  +  |\delta \partial \Uc_r|^2 +   |\sigma^\t_r \delta \Vc_r|^2\big) \d r\bigg] \leq  C   \E\bigg[   \int_t^T \e^{cr} \big( |\delta y_r|^2 + |\sigma^\t_r \delta z_r|^2\big)\d r\bigg].
\end{split}
\end{align}

\item Applying It\=o's formula to $\e^{cr}\big(|\delta \Yc_r|^2+|\delta \Uc_r|^2+|\delta U_r^s|^2+|\delta \partial U_r^s|^2\big)$ and noticing that $(\delta \Yc_T,  \delta \Uc_T,\delta U_T,\delta \partial U_T)=(0,0,0,0)$ we obtain
\begin{align*}
&  \sum_{ i=1}^4  \e^{ct} \big|\delta \mathfrak{Y}^i_t\big|^2 +   \int_t^T \e^{cr}  \big| \sigma_r^\t \delta \mathfrak{Z}^i_r\big|^2  \d r+\int_t^T \e^{cr-}   \d \big[\delta \mathfrak{N}^i \big]_r + \widetilde \Mf_T^s-\widetilde \Mf_t^s\\
& =\int_t^T \e^{cr}\Big(2 \delta Y_r\cdot  \delta h_r + 2\delta U_r^s\cdot   \delta  g_r(s)+ 2\delta \partial U_r^s\cdot   \delta   \nabla g_r(s)  - c\big( |\delta Y_r|^2+ |\delta U_r^s|^2+ |\delta \partial U_r^s|^2\big) \Big)\d r,
\end{align*}

where $ \widetilde \Mf_t^s= 2 \sum_{i=1}^4 \int_0^t\e^{cr} \delta \mathfrak{Y}^i_r\cdot \delta {\mathfrak{Z}^i_r}^\t \d X_r+\int_0^t \e^{cr-} \delta \mathfrak{Y}^i_{r-} \cdot \d\,  \delta \mathfrak{N}^i_r .$ Again, the fact that $(\delta \Yc,\delta \Uc,\delta U,\delta \partial U) \in (\S^{2})^2\times (\S^{2,2})^2$ guarantees, via the Burkholder--Davis--Gundy inequality, that $\widetilde \Mf^s$ is a uniformly integrable martingale, and thus a true martingale for all $s\in [0,T]$.\medskip

Additionally, under \Cref{AssumptionA}\ref{AssumptionA:ii} and \ref{AssumptionA}\ref{AssumptionA:iii}, $\d t \otimes\d \P\ae$
\begin{align*}\begin{split}
|\delta h_r | & \leq L_h (|\delta y_r|+| \sigma_t^\t \delta z_r| + |\delta \Uc_r|+|\sigma_r^\t \delta \Vc_r|+ |\delta \partial \Uc_r^r|),\\
|\delta \hat{g}_r | & \leq L_g (|\delta y_r|+| \sigma_t^\t \delta z_r| + |\delta \Uc_r|+|\sigma_r^\t \delta \Vc_r|)+ |\delta \partial \Uc_r^r|,\\
|  \delta g_r(s)| & \leq L_{  g} \big(|\delta U_r^s|+| \sigma_r^\t \delta V^s_r|+|\delta y_r|+ |\sigma^\t_r \delta z_r|\big),\\
|  \delta  \nabla  g_r(s)| & \leq L_{ \partial_s  g} \big(|\delta  U_r^s|+| \sigma_r^\t \delta   V^s_r|+|\delta y_r|+ |\sigma^\t_r \delta z_r|\big)+L_g( |\delta \partial U_r^s| + |\sigma_r^\t \delta \partial V_r^s|).\end{split}
\end{align*}
In turn, this implies together with Young's inequality and \eqref{Eq:ineqUVtt2wpcont}, that for any $c>4 L_g$ there exists a universal constant $C>0$ such that for any $\eps>0$
\begin{align*}
&\ \E\bigg[ \sum_{ i=1}^4  \e^{ct} \big|\delta \mathfrak{Y}^i_t\big|^2 +   \int_t^T \e^{cr}  \big| \sigma_r^\t \delta \mathfrak{Z}^i_r\big|^2  \d r+\int_t^T \e^{cr-}   \d \big[ \delta \mathfrak{N}^i\big]_r	 \bigg]\\
&\  \leq \E\bigg[ \int_t^T \e^{cr}\Big (  \big (|\delta \Yc_r|^2+|\delta \Uc_r|^2+|\delta  U_r^s|^2+|\delta \partial U_r^s|^2\big) (C\eps^{-1} - c) + \eps\big(|\delta y_r|^2+ |\sigma^\t_r \delta z_r|^2+  |\delta \Uc_r|+  |\sigma_r^\t \delta \Vc_r|+  |\delta \partial \Uc_r| \big)\Big) \d r  \\
&\  \leq \E\bigg[ \int_t^T \e^{cr} \big(|\delta \Yc_r|^2+|\delta \Uc_r|^2\big) (C\eps^{-1} - c) \d r\bigg] +\sup_{s\in [0,T]} \E\bigg[ \int_t^T \e^{cr} |\delta  U_r^s|^2(C\eps^{-1} - c)\d r\bigg] \\
&\ \hspace{1em}+ \sup_{s\in [0,T]} \E\bigg[ \int_t^T \e^{cr} |\delta \partial U_r^s|^2(C\eps^{-1} - c) \d r\bigg]  +   \eps C \E\bigg[ \int_0^T \e^{cr}  \big(|\delta y_r|^2+ |\sigma^\t_r \delta z_r|^2  \big) \d r \bigg] ,
\end{align*}
where in the second inequality $C$ is appropriately updated. Choosing $\eps=Cc^{-1}$ we obtain
\begin{align*}
\ \E\bigg[ \sum_{ i=1}^4  \e^{ct} \big|\delta \mathfrak{Y}^i_t\big|^2 +   \int_t^T \e^{cr}  \big| \sigma_r^\t \delta \mathfrak{Z}^i_r\big|^2  \d r+\int_t^T \e^{cr-}   \d \big[ \delta \mathfrak{N}^i\big]_r \bigg]  \leq \frac{C}{c} \Big(\|\delta y\|_{\L^{2,c}} +\|\delta z\|_{\H^{2,c}} \Big).
\end{align*}
Which yields
\[ \| \delta \mathfrak{h}\|_{  \Hc^{o,c}}^2\leq \frac{C}{c} \Big(\|\delta y\|_{\L^{2,c}} +\|\delta z\|_{\H^{2,c}}\Big).\]
\end{enumerate}
We conclude $\Tf$ has a fixed point as it is a contraction for $c$ large enough.\end{proof}
\begin{remark}\label{Remark:contractionsimplify}
We remark that in the previous proof we never used the fact that $v\in \Ho$. This is, knowing that for the input $v$ in the contraction its diagonal process $(v_t^t)_{t\in [0,T]}$ is a element of $\H^2$ was not required.
\end{remark}

{\small 
\bibliography{BibliographyCamilo}}

\end{document}